\newcommand{\g}{\mathfrak{g}}
\newcommand{\ft}{\mathfrak{t}}
\newcommand{\td}{\mathfrak{t}^{*}}
\newcommand{\C}{\mathbb{C}}
\newcommand{\CP}{\mathbb{CP}}
\newcommand{\R}{\mathbb{R}}
\newcommand{\Q}{\mathbb{Q}}
\newcommand{\Z}{\mathbb{Z}}
\newcommand{\gt}{\widetilde{\gamma}}
\newcommand{\Jt}{\widetilde{J}}
\newcommand{\Oo}{\mathcal{O}}
\newcommand{\Hh}{\mathcal{H}}
\newcommand{\Wh}{\widehat{W}}
\newcommand{\wh}{\widehat{w}}
\newcommand{\Gh}{\widehat{G}}
\newcommand{\Kh}{\widehat{K}}
\newcommand{\Rh}{\widehat{R}}
\newcommand{\Bh}{\widehat{B}}
\newcommand{\ot}{\widetilde{\omega}}
\newcommand{\pb}{\overline{\psi}}
\newcommand{\pt}{\widetilde{\psi}}
\newcommand{\ah}{\widehat{\alpha}}
\newcommand{\Mt}{\widetilde{M}}
\newcommand{\Mh}{\widehat{M}}
\newcommand{\Lt}{\widetilde{\Lambda}}
\newcommand{\lt}{\widetilde{\lambda}}
\newcommand{\lh}{\widehat{\lambda}}
\newcommand{\Lh}{\widehat{\Lambda}}
\newcommand{\vb}{\overline{\varphi}}
\newcommand{\vt}{\widetilde{\varphi}}
\newcommand{\gkme}{E_{\operatorname{GKM}}}
\newcommand{\Id}{{\operatorname{Id}}}
\newcommand{\Sym}{{\operatorname{Sym}}}
\newcommand{\Ad}{{\operatorname{Ad}}}
\newcommand{\id}{{\operatorname{id}}}
\newcommand{\gkmet}{\widetilde{E}_{\operatorname{GKM}}}
\newcommand{\Vt}{\widetilde{V}}
\newcommand{\bh}{\widehat{\beta}}
\newcommand{\w}{\mathsf{w}}
\def\W+{W_{q}^{+}}
\def\W-{W_{q}^{-}}
\def\r{\rightarrow}
\newtheorem{theorem}{Theorem}[section]
\newtheorem{lemma}[theorem]{Lemma}
\newtheorem{proposition}[theorem]{Proposition}
\newtheorem{corollary}[theorem]{Corollary}
\newtheorem{remark}[theorem]{Remark}
\newtheorem{definition}[theorem]{Definition}
\newtheoremstyle{example}
{9pt}{9pt}{}{}{\bfseries}{}{.5em}{}
\theoremstyle{example}
\newtheorem{example}[theorem]{Example}
\newcommand{\labell}[1] {\label{#1}}
\begin{document}

\title[New Techniques for obtaining Schubert-type formulas]{New techniques for obtaining Schubert-type formulas for
Hamiltonian manifolds}

\author{Silvia Sabatini}\address{silvia.sabatini@epfl.ch\\ \'{E}cole Polytechnique F\'{e}d\'{e}rale
 de Lausanne\\ CH-1015 Lausanne}
\author{Susan Tolman}\address{stolman@math.uiuc.edu\\ University of Illinois Urbana-Champaign\\ 1409 W. Green Street\\ Urbana, IL 61801}
\thanks{The second author was partially supported by NSF-DMS Grant \#0707122.}

\begin{abstract}

In \cite{GT}, Goldin and the second author extend some 
ideas from Schubert calculus to the more general setting
of Hamiltonian torus actions on compact symplectic manifolds
with isolated fixed points. (See also \cite{Kn99} and \cite{Kn08}.) 
The main goal of this paper is to build on this work by finding more effective
formulas.

More explicitly, given a generic component  of
the moment map,  they define a {\em canonical class} $\alpha_p$ in
the equivariant cohomology of the manifold $M$ for each fixed point
$p \in M$.  When they exist, canonical classes form a natural basis
of the equivariant cohomology of $M$.
In particular, when $M$ is a
flag variety, these classes are the equivariant Schubert classes.
It is a long standing
problem in combinatorics to find positive integral formulas for the equivariant structure
constants associated to this basis.
Since 
computing the restriction of the canonical classes
to the fixed points 
determines these structure constants,
it is important to find effective formulas for these
restrictions.

In this paper, we introduce new techniques for calculating
the restrictions of a canonical class $\alpha_p$ to a fixed point $q$.
Our formulas are nearly always simpler, in the sense that
they count the contributions over fewer paths.
Moreover, our formula is 
manifestly positive and
integral 
in certain important special cases.

\end{abstract}

\maketitle

\tableofcontents

\section*{Introduction}

In \cite{GT},  
Goldin and the second author
extend some 
ideas from Schubert calculus to the more general setting
of Hamiltonian torus actions on compact symplectic manifolds
with isolated fixed points. 
(Knutson found closely related formulas for the 
 Duistermaat-Heckman measure 
in the algebraic case in \cite{Kn99} and \cite{Kn08}.) 
 Given a generic component  of
the moment map,  they define a canonical class $\alpha_p$ in
the equivariant cohomology of the manifold $M$ for each fixed point
$p \in M$ 
(see Definition~\ref{definitioncc} below).
 When they exist, 
these
canonical classes form a natural basis
of the equivariant cohomology of $M$. 
In particular, when $M$ is a
flag variety, these classes are the equivariant Schubert classes
(see 
\cite{Ku} and Proposition \ref{canonical=Schubert}). It
is a long standing
problem in combinatorics to find positive integral formulas for the equivariant structure
constants associated to this basis.
Since 
computing the restriction of the canonical classes
to the fixed points 
determines these structure constants and hence the (equivariant) cohomology ring
of $M$,
it is important to find effective formulas for these
restrictions.
Building on ideas of V. Guillemin and C. Zara \cite{GZ}, Goldin and
Tolman show that the restriction of a canonical class $\alpha_p$ to
a fixed point $q$ can be calculated by a rational function which
depends only on 
the following information:
the value of the moment map at fixed points, and the restriction
of other canonical classes to points of index exactly two higher.
Moreover, 
the restriction formula in \cite{GT}
is {\em manifestly positive}
whenever the restrictions themselves are all positive,
including when $M$ is a coadjoint orbit.

However, 
the results in \cite{GT}
differ from Schubert calculus in
several important ways. 
For example, 
the individual summands in that 
formula are almost never integral;
essentially, this only holds when $M$ is $\CP^n$.
In contrast, 
in the combinatorics literature, a manifestly positive integral
formula for the restriction of equivariant Schubert classes on $M=G/B$
is already known (see Appendix D.3 in \cite{AJS} and \cite{B}). 
The main goal of this paper is to bridge this gap by giving formulas
which, like the formula in \cite{GT}, 
are valid in the much broader Hamiltonian category,  but which are simpler
in the sense that  they
count the contribution over fewer paths.  
Indeed, we want these contributions to be manifestly positive and
integral whenever possible, and to understand geometrically when
this occurs.
This project was inspired by an early version of \cite{Za}, 
where C. Zara used combinatorial
tools to re-derive 
the integral formula in \cite{AJS} and \cite{B}
for the case of a coadjoint orbit of type $A_n$ 
from the formula in \cite{GT},
by taking
limits as the cohomology class
of the symplectic form varies.

Before giving  more precise statements, let us define a few terms.
Let $T$ be a (compact) torus with Lie algebra $\ft$ and lattice $\ell \subset \ft$,
and $(\cdot,\cdot)$ the natural pairing between $\td$ and $\ft$.
Let $T$  act on a compact symplectic manifold $(M,\omega)$
with moment map $\psi \colon M \to \td$.
 By definition,
$$\iota_{X_\xi} \omega = - d \psi^\xi \quad \mbox{for all } \xi \in \ft,$$
where $X_\xi$ denotes the vector field on $M$ generated by the action
and $\psi^\xi(x)  =  (\psi(x),\xi)$.  
In this case, we say that the triple $(M,\omega,\psi)$ is a
{\bf Hamiltonian $\mathbf{T}$-manifold}.
Now, assume that $M$ has a discrete fixed set and
fix a {\bf generic} $\xi \in \ft$, that is,
assume that $ (\eta, \xi) \neq 0$ for each weight $\eta \in \ell^* \subset \td$
in the isotropy representation of $T$ on $T_p M$ for every fixed point $p$.
Given a fixed point  $p \in M^T$, 
let $\lambda(p)$ be the number of 
 positive weights of the isotropy action on $T_p M$.
Let $\Lambda_p^- \in \Sym(\td)$ be the product of these weights,
where $\Sym(\td)$ denotes the symmetric algebra on $\td$.
(Here, we say that $f \in \Sym(\ft^*)$ is {\bf positive} if $(f, \xi) > 0$.)

\begin{definition}\label{definitioncc}
Let $(M,\omega,\psi)$ be a Hamiltonian $T$-manifold with discrete fixed set, and
let $\varphi  = \psi^\xi$ be a generic component of
the moment map.
A cohomology class $\alpha_p \in H^{2 \lambda(p)}_T(M;A)$
is a
{\bf canonical class} at a fixed point $p$ (with respect to $\varphi$)
if
\begin{enumerate}
\item $\alpha_p(p) = \Lambda_p^-$
\item $\alpha_p(q) = 0$ for all $q \in M^T \smallsetminus \{p\}$ such
that $\lambda(q) \leq \lambda(p)$.
\end{enumerate}
\end{definition}

Canonical classes do not  always exist,
but if they exist then they are unique \cite[Lemma 2.7]{GT}.
Moreover, if there exist canonical classes 
$\alpha_p\in H_T^{2\lambda(p)}(M;A)$ for all $p\in M^T$, 
then by Lemmas~\ref{GT1}  and
\ref{kirwan}
below,
the classes $\{\alpha_p\}_{p\in M^T}$ are a basis for $H_T^*(M;A)$ as a 
module over $H^*(BT;A)$; see also \cite[Proposition 2.3]{GT}.
In this case, our goal will be to compute the restrictions $\alpha_p(q)$
for all $p$ and $q \in M^T$ in terms of paths in the canonical graph.

\begin{definition}\labell{def:cangraph}
Let $(M,\omega,\psi)$ be a Hamiltonian $T$-manifold with discrete fixed set, and
let $\varphi = \psi^\xi$ be a generic component of the moment map.
Assume that 
canonical classes $\alpha_p \in H_T^{2 \lambda(p)}(M;A)$ exist for
all $p \in M^T$.
There is a labelled directed graph $(V,E)$ associated to $(M,\omega,\psi,\varphi)$,
called the {\bf canonical graph}, 
defined as follows.
\begin{itemize}
\item The vertex set $V$ is the fixed set $M^T$;
we label each vertex $p \in V$ by its moment image $\psi(p)\in \td$.
\item
The edge set is
\begin{gather*}
 E = \{(r,r') \in M^T \times M^T \mid \lambda(r') - \lambda(r) =1
\mbox{ and } \alpha_r(r')\neq 0 \}; 
\end{gather*}
we  label each edge $(r,r') \in E$ by $\displaystyle\frac{\alpha_r(r')}{\Lambda_{r'}^-} .$
\end{itemize}
\end{definition}

For example, if $M$ is a GKM space, then a pair of 
distinct fixed points $(p,q)$
is an edge in the canonical graph exactly if
 $\lambda(q) = \lambda(p) + 1$
and they are contained in a $2$-sphere that is 
fixed by a codimension one subgroup of $T$; see \S\ref{secGKM}.
Given any directed graph with vertex set $V$ and edge set 
$E\subset V\times V$, a  {\bf path} of {\bf length $k$}  from $p$ to $q$
is a  $(k+1)$-tuple  
$\gamma = (\gamma_1,\ldots,\gamma_{k+1}) \in V^{k+1}$ so
that $\gamma_1 = p$, $\gamma_{k+1} = q$, and $(\gamma_{i},\gamma_{i+1})
\in E$ for all $1 \leq i \leq k$. For any path $\gamma$, we let $|\gamma|$ denote its length.

We can now give our most general theorem, which is proved in \S\ref{S:mostgeneral}.
It gives
a formula for the restriction of a canonical class $\alpha_p$
to a fixed point $q$.

\begin{theorem}\labell{thm:main}
Let  $(M,\omega,\psi)$ be a Hamiltonian $T$-manifold with discrete fixed set,
and let $\varphi = \psi^\xi$ be a generic component of the moment map.
Assume that 
canonical classes $\alpha_p \in H_T^{2 \lambda(p)}(M;A)$ exist for
all $p \in M^T$.
Given fixed points $p$ and $q$,
let $\Sigma(p,q)$ denote the set of paths from $p$ to $q$ in 
the associated canonical graph $(V,E)$.
Given classes $\w_r \in H^2_T(M;\R)$ for all $r \in M^T$,
$$\alpha_p(q) = \Lambda_q^- \sum_{\gamma \in \Sigma(p,q)}
\prod_{i=1}^{|\gamma|}
\frac{ \w_{\gamma_i} (\gamma_{i+1}) - \w_{\gamma_i}(\gamma_{i})}
{ \w_{\gamma_i} (q) - \w_{\gamma_i}(\gamma_{i})}
\frac{\alpha_{\gamma_i}(\gamma_{i+1})}{\Lambda_{\gamma_{i+1}}^-}$$
whenever the right hand side is well-defined, i.e.,
$ \w_{\gamma_i}(q) \neq \w_{\gamma_i}(\gamma_i)$ for all
$\gamma \in \Sigma(p,q)$ and $1 \leq i \leq |\gamma|$.
\end{theorem}

This generalizes
the formula in
\cite{GT}  
whenever $H^2(M;\R) \neq \R$;
cf. Remark~\ref{rmk:symplectic}.
This case includes, for example, 
any generic coadjoint orbit of
dimension greater than two.
At first glance, the formula above doesn't look simpler than the one
in \cite{GT} -- they both involve sums over the same set of paths.
However, a path $\gamma \in \Sigma(p,q)$ contributes $0$ to the formula if 
$\w_{\gamma_i}(\gamma_i) = \w_{\gamma_i}(\gamma_{i+1})$ for some $1 \leq i \leq |\gamma| - 1$.
Most of our paper (the proof of Theorem~\ref{thm:main} itself takes less than a page)
is dedicated to explaining how to choose the cohomology classes $\w_r$ so that 
only a few paths contribute, and proving that in these cases the formula is
manifestly positive whenever the restrictions themselves are all 
positive (see Remark \ref{positive terms}).

In \S\ref{S:HC} we show 
that we can reduce the number of paths
whenever there exists a cohomology class $\w$ 
whose restriction to $H^2(M;\R)$ is
in the closure of
the Hamiltonian cone (see Definition~\ref{hamcone})
and has 
 the property that
$\w(p) = \w(q) \neq \w(r)$ for some edges $(p,q)$ and $(q,r)$ in the canonical graph.
For example,  if $M$ is a GKM manifold which admits an invariant K\"ahler structure,
then it is enough to
find  a cohomology class  
in the closure of the K\"ahler cone that 
vanishes on the two-sphere that corresponds to the edge $(p,q)$, but not on the two-sphere
that corresponds to $(q,r)$.

In \S\ref{bundles}, we show that our technique is particularly powerful when the 
manifold is a ``strong symplectic fibration'' over 
another Hamiltonian manifold; this class includes, for example,
equivariant fiber bundles 
with the property 
that the
projection map intertwines  
compatible invariant complex structures.
Explicit
computations are especially easy in this case.
In particular, in Corollary~\ref{corollary formula} we give
an  inductive formula 
for the restrictions $\alpha_p(q)$
in terms of the
paths in the base and the canonical classes on the fiber.
Finally, by  Theorem~\ref{tower symp},
our formula is {\em integral}
whenever
$M$ is a ``tower'' of complex projective spaces,
that is, a fiber bundle over 
$\CP^{n}$ whose
fiber is also a tower of complex projective spaces. 
More generally, if the fibers $F_j$ are not projective spaces,
but do satisfy $H^*(F_j;A) \simeq H^*\big(\CP^{n_j};A\big)$ for some subring
$A \subset \R$, then the contributions are all polynomials in 
the weights with coefficients in  $A$.

Since coadjoint orbits of type $A_n$ and $C_n$ are both towers of complex
projective spaces,
we immediately get manifestly positive integral formulas
for the restrictions in these cases.
Similarly,  since coadjoint orbits of type $B_n$ 
are towers whose fibers  satisfy $H^*\big(F_j;\Z\big[\frac{1}{2}\big]\big) 
\simeq H^*\big(\CP^{n_j};\Z\big[\frac{1}{2}\big]\big)$, the contribution of
each path is integral when multiplied by a sufficiently large power of $2$.
(In a more recent version of \cite{Za}, Zara also independently obtained
 formulas for $C_n$ and $B_n$ of this type as well.)
Finally, coadjoint orbits of type $B_n$ and $D_n$ are sufficiently close  to
being towers of complex projective spaces that
we can manipulate
the terms to get manifestly positive integral formulas in these
cases as well.

\subsection*{Acknowledgements}
The authors would particularly like to thank Victor Guillemin; without his support and encouragement this project would not have been possible. 
They would also like to thank Catalin Zara for his mathematical insights, and
Rebecca Goldin for her support.
Finally, we would like to thank the anonymous referee, who made numerous suggestions
that improved the exposition.

\section{Canonical classes}
\labell{s:main}

The main goal of this section is to 
review the properties of canonical classes.
However,  we also need to prove 
a slight variation of these results: 
Lemma~\ref{lambda vanish}.

Let's begin by recalling a definition.
Let $A \subset \R$ be a subring (with unit).
The {\bf equivariant cohomology of $M$ with coefficients in $A$} is
$$H_T^*(M;A) = H^*(M \times_T ET;A);$$
it is 
a module over $H^*(BT;A)$.
Moreover, the inclusion 
$M \to M \times_T ET$ induces a natural restriction
map $H_T^*(M;A) \to H^*(M;A)$. 
Here, $ET$ is a contractible space on which $T$ acts freely, and  $BT = ET/T$.

Let $(M,\omega,\psi)$ be a Hamiltonian $T$-manifold with a discrete fixed
set.
Given  a generic $\xi \in \ft$,
the function
$\varphi = \psi^\xi \colon M \to \R$ is an invariant 
Morse function;
the critical set of $\varphi$ is exactly
the fixed set $M^T$.
Our convention for the moment map implies that,
for each $p \in M^T$, the weights in the negative normal bundle $\nu^-(p)$ of 
$\varphi$ at $p$
are
exactly the positive weights of the isotropy action on $T_p M$,
that is, the weights $\eta$ such that $(\eta, \xi)  > 0$.
Hence, the index of $\varphi$ at $p$ is $2 \lambda(p)$, where $\lambda(p)$
is the number of such weights.
In particular $H^1(M;\R)=0$.
Finally, given $\alpha \in H_T^*(M;A)$ and $q \in M^T$,
let $\alpha(q)$ denote the image of $\alpha$ under the
natural restriction map $H_T^*(M;A) \to H_T^*(\{q\};A)$.

Throughout this paper, we will frequently need the following lemma,
which is identical to \cite[Lemma 2.8]{GT} except that here we consider coefficients
in any subring $A \subset \R$ instead of just $\Z$. The proof goes
through without any change.

\begin{lemma}
\labell{GT1}
Let $(M,\omega,\psi)$ be a Hamiltonian $T$-manifold with discrete fixed set, and
let $\varphi = \psi^\xi$ be a generic component of
the moment map.
Given
a canonical class 
$\alpha_p \in H_T^{2 \lambda(p)}(M;A)$
at $p \in M^T$,
$$\alpha_p(q) = 0 \quad\mbox{for all }q \in M^T\smallsetminus \{p\} \mbox{ such that }
\varphi(q) \leq \varphi(p).$$
\end{lemma}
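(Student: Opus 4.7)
My plan is to argue by contradiction, combining a minimality choice in $\varphi$ with a standard Morse-theoretic divisibility criterion for equivariant cohomology.

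\textbf{Setup.} Suppose, for contradiction, that the set
$S = \{ q \in M^T \smallsetminus \{p\} \mid \varphi(q) \leq \varphi(p) \text{ and } \alpha_p(q) \neq 0 \}$
is nonempty, and choose $q_0 \in S$ minimizing $\varphi(q_0)$. Let $c = \varphi(q_0)$. Because $\alpha_p(p) = \Lambda_p^- \neq 0$ we have $\varphi(p) \geq c$; hence every $r \in M^T$ with $\varphi(r) < c$ satisfies $r \neq p$ and, by the minimality of $\varphi(q_0)$, also $\alpha_p(r) = 0$.

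\textbf{The key divisibility statement.} The heart of the proof is the following: if $\beta \in H_T^*(M;A)$ restricts to $0$ at every $r \in M^T$ with $\varphi(r) < c$, then for each $q \in M^T$ with $\varphi(q) = c$ the polynomial $\Lambda_q^-$ divides $\beta(q)$ in $\Sym(\td) \otimes A$. I would prove this from the long exact sequence of the pair $(M, M^{<c})$, where $M^{<c} = \{\varphi < c\}$. Equivariant formality of $M^{<c}$ (its equivariant Morse decomposition involves only even-dimensional cells) makes restriction to the fixed set injective, so $\beta|_{M^{<c}} = 0$, and $\beta$ lifts to some $\tilde\beta \in H_T^*(M, M^{<c})$. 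Excision plus the equivariant Thom isomorphism identify $H_T^*(M^{\leq c+\epsilon}, M^{<c})$ with $\bigoplus_{r \,:\, \varphi(r) = c} (\Sym(\td) \otimes A)\cdot \tau_r$, where $\tau_r$ is the Thom class of the negative normal bundle $\nu^-(r)$. Since $\tau_r$ restricts at $r$ to the equivariant Euler class $\Lambda_r^-$, any such lift forces $\beta(q) \in \Lambda_q^- \cdot (\Sym(\td) \otimes A)$.

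\textbf{Concluding by degree comparison.} Applying this criterion to $\beta = \alpha_p$ yields $\Lambda_{q_0}^- \mid \alpha_p(q_0)$ in the integral domain $\Sym(\td) \otimes A$. Now $\deg \Lambda_{q_0}^- = \lambda(q_0)$ while $\deg \alpha_p(q_0) = \lambda(p)$. If $\lambda(q_0) > \lambda(p)$, divisibility forces $\alpha_p(q_0) = 0$, contradicting $q_0 \in S$. If $\lambda(q_0) \leq \lambda(p)$, condition (2) in the definition of a canonical class gives the same conclusion. Either way $S$ must be empty, proving the lemma.

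\textbf{The main obstacle.} The only genuine work is in the divisibility criterion of the second step; once that is in place the rest is short bookkeeping plus a degree count. This criterion is a standard consequence of the equivariant Morse theory of Hamiltonian $T$-manifolds and the equivariant Thom isomorphism, and goes through verbatim with coefficients in any subring $A \subset \R$, which is precisely why, as the authors note, the proof of [GT, Lemma 2.8] carries over unchanged.
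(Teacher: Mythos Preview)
Your proof is correct and follows exactly the standard approach: pick a minimal counterexample, use the equivariant Morse/Thom divisibility criterion (this is precisely the first bullet of Corollary~\ref{GT2} in the paper), and finish by comparing degrees against condition~(2) in the definition of canonical class. The paper itself gives no independent argument here---it simply cites \cite[Lemma~2.8]{GT} and observes that the proof goes through verbatim with coefficients in $A$---and your sketch is essentially that proof.
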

Lemma \ref{GT1} implies that $\varphi(r) < \varphi(r')$ for all $(r,r') \in E$.
Hence, if $\gamma = (\gamma_1,\dots,\gamma_{|\gamma|+1})$ is a path from
$p$ to $q$ in $(V,E)$, then $\varphi(\gamma_i) < \varphi(q)$ for
all $1 \leq i \leq |\gamma|$.  

The following result is due to 
Kirwan \cite{Ki}; see also \cite{GT}, \cite{TW}.

\begin{lemma}[Kirwan]
\labell{kirwan}
Let $(M,\omega,\psi)$ be a Hamiltonian $T$-manifold with discrete fixed set, and
let $\varphi = \psi^\xi$ be a generic component of
the moment map.  For every fixed point
$p$ there exists a class
$\gamma_p \in H_T^{2 \lambda(p)}(M;\Z)$ so that
\begin{itemize}
\item [(1)] $\gamma_p(p) = \Lambda_p^-$, and
\item [(2')] $\gamma_p(q) = 0$ for every $q \in M^T \smallsetminus \{p\}$
such that $\varphi(q) \leq \varphi(p)$.
\end{itemize}
Moreover, for any such classes, the $\{\gamma_p\}_{p \in M^T}$
are a basis for $H_T^*(M;\Z)$ as a module over $H^*(BT;\Z)$.
\end{lemma}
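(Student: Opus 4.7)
The plan is to apply equivariant Morse theory in the form developed by Atiyah--Bott and Kirwan to the function $\varphi = \psi^{\xi}$. This function is Morse with critical set $M^{T}$ and index $2\lambda(p)$ at each $p$, and the equivariant Euler class of the negative normal bundle $\nu^{-}(p)$ is precisely $\Lambda_{p}^{-}$; the central input is that the resulting stratification of $M$ by unstable manifolds is \emph{equivariantly perfect} over $\Z$, which in turn follows from the fact that $\Lambda_{p}^{-}$ is a non-zero-divisor in $H^{*}(BT;\Z)$ (the representation $\nu^{-}(p)$ is symplectic with no fixed sub-bundle).

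First, enumerate the fixed points so that $\varphi(p_{1}) \le \cdots \le \varphi(p_{N})$ and set $M_{i} = \varphi^{-1}(-\infty, c_{i}]$ for regular values $c_{i}$ chosen just above $\varphi(p_{i})$. Equivariant perfectness yields short exact sequences
$$0 \longrightarrow H_{T}^{*}(M_{i}, M_{i-1}; \Z) \longrightarrow H_{T}^{*}(M_{i}; \Z) \longrightarrow H_{T}^{*}(M_{i-1}; \Z) \longrightarrow 0,$$
while the equivariant Thom isomorphism identifies $H_{T}^{*}(M_{i}, M_{i-1}; \Z)$ with $H^{*-2\lambda(p_{i})}(BT;\Z)$ through the Thom class $\tau_{i}$ of $\nu^{-}(p_{i})$. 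Push $\tau_{i}$ to $H_{T}^{2\lambda(p_{i})}(M_{i};\Z)$ and then extend it successively via the splittings above all the way to a class $\gamma_{p_{i}} \in H_{T}^{2\lambda(p_{i})}(M;\Z)$. Property (1) follows from the self-intersection formula (the restriction of a Thom class to its zero section equals the Euler class), and property (2') holds because $\gamma_{p_{i}}$ lifts from the relative group $H_{T}^{*}(M_{i}, M_{i-1}; \Z)$, so its restriction to any $q \in M_{i-1}^{T}$ vanishes.

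Second, for the basis statement, iterating the splittings gives
$$H_{T}^{*}(M; \Z) \;\cong\; \bigoplus_{p \in M^{T}} H^{*-2\lambda(p)}(BT; \Z)$$
as $H^{*}(BT;\Z)$-modules, and the $\gamma_{p_{i}}$ just constructed realize an explicit basis of this decomposition. If $\{\gamma_{p}\}_{p \in M^{T}}$ is any family satisfying (1) and (2'), ordering by $\varphi$ makes the restriction matrix $\bigl(\gamma_{p}(q)\bigr)$ lower triangular over $H^{*}(BT;\Z)$ with diagonal entries $\Lambda_{p}^{-}$; since the $\Lambda_{p}^{-}$ are non-zero-divisors, this expresses $\{\gamma_{p}\}$ as a unipotent change of basis from the constructed one, so it is itself a basis.

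The main obstacle is fixed points that happen to share a common critical value of $\varphi$: the strictly ordered filtration does not separate them, and one cannot literally proceed one fixed point at a time. This is handled either by perturbing $\xi$ within the open cone of generic directions to one for which the $\varphi$-values are distinct — and then verifying that vanishing on points that were originally at equal levels is preserved — or, more intrinsically, by processing all fixed points at a single critical level simultaneously: their unstable manifolds are disjoint, so the equivariant Thom construction applies stratum-by-stratum rather than point-by-point, yielding the required $\gamma_{p}$ without change to the rest of the argument.
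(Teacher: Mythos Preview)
The paper does not supply its own proof of this lemma: it is stated as a result due to Kirwan \cite{Ki} (with references also to \cite{GT} and \cite{TW}) and quoted without argument. Your proposal is a correct and complete sketch of the standard Atiyah--Bott--Kirwan argument that underlies the cited result, so there is nothing to compare against in the paper itself; your write-up would serve well as the missing proof.
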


This has the following corollary, which we
have adapted from \cite[Corollary 2.6]{GT} and \cite[Corollary 2.3]{T}.

\begin{corollary}
\labell{GT2}
Let $(M,\omega,\psi)$ be a Hamiltonian $T$-manifold with discrete fixed set, and
let $\varphi = \psi^\xi$ be a generic component of
the moment map.
Fix $p \in M^T$ and  $\beta \in H^{2i}_T(M;A)$
such that  $\beta(q) = 0$ for all $q \in M^T$ 
satisfying $\varphi(q) < \varphi(p)$.
\begin{itemize}
\item
$\beta(p) = x \Lambda_p^-$ for some $x \in H^{2i - 2 \lambda(p)}
(BT;A)$; in particular,
if $\lambda(p) > i$ then $\beta(p) = 0$.
\item Fix cohomology classes
$\{\gamma_q\}_{q \in M^T}$ so that $\gamma_q$ satisfies conditions
(1) and (2') above for each $q \in M^T$.
Then
$$\beta = \sum_{\varphi(q) \geq \varphi(p)} x_q \gamma_q, \quad \mbox{where}  \
x_q \in H^{2i - 2 \lambda(q)}(BT;A) \mbox{ for all }  q.$$
Here, the sum is over all $q \in M^T$ such that $\varphi(q) \geq \varphi(p)$.
\end{itemize}
\end{corollary}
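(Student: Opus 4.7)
The plan is to prove the second bullet first and then deduce the first bullet by restricting to $p$. The engine is a triangular elimination driven by properties (1) and (2') of the Kirwan classes $\gamma_q$.

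By Lemma~\ref{kirwan} the $\{\gamma_q\}_{q\in M^T}$ form a basis of $H_T^*(M;\Z)$ over $H^*(BT;\Z)$. Since the generic component $\varphi$ is a Morse function with only even critical indices, $H_T^*(M;\Z)$ is free, in particular torsion-free, over $H^*(BT;\Z)$; universal coefficients therefore gives $H_T^*(M;A)\cong H_T^*(M;\Z)\otimes_\Z A$, and the $\gamma_q$ remain a basis over $H^*(BT;A)$. I expand $\beta = \sum_{q\in M^T} x_q\gamma_q$ with $x_q \in H^{2i-2\lambda(q)}(BT;A)$, where the degree of each $x_q$ is forced by the grading.

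To prove the second bullet, I induct on $\varphi(q)$ (on the finite ordered set $\varphi(M^T)$) to show $x_q=0$ whenever $\varphi(q)<\varphi(p)$. Restrict the expansion of $\beta$ to such a $q$: the hypothesis on $\beta$ gives $\beta(q)=0$; the induction hypothesis makes $x_r=0$ for every $r$ with $\varphi(r)<\varphi(q)$; and property (2') of $\gamma_r$ forces $\gamma_r(q)=0$ for every $r\neq q$ with $\varphi(r)\geq\varphi(q)$. The only surviving term is $x_q\gamma_q(q)=x_q\Lambda_q^-$. Since $H^*(BT;A)$ is a polynomial ring, hence an integral domain, and $\Lambda_q^-\neq 0$, we conclude $x_q=0$.

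For the first bullet, restrict the resulting expansion $\beta = \sum_{\varphi(q)\geq\varphi(p)} x_q\gamma_q$ to $p$. For each $q\neq p$ with $\varphi(q)\geq\varphi(p)$, property (2') of $\gamma_q$ yields $\gamma_q(p)=0$, leaving $\beta(p) = x_p\Lambda_p^-$ with $x:=x_p\in H^{2i-2\lambda(p)}(BT;A)$. If $\lambda(p)>i$ the ambient degree $2i-2\lambda(p)$ is negative, forcing $x=0$ and hence $\beta(p)=0$. The only real subtlety beyond this standard triangular elimination is the coefficient change from $\Z$ to $A$ needed to apply Lemma~\ref{kirwan}, which is handled by the torsion-freeness noted above; I expect no other obstacle, since the argument follows the pattern of \cite[Corollary~2.6]{GT} and \cite[Corollary~2.3]{T} verbatim.
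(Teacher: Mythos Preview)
Your proof is correct and is precisely the standard triangular elimination argument; the paper does not actually supply a proof of this corollary but merely cites \cite[Corollary~2.6]{GT} and \cite[Corollary~2.3]{T}, whose arguments you have reproduced faithfully. The one point worth noting is your coefficient change from $\Z$ to $A$, which is indeed justified since $H_T^*(M;\Z)$ is free over $\Z$ (being free over the polynomial ring $H^*(BT;\Z)$), so universal coefficients applies and the Kirwan basis persists over $A$.
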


We also need the following closely related fact.

\begin{lemma}\labell{lambda vanish}
Let $(M,\omega,\psi)$ be a Hamiltonian $T$-manifold with discrete fixed set, and
let $\varphi = \psi^\xi$ be a generic component of the moment map.
Assume that 
canonical classes $\alpha_p \in H_T^{2 \lambda(p)}(M;A)$ exist for
all $p \in M^T$.
Fix $p\in M^T$ and $\beta\in H_T^{2i}(M;A)$ such that $\beta(q)=0$ for all $q\in M^T$ so that
$\lambda(q)< \lambda(p)$. Then $$
\beta=\sum_{\lambda(q)\geq\lambda(p)}x_q\alpha_q,\quad \mbox{where } x_q\in 
H^{2i-2\lambda(q)}(BT;A)\mbox{ for all } q.
$$
Here the sum is over all $q\in M^T$ such that $\lambda(q)\geq \lambda(p)$.
\end{lemma}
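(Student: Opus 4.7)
The plan is to expand $\beta$ in the canonical basis and then peel off coefficients from the bottom up, using the defining vanishing property of canonical classes to read off each coefficient by restriction to a single fixed point.

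First I would invoke the fact stated in the excerpt (from Lemmas~\ref{GT1} and \ref{kirwan}) that the canonical classes $\{\alpha_q\}_{q\in M^T}$ form a basis of $H_T^*(M;A)$ as a module over $H^*(BT;A)$. This gives a unique expansion
\[
\beta = \sum_{q\in M^T} x_q \alpha_q, \qquad x_q \in H^*(BT;A).
\]
By comparing degrees (and using $\alpha_q\in H_T^{2\lambda(q)}(M;A)$), we automatically get $x_q \in H^{2i-2\lambda(q)}(BT;A)$, with the convention $x_q=0$ whenever $2i-2\lambda(q)<0$. So it suffices to show that $x_q=0$ for every $q\in M^T$ with $\lambda(q) < \lambda(p)$.

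The core step is a minimality argument. Suppose for contradiction that the set $S=\{q\in M^T \mid \lambda(q)<\lambda(p),\ x_q\neq 0\}$ is nonempty, and pick $q_0\in S$ with $\lambda(q_0)$ minimal. Restricting the identity $\beta = \sum_q x_q \alpha_q$ to $q_0$ and using the hypothesis $\beta(q_0)=0$, I get
\[
0 \;=\; \beta(q_0) \;=\; \sum_{q\in M^T} x_q\, \alpha_q(q_0).
\]
Split the sum according to how $\lambda(q)$ compares to $\lambda(q_0)$. For $q\ne q_0$ with $\lambda(q)\ge \lambda(q_0)$, condition (2) in the definition of canonical class forces $\alpha_q(q_0)=0$. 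For $q$ with $\lambda(q)<\lambda(q_0)$, we have $\lambda(q)<\lambda(p)$ as well, so minimality of $\lambda(q_0)$ in $S$ gives $x_q=0$. Only the $q=q_0$ term survives, and since $\alpha_{q_0}(q_0)=\Lambda_{q_0}^-$, the equation collapses to
\[
0 \;=\; x_{q_0}\,\Lambda_{q_0}^-.
\]

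Finally, since $A\subset\R$ is an integral domain, $H^*(BT;A)\cong A[t_1,\dots,t_n]$ is also an integral domain, and $\Lambda_{q_0}^-$ is a product of nonzero elements of $\td$, hence nonzero in this ring. Therefore $x_{q_0}=0$, contradicting $q_0\in S$. So $S=\emptyset$ and the claimed expansion holds. I expect no real obstacle here beyond being careful about the minimality choice and citing the basis property for canonical classes (rather than just Kirwan classes); the rest is a direct application of the vanishing condition built into the definition of $\alpha_q$.
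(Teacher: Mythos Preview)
Your proof is correct and follows essentially the same approach as the paper: expand $\beta$ in the canonical basis, choose $q_0$ of minimal index among those with nonzero coefficient below $\lambda(p)$, restrict to $q_0$, and derive the contradiction $0=x_{q_0}\Lambda_{q_0}^-$. You are simply more explicit than the paper about why $\Lambda_{q_0}^-$ is a nonzerodivisor in $H^*(BT;A)$.
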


\begin{proof}
Since  $\{\alpha_q\}_{q\in M^T}$ is a basis for $H_T^*(M;A)$ as a module over $H^*(BT;A)$, 
we can write
$
\beta=\sum_{q\in M^T}x_q\alpha_q,$
where $x_q\in H^{2i-2\lambda(q)}(BT;A)$ for all $q$.
If the claim doesn't hold,
then there exists $q \in M^T$ so that $\lambda(q) < \lambda(p)$ and $x_q \neq 0$,
but $x_r = 0$ for all $r$ such that $\lambda(r) < \lambda(q)$.
Hence by the definition of canonical class $\beta(q) = x_q \Lambda_q^-$. Since $\beta(q)=0$
this is impossible.
\end{proof}

\section{GKM spaces}\labell{secGKM}

We now restrict our attention to an important special case where 
it is especially easy to calculate canonical classes.
A Hamiltonian $T$-manifold $(M,\omega,\psi)$  
is a 
\textbf{GKM (Goresky-Kottwitz-MacPherson) space}
if $M$ has 
isolated 
fixed points and if, 
for every codimension one subgroup $K\subset T$,  the
fixed submanifold $M^K$ has dimension at most two. 
Equivalently, $M$ is a GKM space if 
the weights of the isotropy representation of $T$ on $T_pM$ are pairwise linearly independent
for every fixed point $p \in M^T$.
\begin{definition}
The \textbf{GKM graph} of a GKM space $(M,\omega,\psi)$ is
the labelled directed  graph $(V,\gkme)$,  defined as follows.
\begin{itemize}
 \item The vertex set $V$ is the fixed set $M^T$; we label each 
$p\in M^T$ by its moment image $\psi(p)\in \td$.
\item Given $p\neq q$ in $V$, there is a directed edge $(p,q)\in \gkme$ exactly if there exists a codimension one subgroup $K\subset T$ so that $p$ and $q$ are contained in the same connected component $N$ of $M^K$. We label each edge $(p,q)$ by the weight $\eta(p,q)$ associated to the isotropy representation of $T$ on $T_qN\simeq \C$.
\end{itemize}
\end{definition}

Observe that $(p,q) \in \gkme$ exactly if $(q,p) \in \gkme$. Moreover,  $\eta(p,q) = - \eta(q,p)$,
and  $\psi(q)-\psi(p)$ is a positive multiple of $\eta(p,q)$ for all $(p,q)\in \gkme$.
Additionally, 
the set of weights 
of the isotropy representation on
the tangent space at any point $p \in V$ is
$$\Pi_p = \Pi_p(M) = \big\{ \eta(r,p) \; \big| \; (r,p) \in \gkme \big\}.$$

 \begin{example}\textit{ The complex projective space $\C P^n$. } 
The natural  action of $(S^1)^{n+1}$ on $\C^{n+1}$ 
descends to an effective Hamiltonian action of $T = (S^1)^{n+1}/S^1$
on $\C P^{n}$. 
The associated GKM graph
is the complete graph on $n+1$ fixed points:
$p_1 = [1,0,\ldots,0],\;p_2 = [0,1,\ldots,0],\ldots,\;p_{n+1} = [0,0,\ldots,1]$. 
Finally, the moment 
image of $p_i$ is
$\frac{1}{n+1}\sum_{j=1}^{n+1} \big(x_j-x_i \big)$, 
and the weight associated to the edge 
$(p_i,p_j)$ is  $x_i-x_j$.
Here, we let $x_1,\dots,x_{n+1}$ be the standard basis for $(\R^{n+1})^*$
and identify $\ft^*$ with  $\big\{ \mu \in (\R^{n+1})^* \big| \sum \mu_i = 0 \big\}$.
\end{example}

Now fix a generic component of the moment map $\varphi=\psi^{\xi}$. 
As we mentioned in the previous section,
the  set of weights  in the isotropy representation on 
the negative normal bundle of $\varphi$
at $p$ is the set of positive weights in 
$\Pi_p(M)$. 
Hence, $\lambda(p)$  is 
the number of edges $(r,p)\in \gkme$ such that $\varphi(r)<\varphi(p)$, and
$$\Lambda_p^- = \prod_{\substack{\eta \in \Pi_p(M) \\ (\eta,\xi) > 0}}\eta.$$

It is possible to strengthen Lemma~\ref{GT1} when
$M$ is a GKM space.
We say that a  path $\gamma = (\gamma_1,\dots,\gamma_{|\gamma|+1}$) in
$(V,\gkme)$ is {\bf ascending} if
$ \varphi(\gamma_{i}) < \varphi(\gamma_{i+1})$ for all $i$.

\begin{lemma}\labell{GT1GKM}
Let $(M,\omega,\psi)$ be a GKM space, and let
$\varphi = \psi^\xi$ be a generic component of the moment 
map.
Given a canonical class  $\alpha_p \in H_T^{2 \lambda(p)}(M;A)$
at $p \in M^T$,
$\alpha_p(q) = 0$ for all $q \in M^T$
such that there are no ascending paths from 
$p$ to $q$ in $(V,\gkme)
.$
\end{lemma}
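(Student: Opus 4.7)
The plan is to prove the vanishing by induction on $\varphi(q)$, using the GKM congruence relations to promote divisibilities by individual weights at $q$ into divisibility by the full product $\Lambda_q^{-}$, and then closing the argument by a degree count. Let $S_p \subset M^T$ denote the set of vertices reachable from $p$ by an ascending path in $(V,\gkme)$, so that $p \in S_p$ via the trivial length-zero path. The goal is then to show $\alpha_p(q) = 0$ for every $q \in M^T \smallsetminus S_p$. If $\varphi(q) \leq \varphi(p)$ then $q \neq p$, and Lemma~\ref{GT1} immediately gives $\alpha_p(q) = 0$, handling the base case of the induction.

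For the inductive step, fix $q \notin S_p$ with $\varphi(q) > \varphi(p)$ and assume the conclusion for every $q' \in M^T$ with $\varphi(q') < \varphi(q)$. The key combinatorial observation is that every GKM-neighbor $r$ of $q$ with $\varphi(r) < \varphi(q)$ must itself fail to lie in $S_p$: if $r \in S_p$, then appending the ascending edge $(r,q)$ to an ascending path $p \to r$ (or, in the degenerate case $r=p$, taking the single edge $(p,q)$) would produce an ascending path from $p$ to $q$, contradicting $q \notin S_p$. Hence either the inductive hypothesis applies to $r$, or $\varphi(r) \leq \varphi(p)$ and Lemma~\ref{GT1} does; in both cases $\alpha_p(r) = 0$. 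Applying the standard GKM congruence $\alpha_p(q) \equiv \alpha_p(r) \pmod{\eta(r,q)}$ along each such edge now yields $\eta(r,q) \mid \alpha_p(q)$ in $H^*(BT;A)$ for every downward GKM-neighbor $r$ of $q$.

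Because $M$ is GKM, the weights $\{\eta(r,q) \mid (r,q) \in \gkme\}$ are pairwise $\R$-linearly independent, hence pairwise coprime in the UFD $H^*(BT;\R)$, into which $H^*(BT;A)$ embeds via $A \hookrightarrow \R$ (using flatness of the free $\Z$-module $H^*(BT;\Z)$). Unique factorization therefore promotes the individual divisibilities above to $\Lambda_q^- \mid \alpha_p(q)$ in $H^*(BT;\R)$. A degree count then finishes the argument: if $\lambda(q) > \lambda(p)$ then $\deg \Lambda_q^- = \lambda(q) > \lambda(p) = \deg \alpha_p(q)$ forces $\alpha_p(q) = 0$, and if $\lambda(q) \leq \lambda(p)$ then $\alpha_p(q) = 0$ is immediate from condition (2) in the definition of a canonical class, since $q \neq p$.

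I expect the main obstacle to be careful bookkeeping in the inductive step, namely verifying that every downward GKM-neighbor of $q$ lies outside $S_p$ so the hypothesis actually applies, and handling the coefficient ring $A$ cleanly when invoking unique factorization. Passing to the embedding $H^*(BT;A) \hookrightarrow H^*(BT;\R)$ and working there sidesteps the latter issue, after which the rest reduces to the standard degree arithmetic above; no new ideas beyond Lemma~\ref{GT1}, the GKM congruence, and the pairwise linear independence of weights are required.
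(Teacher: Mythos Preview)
Your argument is correct and follows essentially the same route as the paper's proof: both use the GKM congruence $\alpha_p(q)\equiv\alpha_p(r)\pmod{\eta(r,q)}$ along downward edges, combine the resulting divisibilities via pairwise linear independence of the weights to get $\Lambda_q^-\mid\alpha_p(q)$, and finish with the degree count against the definition of a canonical class. The only cosmetic difference is that you organize this as an explicit induction on $\varphi(q)$, whereas the paper phrases it as a minimal-counterexample argument (pick $q$ with $\alpha_p(q)\neq 0$ but $\alpha_p(r)=0$ for every downward neighbor $r$, and deduce $q=p$); the underlying logic is identical.
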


\begin{proof}
Consider any $q \in M^T$ so that
$\alpha_p(q) \neq 0$ but
$\alpha_p(r) = 0$ for each edge $(r,q) \in \gkme$ such
that $\varphi(r) < \varphi(q)$.
Then $\alpha_p(q)$ is a non-zero multiple of $\eta(r,q)$ for all $(r,q) \in \gkme$
such that $\varphi(r) < \varphi(q)$.  
(To see this,  recall that for each $(r,q) \in \gkme$ there exists 
a sphere $N\subset M$ containing $r$ and $q$ which is fixed by
the codimension one subgroup associated to $\eta$.
)
Since these weights
are pairwise linearly independent, this implies that $\alpha_p$
has degree at least $2 \lambda(q)$, that is, 
$\lambda(q) \leq \lambda(p)$.
By the definition of canonical class, this is impossible unless
$p = q$.  The claim follows.
\end{proof}

We say that $\varphi$ is 
\textbf{index increasing} if $\lambda(p) < \lambda(q)$ 
for every edge $(p,q)\in \gkme$ such that $\varphi(p) < \varphi(q)$. 
In this case, integral canonical classes exist
and it is straightforward to compute the restriction of a canonical class 
$\alpha _p$ to $q$ for any $p$ and $q$ in $M^T$ such that 
$\lambda(q)-\lambda(p)=1$.
Conversely, if there exist  canonical classes $\alpha_p\in H_T^*(M;\Q)$
for all $p\in M^T$, then $\varphi$ is index increasing
\cite[Remark 4.2]{GT}.

More specifically, let
$\xi^{\circ}=\{\beta \in \td \mid (\beta,\xi) =0\}$.
Given  $\eta\in \td$, 
let $\varrho_{\eta} \colon \Sym(\td)\r\Sym(\td)$ be the homomorphism of 
symmetric algebras induced by the projection map which sends $X\in \td$ to 
$X-\frac{ (X,\xi)}{ (\eta,\xi) }\eta\in \xi^{\circ}\subset 
\td$.
Following \cite{GZ}, 
for any $(p,q)\in \gkme$ 
we define 
$$
\Theta(p,q)=\frac{\varrho_{\eta(p,q)}(\Lambda_p^{-})}{\varrho_{\eta(p,q)}
\left(\frac{\Lambda_q^{-}}{\eta(p,q)}\right)}\in \Sym(\td)_0,
$$
where $\Sym(\td)_0$ denotes the ring of fractions of $\Sym(\td)$. Observe 
that  $\varrho_{\eta(p,q)}
\Big(\frac{\Lambda_q^{-}}{\eta(p,q)}\Big)$ 
is not zero, since by the GKM assumption the weights at each fixed point
 are pairwise linearly independent. 
The theorem below was proved in \cite{GZ} over the rationals and
then extended to the integers in \cite{GT}.

\begin{theorem}\labell{existence canonical classes} 
Let $(M,\omega,\psi)$ be a GKM space, and let $(V,\gkme)$ be the associated GKM graph. Let $\varphi=\psi^{\xi}$ be a generic component of the moment map; assume that $\varphi$ is index increasing. Then 
\begin{enumerate}
\item There exist canonical classes $\alpha_p\in H_T^{2 \lambda(p)}(M;\Z)$ for all $p\in M^T$.
\item  Given fixed points $p$ and $q$ such that $\lambda(q)-\lambda(p)=1$,
\begin{equation*}
\alpha_p(q)=\left\{ \begin{array}{ll}
                    \displaystyle\Lambda_q^{-}\frac{\Theta(p,q)}{\eta(p,q)} & \mbox{  if  }(p,q)\in \gkme, \mbox{  and  }\\
                    & \\
                    0 & \mbox{  if  }(p,q)\notin \gkme\\
                    \end{array}
                    \right.
\end{equation*}
\item $\Theta(p,q)\in \Z\smallsetminus \{0\}$ for all $(p,q)\in \gkme$ such that $\lambda(q)-\lambda(p)=1$.
\end{enumerate}
\end{theorem}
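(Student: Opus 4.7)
The plan is to prove the three parts in sequence: part (1) by inductive modification of Kirwan classes, part (2) by a GKM divisibility argument at $q$, and part (3) as a consequence of parts (1) and (2).

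For part (1), I would proceed by induction on $\lambda(p)$. The base case $\lambda(p)=0$ corresponds to the unique $\varphi$-minimum, where $\alpha_p=1$ works. For the inductive step, start with Kirwan's integer class $\gamma_p \in H_T^{2\lambda(p)}(M;\Z)$ from Lemma~\ref{kirwan}, and enumerate the fixed points $r_1,\dots,r_k$ satisfying $r_i\neq p$, $\lambda(r_i)\leq\lambda(p)$, and $\varphi(r_i)>\varphi(p)$ in order of strictly increasing $\varphi$ (breaking ties arbitrarily). I would iteratively set $\beta^{(i)}=\beta^{(i-1)}+c_i\gamma_{r_i}$ with $\beta^{(0)}=\gamma_p$, choosing $c_i\in H^{2(\lambda(p)-\lambda(r_i))}(BT;\Z)$ so that $\beta^{(i)}(r_i)=0$. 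The existence of such an integer $c_i$ reduces to divisibility $\Lambda_{r_i}^- \mid \beta^{(i-1)}(r_i)$ in $\Sym(\ell^*)$. For each edge $(s,r_i)\in\gkme$ with $\varphi(s)<\varphi(r_i)$, the index-increasing hypothesis gives $\lambda(s)<\lambda(r_i)\leq\lambda(p)$, and $s\neq p$ (otherwise $(p,r_i)\in\gkme$ with $\varphi(p)<\varphi(r_i)$ would force $\lambda(r_i)>\lambda(p)$, contradicting $\lambda(r_i)\leq\lambda(p)$). Hence $\beta^{(i-1)}(s)=0$ by the previous steps, and GKM compatibility yields $\eta(s,r_i)\mid\beta^{(i-1)}(r_i)$. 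Combining these $\lambda(r_i)$ divisibilities using that the weights are pairwise linearly independent by the GKM hypothesis gives $\Lambda_{r_i}^-\mid\beta^{(i-1)}(r_i)$, producing the desired $c_i$. Setting $\alpha_p=\beta^{(k)}$ yields a canonical class.

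For part (2), fix $p$ and $q$ with $\lambda(q)=\lambda(p)+1$. There are exactly $\lambda(p)+1$ edges $(r,q)\in\gkme$ with $\varphi(r)<\varphi(q)$; index-increasing gives $\lambda(r)\leq\lambda(p)$ for each, so the canonical condition forces $\alpha_p(r)=0$ for every such $r\neq p$, and GKM compatibility then yields $\eta(r,q)\mid\alpha_p(q)$. If $(p,q)\notin\gkme$, all $\lambda(p)+1$ edges contribute divisibility of a degree-$\lambda(p)$ polynomial by pairwise linearly independent factors, forcing $\alpha_p(q)=0$. If $(p,q)\in\gkme$, then $\varphi(p)<\varphi(q)$ by index-increasing (otherwise $\lambda(q)<\lambda(p)$), and the remaining $\lambda(p)$ divisibilities together with a degree count yield $\alpha_p(q)=c\cdot\Lambda_q^-/\eta(p,q)$ for some scalar $c$. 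The last GKM condition at $(p,q)$ is $\alpha_p(q)\equiv\Lambda_p^-\pmod{\eta(p,q)}$; applying the ring homomorphism $\rho_{\eta(p,q)}$ (which sends $\eta(p,q)$ to zero) gives $c\cdot\rho_{\eta(p,q)}(\Lambda_q^-/\eta(p,q))=\rho_{\eta(p,q)}(\Lambda_p^-)$, so $c=\Theta(p,q)$. Both factors of $\Theta(p,q)$ are products of $\lambda(p)$ non-zero projected linear forms in $\xi^\circ$, by the GKM pairwise linear independence, so $\Theta(p,q)\neq 0$ in $\Sym(\td)_0$.

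For part (3), the integer canonical class produced in part (1) gives $\alpha_p(q)\in\Sym(\ell^*)$, and the formula from part (2) reads $\alpha_p(q)=\Theta(p,q)\cdot\Lambda_q^-/\eta(p,q)$; since the denominator is a product of $\lambda(p)$ pairwise linearly independent integer linear forms, a UFD-style divisibility argument applied to the integer GKM compatibility conditions on $\alpha_p$ yields $\Theta(p,q)\in\Z$. The main obstacle throughout the proof will be ensuring that these divisibility arguments produce integer, rather than merely rational, quotients: at each stage one must verify that coprimality in the polynomial UFD $\Sym(\ell^*)$ combines individual divisibilities into divisibility by the full product. This is the step where the GKM pairwise linear independence of weights, combined with the full strength of the integer GKM compatibility conditions, is used most delicately, and where the careful inductive ordering on $\varphi$ and $\lambda$ is essential.
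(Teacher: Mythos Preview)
The paper does not prove this theorem; it is quoted from \cite{GZ} (rational coefficients) and \cite{GT} (integer coefficients), so there is no in-paper proof to compare against. Your outline follows the standard route and is correct over $\Q$: the inductive modification of Kirwan classes in part~(1), the GKM edge-divisibility plus degree count in part~(2), and the identification of the scalar via $\rho_{\eta(p,q)}$ are exactly the right moves, and your non-vanishing argument for $\Theta(p,q)$ is fine.

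The one genuine soft spot is the passage to $\Z$, which you correctly flag as ``the main obstacle'' but do not actually resolve. Your assertion that pairwise linear independence of the weights at $r_i$ lets you combine the individual divisibilities $\eta(s,r_i)\mid\beta^{(i-1)}(r_i)$ into $\Lambda_{r_i}^-\mid\beta^{(i-1)}(r_i)$ in $\Sym(\ell^*)\cong\Z[y_1,\dots,y_n]$ is not valid as stated: two linearly independent but non-primitive weights such as $2x_1$ and $2x_2$ (which do occur, e.g.\ at fixed points in type $C_n$) are \emph{not} coprime in $\Z[y_1,\dots,y_n]$, so divisibility by each only yields divisibility by $\operatorname{lcm}(2,2)\,x_1x_2=2x_1x_2$, not by $4x_1x_2$. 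The same issue recurs in your part~(3), where you need $\Lambda_q^-/\eta(p,q)$ to divide $\alpha_p(q)$ over $\Z$; if $\Lambda_q^-/\eta(p,q)$ has content $>1$ your Gauss-lemma style conclusion fails. The integer argument in \cite{GT} avoids combining GKM edge divisibilities and instead relies on the Morse-theoretic statement recorded here as Corollary~\ref{GT2}: divisibility of a restriction by the full Euler class $\Lambda_r^-$ over $\Z$ comes from the Kirwan basis, not from the edge conditions. To invoke that in your scheme you would need the class you are correcting to vanish at \emph{every} fixed point with smaller $\varphi$-value, not just at the GKM neighbours of $r_i$; as your construction stands, $\beta^{(i-1)}$ can be nonzero at points $s$ with $\varphi(p)<\varphi(s)<\varphi(r_i)$ and $\lambda(s)>\lambda(p)$, so Corollary~\ref{GT2} does not apply directly and the induction needs to be reorganised.
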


In particular, the associated canonical graph  has
vertex set $V = M^T$ and edge set
\begin{equation*}\labell{Egkm}
E = \{ (r,r') \in \gkme \mid \lambda(r') - \lambda(r) = 1 \}.
\end{equation*}

\section{The most general theorem}
\labell{S:mostgeneral}

In this section, we will prove our most general theorem, Theorem~\ref{thm:main}.
As we mentioned in the introduction, it
is a generalization of \cite[Theorem 1.2]{GT}.
More precisely, it is more general 
whenever 
$H^2(M;\R) \neq \R$; see Remark~\ref{rmk:symplectic}.
The main advantage of our formula is that it
usually
allows us to express $\alpha_p(q)$ as a sum over fewer paths.
For the reader's convenience, we will recall the statement of Theorem~\ref{thm:main}.

\begin{quote}
{\em
Let  $(M,\omega,\psi)$ be a Hamiltonian $T$-manifold with discrete fixed set,
and let $\varphi = \psi^\xi$ be a generic component of the moment map.
Assume that 
canonical classes $\alpha_p \in H_T^{2 \lambda(p)}(M;A)$ exist for
all $p \in M^T$.
Given fixed points $p$ and $q$,
let $\Sigma(p,q)$ denote the set of paths from $p$ to $q$ in 
the associated canonical graph $(V,E)$.
Given classes $\w_r \in H^2_T(M;\R)$ for all $r \in M^T$,
$$\alpha_p(q) = \Lambda_q^- \sum_{\gamma \in \Sigma(p,q)}
\prod_{i=1}^{|\gamma|}
\frac{ \w_{\gamma_i} (\gamma_{i+1}) - \w_{\gamma_i}(\gamma_{i})}
{ \w_{\gamma_i} (q) - \w_{\gamma_i}(\gamma_{i})}
\frac{\alpha_{\gamma_i}(\gamma_{i+1})}{\Lambda_{\gamma_{i+1}}^-}$$
whenever the right hand side is well-defined, i.e.,
$ \w_{\gamma_i}(q) \neq \w_{\gamma_i}(\gamma_i)$ for all
$\gamma \in \Sigma(p,q)$ and $1 \leq i \leq |\gamma|$.
}
\end{quote}

\begin{remark}\labell{rmk:symplectic} \rm
By Lemma~\ref{GT1},  
$\varphi(\gamma_i) < \varphi(q)$ for all $\gamma \in \Sigma(p,q)$
and $1 \leq i \leq |\gamma|$;
a fortiori, $\psi(\gamma_i) \neq \psi(q)$.
Therefore, the right hand side of the equation above is  well-defined
if $\w_r $ is a non-zero multiple of $[\omega + \psi]$ for all $r \in M^T$.
(Here we
are using the Cartan model for the equivariant cohomology of $M$.)
In this case, 
the theorem agrees with
\cite[Theorem 1.2]{GT}.
\end{remark}

Note that a path $\gamma \in \Sigma(p,q)$ contributes $0$ to the formula above
exactly
if there exists $1\leq i \leq |\gamma|-1$
 such that $\w_{\gamma_i}(\gamma_i) = \w_{\gamma_i}(\gamma_{i+1})$
 but $\w_{\gamma_i}(q) \neq \w_{\gamma_i}(\gamma_i)$.
Generally speaking, the best result will come from choosing each class $\w_r$
so that $\w_r(r) \neq \w_r(q)$, but $\w_r(r) = \w_r(s)$ 
for as  many edges $(r,s) \in E$ as possible.
In practice, instead of trying to pick the optimal class at each fixed point,
we will often fix an 
ordered list of 
classes.  For
each fixed point we will
just pick the first class that satisfies the hypotheses of Theorem \ref{thm:main}.
As we show below, as long as the forms satisfy the technical condition
\eqref{tech}, this technique gives an elegant answer.
In the next two sections, we will explain  natural geometric
conditions which guarantee that \eqref{tech} is satisfied.

\begin{corollary}\labell{height}
Let  $(M,\omega,\psi)$ be a Hamiltonian $T$-manifold with discrete fixed set, and
let $\varphi = \psi^\xi$ be a generic component of the moment map.
Assume that 
canonical classes $\alpha_p \in H_T^{2 \lambda(p)}(M;A)$ exist for
all $p \in M^T$.
Pick 
classes\footnote{
In practice, there often exists a symplectic form $\omega_j \in \Omega^2(M)$
with moment map
$\psi_j \colon M \to \ft^*$ such that $[\omega_j + \psi_j]  = 
\w_j \in H_T^2(M;\R)$ for each $j$.  In this case, $\w_j(p) = \psi_j(p)$ for all $p \in M^T$;
in particular, $\w_j^\xi(p) = \psi_j^\xi(p)$.  However we do not insist that such symplectic forms exist; we allow the general case.}
$\w_1,\w_2,\dots,\w_k$  in $H_T^2(M;\R)$
such that, for each $j$, 
\begin{equation}\labell{tech}
\alpha_p(q) = 0  
\ \ 
\forall \ 
p,q
\in M^T \mbox{ such that }
\w_j(q) \neq \w_j(p) \mbox{ and } \w_j^\xi(q) \leq \w_j^\xi(p)\;,
\end{equation}
where for each $p\in M^T$, $\w_j^\xi(p)$ denotes $(\w_j(p),\xi)$.

Assume that for each $(r,r') \in E$,
there exists  $j \in \{1,\ldots,k \}$  such that 
$\w_j(r) \neq \w_j(r')$, and define  
$$h(r,r') = \min \big\{ j 
\; \big| \; \w_j(r) \neq \w_j(r') \big\}
\quad \mbox{for all  } (r,r') \in E
.$$
Given $p$ and $q$ in $M^T$, let $\Sigma(p,q)$ denote the set of paths
in the associated canonical graph $(V,E)$ from $p$ to $q$. 
Then
\begin{gather*}
\alpha_p(q) = 
\Lambda_q^- 
\sum_{\gamma \in C(p,q)}
\prod_{i=1}^{|\gamma|} \frac
{ \w_{h(\gamma_i,\gamma_{i+1})}(\gamma_{i+1}) - \w_{h(\gamma_i,\gamma_{i+1})}(\gamma_i)}
{ \w_{h(\gamma_i,\gamma_{i+1})}(q) - \w_{h(\gamma_i,\gamma_{i+1})}(\gamma_i)}
\frac{\alpha_{\gamma_i}(\gamma_{i+1})}{\Lambda_{\gamma_{i+1}}^-},
\
\mbox{where}  \\
C(p,q)= \left\{ \left. \gamma \in \Sigma(p,q) \ \right| \   h(\gamma_1,\gamma_2) \leq h(\gamma_2,\gamma_3) \leq \dots
\leq h(\gamma_{|\gamma|},\gamma_{|\gamma|+1})\right\}. 
\end{gather*}
\end{corollary}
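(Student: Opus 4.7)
The plan is to argue by induction on $\lambda(q) - \lambda(p) \geq 0$. The base case amounts to $p = q$ (since every edge in $E$ strictly increases $\lambda$ by one): the unique length-zero path in $C(p,p)$ has empty product, and the formula reduces to $\alpha_p(p) = \Lambda_p^-$, which is part of the definition of a canonical class.

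For the inductive step with $p \neq q$, I set $j^{*} = \min\{j \in \{1,\dots,k\} \mid w_j(p) \neq w_j(q)\}$ and invoke the peel-off identity
$$\alpha_p(q) = \sum_{(p,r)\in E} \frac{w(r)-w(p)}{w(q)-w(p)} \, \frac{\alpha_p(r)}{\Lambda_r^-} \, \alpha_r(q),$$
derived midway through the proof of Theorem~\ref{thm:main} for any $w \in H^2_T(M;\R)$ with $w(p) \neq w(q)$, specialized to $w = w_{j^{*}}$. I then show that only edges $(p,r) \in E$ with $h(p,r) = j^{*}$ contribute. Edges with $h(p,r) > j^{*}$ satisfy $w_{j^{*}}(p) = w_{j^{*}}(r)$, killing the numerator. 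For edges with $h(p,r) < j^{*}$, writing $j = h(p,r)$, the minimality of $j^{*}$ gives $w_j(p) = w_j(q)$; since $\alpha_p(r) \neq 0$, hypothesis \eqref{tech} forces $w_j^\xi(r) > w_j^\xi(p) = w_j^\xi(q)$, and a second application of \eqref{tech} yields $\alpha_r(q) = 0$.

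To each surviving edge $(p,r)$ I apply the inductive hypothesis and read the contribution as indexed by paths $\gamma = (p, r, \gamma'_2, \ldots, q)$ obtained by prepending $p$ to paths $\gamma' \in C(r, q)$. Two monotonicity checks align this indexing with $C(p, q)$. First, each prepended path lies in $C(p,q)$: the inductive formula for $\alpha_r(q)$ has nonvanishing denominators (verified by a telescoping use of \eqref{tech} which shows $w_{h(\gamma'_i, \gamma'_{i+1})}^\xi(\gamma'_i) < w_{h(\gamma'_i, \gamma'_{i+1})}^\xi(q)$ for every $\gamma' \in C(r,q)$), while $h(r, \gamma'_2) < j^{*} = h(p,r)$ would force $w_{h(r,\gamma'_2)}(r) = w_{h(r,\gamma'_2)}(q)$ via the chain $w_{j'}(p) = w_{j'}(r)$ and $w_{j'}(p) = w_{j'}(q)$ for $j' < j^{*}$, contradicting that nonvanishing denominator. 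Second, every $\gamma \in C(p,q)$ must satisfy $h(\gamma_1, \gamma_2) = j^{*}$: the well-definedness argument above shows $h(\gamma_1,\gamma_2) \geq j^*$, and the strict inequality would, by the non-decreasing condition on $h$ along $\gamma$, make $w_{j^{*}}$ constant from $p$ to $q$, contradicting $w_{j^{*}}(p) \neq w_{j^{*}}(q)$.

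The main obstacle is this interlocking well-definedness-plus-monotonicity analysis that identifies ``prepend $p$'' bijectively with the elements of $C(p,q)$ whose second vertex is $r$; once it is in place, multiplying the inductive factor for $\gamma'$ by $\frac{w_{j^{*}}(r)-w_{j^{*}}(p)}{w_{j^{*}}(q)-w_{j^{*}}(p)}\frac{\alpha_p(r)}{\Lambda_r^-}$ reproduces the first factor of the product along $\gamma$, matching the claimed formula exactly. Finally, the corner case where no such $j^{*}$ exists (i.e.\ $w_j(p) = w_j(q)$ for all $j$) is dispatched by the same contrapositive of \eqref{tech}: every edge $(p,r) \in E$ then has $\alpha_r(q) = 0$, so the peel-off identity applied with $w = \omega + \psi$ (a class for which $w(p) = \psi(p) \neq \psi(q) = w(q)$ by the genericity assumption on $\xi$) gives $\alpha_p(q) = 0$, and $C(p,q) = \emptyset$ because any $\gamma \in C(p,q)$ would, via well-definedness of its first denominator, force some $w_j(p) \neq w_j(q)$.
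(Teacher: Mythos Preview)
Your proof is correct and follows essentially the same strategy as the paper: at each vertex $r$ along a path to $q$, use the class $w_{h(r,q)}$. The paper packages this as a single application of Theorem~\ref{thm:main} with $w_r = w_{h(r,q)}$ (setting $w_r = 0$ when $\Sigma(r,q) = \emptyset$), then shows the surviving paths are exactly $C(p,q)$ with $h(\gamma_i,\gamma_{i+1}) = h(\gamma_i,q)$; you instead unroll the induction by hand via the peel-off identity, which forces you to treat the corner case $w_j(p)=w_j(q)$ for all $j$ separately. Both routes hinge on the same monotonicity facts you isolate.

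One small slip: in the corner case you assert $\psi(p)\neq\psi(q)$ ``by the genericity assumption on $\xi$,'' but genericity of $\xi$ concerns the isotropy weights, not the moment-map values at fixed points. The clean fix is Lemma~\ref{GT1}: if $\alpha_p(q)\neq 0$ then $\varphi(q)>\varphi(p)$, so $\psi(p)\neq\psi(q)$ and the peel-off with $w=[\omega+\psi]$ applies; otherwise $\alpha_p(q)=0$ already. Either way the conclusion stands.
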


\begin{remark}[\textbf{Positivity}]
\labell{positive terms} \rm
Note that, if $\alpha_r(r')$ is positive for every edge $(r,r')$ in $E$, then
the equation above
is manifestly positive, in the sense that every non-zero term
is positive; a fortiori, the restriction $\alpha_p(q)$ is positive for all
fixed points $p$ and $q$.
To see this, note that $\Lambda_r^-$ is positive by definition for all $r \in M^T$,
while $\w_i(r') - \w_i(r)$ is either positive or zero for each edge $(r,r') \in E$
by assumption (1).
Therefore, the formulas in Theorem~\ref{point} and Theorem~\ref{tower symp}.
which are both proved using the above corollary, a manifestly positive whenever
the restrictions themselves are manifestly positive.
In contrast, in
 general the restriction $\alpha_p(q)$ might not be positive 
(cf. \cite[Example 5.2]{GT}).
\end{remark}

We are now ready to prove our claims.

\begin{proof}[Proof of Theorem \ref{thm:main}]
Since $(\w_p - \w_p(p))(p) = 0$ and $\alpha_p$ is a canonical
class at $p$,
the restriction
$\alpha_p (\w_p - \w_p(p))(r)$
 is trivial
for all $r \in M^T$ such that $\lambda(r) \leq \lambda(p)$.
By Lemma \ref{lambda vanish},
this implies that we can write
\begin{equation*}\labell{main1}
\alpha_p \big(\w_p - \w_p(p)\big) = \sum_{\lambda(r) > \lambda(p)} x_r \alpha_r,
 \ 
\mbox{where }x_r \in H^{2 \lambda(p)- 2\lambda(r) + 2 }(BT;\R)
\ 
\forall \ r
\end{equation*}
By the definition of canonical class,
evaluating the above equation at $r$ implies
that 
 \begin{equation*}
\big(\w_p(r) - \w_p(p)\big) \frac{\alpha_p(r)}{\Lambda_r^-} = x_r \in \R 
 \ 
\mbox{for all }r \in M^T \mbox{ such that }\lambda(r) = \lambda(p) + 1.
\end{equation*}
Moreover,
by dimensional arguments, $x_r = 0$ for all $r \in M^T$ such that
$\lambda(r) > \lambda(p) + 1$.
Hence, 
$$
\alpha_p (\w_p - \w_p(p)) = 
\sum_{(p,r) \in E} (\w_p(r) - \w_p(p)) \frac{\alpha_p(r)}{\Lambda_r^-} \alpha_r.
$$
Restricting to $q$ and dividing by $\w_p(q) -\w_p(p)$ (which is not zero by
assumption), we have
\begin{equation*}
\alpha_p(q)  =  \sum_{(p,r) \in E}
\frac{\w_p(r) - \w_p(p)}{\w_p(q) - \w_p(p)}
\frac{\alpha_p(r)}{\Lambda_r^-}
\alpha_r(q).
\end{equation*}
Since the claim is obvious if $\lambda(q) - \lambda(p) \leq  1$,
the claim now follows by induction.
\end{proof}

\begin{proof}[Proof of Corollary~\ref{height}]
Definition \ref{def:cangraph} and hypothesis \eqref{tech} in Corollary \ref{height} 
 imply that  
$$
\mbox{either }
\w_j^\xi(r) < \w_j^\xi(r')\mbox{ or }\w_j(r) = \w_j(r')\mbox{ for all }(r,r') \in E 
\mbox{ and } 
1 \leq j \leq k.$$

Therefore, if $\gamma$ is a  path from $r$ to $q$ with at least one edge then 
either 
$\w_j^\xi(\gamma_i) < \w_j^\xi(q)$ or  $\w_j(\gamma_i) = \w_j(\gamma_{i+1}) = \w_j(q)$
for each $j$ and for each $i \leq |\gamma|$.
Since, by assumption, there exists $j \in \{1,\ldots,k\}$ such that
$\w_j(\gamma_1)  \neq  \w_j(\gamma_2)$,
this implies that $\w_j^\xi(r) < \w_j^\xi(q)$. A fortiori,
$\w_j(r) \neq \w_j(q)$, and so 
we can define
$$h(r,q) = \min \left\{ j 
\mid \w_j(r) \neq \w_j(q) \right\}
\
\forall \
r \in M^T \smallsetminus \{q\} \mbox{ such that } 
\Sigma(r,q) \neq \emptyset.$$
Moreover, if 
$\w_j(\gamma_i) = \w_j(q)$
for some $\gamma \in \Sigma(p,q)$ and $i  \leq |\gamma|$, 
then $\w_j(\gamma_i) = \w_j(\gamma_{i+1}) = \w_j(q)$ as well.
Therefore, 
\begin{equation}
\labell{height1}
h(\gamma_i, q) \leq h(\gamma_{i+1},q )
 \ 
 \mbox{and} 
 \ 
 h(\gamma_i,q) \leq h(\gamma_i,
\gamma_{i+1})
 \ 
 \mbox{for all } 1 \leq i \leq |\gamma|-1.
\end{equation}

The 
hypotheses of Theorem~\ref{thm:main} will
be satisfied if  we  let the class associated to $r \in M^T$  be
\begin{equation*}
\begin{cases}
\w_{h(r,q)}  & \mbox{ if } r \neq q \mbox{ and } \Sigma(r,q) \neq \emptyset, \mbox{ and } \\
0& \mbox{otherwise}.
\end{cases}
\end{equation*}
Therefore, 
\begin{equation*}
\alpha_p(q) = 
\Lambda_q^- 
\sum_{\gamma \in \Sigma(p,q)}
\prod_{i=1}^{|\gamma|} \frac
{ \w_{h(\gamma_i,q)}(\gamma_{i+1}) - \w_{h(\gamma_i,q)}(\gamma_i)}
{ \w_{h(\gamma_i,q)}(q) - \w_{h(\gamma_i,q)}(\gamma_i)}
\frac{\alpha_{\gamma_i}(\gamma_{i+1})}{\Lambda_{\gamma_{i+1}}^-}.
\end{equation*}
Moreover,  
a path $\gamma \in \Sigma(p,q)$ contributes $0$ to the formula above 
if $\w_{h(\gamma_i,q)}(\gamma_i) = \w_{h(\gamma_i,q)}(\gamma_{i+1})$ for some $i<|\gamma|$.
Therefore 
we only need to consider paths $\gamma$ from $p$ to $q$
so that
$$h(\gamma_i,\gamma_{i+1}) \leq h(\gamma_i,q) \quad \mbox{for all } 1 \leq i \leq |\gamma|.$$
Combining this fact together with \eqref{height1}, we may restrict to paths $\gamma$ so that
 \begin{gather*}
 h( \gamma_1,\gamma_2) \leq h(\gamma_2,\gamma_3) \leq \dots \leq 
h(\gamma_{|\gamma|},\gamma_{|\gamma| + 1})
\quad \mbox{and} \\
\quad h(\gamma_i,\gamma_{i+1}) = h(\gamma_i,q) \quad \mbox{for all } 1 \leq i \leq |\gamma|.
\end{gather*}
\end{proof}

Finally, the lemma below, which we will use in Section~\ref{bundles},
follows from an argument nearly identical to the first three sentences of the proof
of Theorem \ref{thm:main}.

\begin{lemma}\labell{int}
Let $(M,\omega,\psi)$ be a Hamiltonian $T$-manifold with discrete fixed set,
and let $\varphi = \psi^\xi$ be a generic component of the moment map.
Assume that canonical classes  $\alpha_p \in H_T^{2 \lambda(p)}(M;A)$ exist for all
$p \in M^T$; let $(V,E)$ be the canonical graph. 
Given a class $\w \in H_T^2(M;A)$,
$$ \big( \w(r) - \w(p) \big) \frac{\alpha_p(r)}{\Lambda_{r}^-}
\ \in A \quad \mbox{for all } (p,r) \in E. $$
\end{lemma}

\section{The Hamiltonian cone}
\labell{S:HC}

In this section, we give our first application of Theorem \ref{thm:main}.
Here, we use the Hamiltonian cone to pick the closed equivariant
two-forms and characterize which paths contribute to the formula. 

\begin{definition}\labell{hamcone}
Let a torus $T$ act on a 
manifold $M$.
The {\bf Hamiltonian cone} is 
the set of classes 
$\Hh \subset H^2(M;\R)$
which can be represented by
an invariant symplectic form 
that has a  
moment map.
\end{definition}

\begin{theorem}\labell{point}
Let  $(M,\omega,\psi)$ be a Hamiltonian $T$-manifold with discrete fixed set, and
let $\varphi = \psi^\xi$ be a generic component of the moment map.
Assume that 
canonical classes $\alpha_p \in H_T^{2 \lambda(p)}(M;A)$ exist for
all $p \in M^T$.
Pick  classes
$\w_1,\dots,\w_k\in H_T^2(M;\R)$
that restrict to classes in the closure of the component
of the Hamiltonian cone $\Hh \subset H^2(M;\R)$ containing $[\omega]$.
Assume that for each $(r,r') \in E$
there exists  $j$  such that 
$\w_j(r) \neq \w_j(r')$, and define  
$$h(r,r') = \min \big\{ j  \; \big| \;  \w_j(r) \neq \w_j(r') \big\}
\quad \mbox{for all } (r,r') \in E
.$$
Given $p$ and $q$ in $M^T$, let $\Sigma(p,q)$ denote the set of paths
from $p$ to $q$ in  the associated canonical graph $(V,E)$.
Then
\begin{equation*}
\alpha_p(q) = 
\Lambda_q^- 
\sum_{\gamma \in C(p,q)}
\prod_{i=1}^{|\gamma|} \frac
{ \w_{h(\gamma_i,\gamma_{i+1})}(\gamma_{i+1}) - \w_{h(\gamma_i,\gamma_{i+1})}(\gamma_i)}
{ \w_{h(\gamma_i,\gamma_{i+1})}(q) - \w_{h(\gamma_i,\gamma_{i+1})}(\gamma_i)}
\frac{\alpha_{\gamma_i}(\gamma_{i+1})}{\Lambda_{\gamma_{i+1}}^-},
\quad \mbox{where} 
\end{equation*}
$$C(p,q)= \left\{ \left. \gamma \in \Sigma(p,q) \ \right| \   h(\gamma_1,\gamma_2) \leq h(\gamma_2,\gamma_3) \leq \dots
\leq h(\gamma_{|\gamma|},\gamma_{|\gamma|+1})\right\}. $$
\end{theorem}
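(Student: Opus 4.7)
The plan is to reduce Theorem~\ref{point} directly to Corollary~\ref{height}, since the two statements have the same conclusion and the same combinatorial setup. All I need to do is verify that the hypothesis \eqref{tech} of Corollary~\ref{height}, namely
\begin{equation*}
\alpha_p(q)=0 \quad\text{for all } p,q\in M^T \text{ with } w_j(q)\neq w_j(p) \text{ and } w_j^\xi(q)\leq w_j^\xi(p),
\end{equation*}
follows from the assumption that each $w_j$ lies in the closure of the component of $\Hh$ containing $[\omega+\psi]$. Once \eqref{tech} is established, the formula is immediate from Corollary~\ref{height}.

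To verify \eqref{tech}, the main tool is Lemma~\ref{GT1} applied to a perturbed symplectic form. First, I would perturb $w_j$ to $w_j^{(\epsilon)}:=w_j+\epsilon[\omega+\psi]$. Since $\Hh$ is convex and $[\omega+\psi]$ lies in the interior of the relevant component, $w_j^{(\epsilon)}$ lies in the open component for every $\epsilon>0$ and is therefore represented by an invariant symplectic form $\omega^{(\epsilon)}$ with moment map $\psi^{(\epsilon)}$. By Lemma~\ref{cone} (cited in the remark after Corollary~\ref{height}), moving $[\omega+\psi]$ within a single component of $\Hh$ preserves the sign of every weight of the isotropy representation on each $T_pM$; hence the index function $\lambda$ and the products $\Lambda_p^-$ are unchanged, and the original $\alpha_p$ are also canonical classes for $(\psi^{(\epsilon)},\xi)$. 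Lemma~\ref{GT1} applied to $\psi^{(\epsilon)}$ then gives $\alpha_p(q)=0$ whenever
\begin{equation*}
(\psi^{(\epsilon)})^\xi(q)-(\psi^{(\epsilon)})^\xi(p)=\bigl(w_j^\xi(q)-w_j^\xi(p)\bigr)+\epsilon\bigl(\varphi(q)-\varphi(p)\bigr)<0.
\end{equation*}

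In the strict case $w_j^\xi(q)<w_j^\xi(p)$, the right hand side is negative for all sufficiently small $\epsilon>0$, finishing that sub-case. In the boundary case $w_j^\xi(q)=w_j^\xi(p)$ with $w_j(q)\neq w_j(p)$, either $\varphi(q)\leq\varphi(p)$ (in which case the original Lemma~\ref{GT1} applied to $\psi$ already gives the result) or $\varphi(q)>\varphi(p)$; in the latter situation I would introduce a second perturbation, tilting $\xi$ to $\xi+tv$ for some $v\in\ft$ with $\langle w_j(p)-w_j(q),v\rangle>0$, which exists because $w_j(p)\neq w_j(q)$. For $t>0$ sufficiently small, $\xi+tv$ remains generic and $\lambda,\Lambda_p^-$ are unchanged, so the canonical classes are still the same; the tie has been broken with $w_j^{\xi+tv}(q)<w_j^{\xi+tv}(p)$, and choosing $\epsilon\ll t$ reduces to the strict case already handled. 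The main obstacle is orchestrating these two perturbations so that genericity and the stability of canonical classes hold simultaneously --- this is essentially the content of Lemma~\ref{Propb1} referenced in the remark after Corollary~\ref{height}, and once it is in hand the theorem is immediate.
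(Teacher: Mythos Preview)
Your overall strategy is exactly the paper's: reduce to Corollary~\ref{height} by verifying hypothesis \eqref{tech}, and the paper packages that verification as Lemma~\ref{Propb1}.  Your sketch of the verification (perturb into the open part of the component, invoke Lemma~\ref{cone} so that $\lambda$ and $\Lambda_p^-$ are unchanged, then apply Lemma~\ref{GT1}) is also the paper's argument.

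There is, however, one concrete slip.  You write ``Since $\Hh$ is convex \ldots''\ to conclude that $w_j^{(\epsilon)}=w_j+\epsilon[\omega+\psi]$ lies in the relevant component for small $\epsilon>0$.  The Hamiltonian cone is \emph{not} convex in general --- the paper explicitly points this out in the remark following the statement of the theorem (a convex combination of symplectic forms need not be symplectic; only the K\"ahler cone is guaranteed convex).  So the specific perturbation $w_j+\epsilon[\omega+\psi]$ may fail to lie in the component, and the identity $(\psi^{(\epsilon)})^\xi=w_j^\xi+\epsilon\varphi$ is then unavailable.  The fix is immediate and is what the paper does: since $w_j$ lies in the \emph{closure} of the component, by definition there exists $[\omega'+\psi']$ in the component with $|\psi'(r)-w_j(r)|$ as small as you like for every $r\in M^T$ (the fixed set is finite).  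Use this $\omega'$ in place of your explicit $w_j^{(\epsilon)}$; Lemma~\ref{cone} and Lemma~\ref{GT1} then apply exactly as you describe.  The paper also streamlines the case analysis by perturbing $\xi$ \emph{first} so that $w_j^\xi(p)=w_j^\xi(q)$ only when $w_j(p)=w_j(q)$, which eliminates your boundary case entirely; your two-step perturbation works too, but is slightly more to keep track of.
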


\begin{remark}\rm
Assume that the following conditions hold:
\begin{enumerate}
\item[(X)] the restrictions of the $\w_i$  form a basis for $H^2(M;\R)$; and
\item[(Y)] the restriction of $\sum_i a_i \w_i$  lies in $\Hh$ for every 
positive $k$-tuple $a \in \R_+^k$.
\end{enumerate}
In this case, Theorem~\ref{point} can also be proved using the limit techniques
found in \cite{Za},
instead of Theorem~\ref{thm:main}.
(The argument still relies on  Lemmas~\ref{cone} and \ref{Propb1}.)
Note that, in this case, we don't need to assume that for each $(r,r') \in E$
there exists $j$ such that $\w_j(r) \neq \w_j(r')$; this holds automatically.
\end{remark}

\begin{remark} \rm
Assume that the torus $T$ acts on a compact manifold $M$, preserving a complex structure
$J \colon TM \to TM.$  If the fixed set is empty the Hamiltonian cone $\Hh$ is empty,
so assume that $M^T \neq \emptyset$.
The {\bf K\"ahler cone} of $M$ is the set
of classes in $H^2(M;\R)$ which can be represented by
a compatible symplectic form.
Since $T$ is compact, 
we can represent
every such class by an invariant symplectic form by averaging.
Moreover, by Frankel's theorem, the action is always Hamiltonian.
Hence, the K\"ahler cone is a subset of the 
Hamiltonian cone.
(Note that Lemma~\ref{cone} is obvious if 
$[\omega']$
is in the K\"ahler cone containing 
$[\omega]$.)
Analogous statements hold 
if $J$ is an almost complex structure
and $H^1(M;\R) = 0$.

Note also that the  K\"ahler cone is convex because any convex combination
of compatible symplectic forms is itself a compatible symplectic form.
In contrast, a convex combination of arbitrary 
symplectic forms
may or may not be symplectic.
\end{remark}

\begin{lemma}\labell{cone}
Let $(M,\omega,\psi)$ be a Hamiltonian $T$-manifold, and let $\varphi = \psi^\xi$
be a generic component of the moment map.
Let $\omega'$ be a symplectic form on $M$ with moment
$\psi'$ so that 
$[\omega']$ lies in the component of 
$\Hh \subset H^2(M;\R)$ containing $[\omega]$.
Then $(\Lambda'_p)^-$, the product of the positive
weights of the isotropy representation 
of $T$ on $(T_pM, \omega')$, is $\Lambda_p^-$ for all $p \in M^T$.
\end{lemma}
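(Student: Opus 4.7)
The plan is to exploit the fact that the complex weights of the isotropy $T$-representation at each fixed point lie in the discrete lattice $\ell^* \subset \td$, and hence are locally constant under any continuous deformation of the symplectic form. The main task is therefore to extract, from the hypothesis that $[\omega+\psi]$ and $[\omega'+\psi']$ lie in the same connected component of $\Hh$, a continuous family of invariant symplectic forms on $M$ joining $\omega$ to $\omega'$.

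First I would argue that $\Hh$ is an open subset of $H_T^2(M;\R)$. Fix a continuous linear right inverse $\sigma$ to the projection from invariant closed equivariant $2$-forms on $M$ to $H_T^2(M;\R)$ (obtained, for example, from a Hodge splitting or from a finite-dimensional complement to the equivariantly exact forms). Given $c = [\omega+\psi] \in \Hh$ and a nearby class $c' \in H_T^2(M;\R)$, the equivariant form $(\omega+\psi) + \sigma(c'-c)$ represents $c'$, and its underlying $2$-form is a small $C^0$-perturbation of $\omega$ and so is still symplectic; hence $c' \in \Hh$. Since $\Hh$ is open in the finite-dimensional vector space $H_T^2(M;\R)$, its connected components are path-connected, so I can choose a continuous path $c_t$ from $[\omega+\psi]$ to $[\omega'+\psi']$ inside the common component. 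Lifting $c_t$ through $\sigma$ yields a continuous family of equivariant $2$-forms with symplectic underlying forms, and a Moser-type adjustment by equivariantly exact corrections at the two endpoints---which is possible because the space of symplectic representatives of a fixed class in a component of $\Hh$ is non-empty and locally connected---produces a continuous family $(\omega_t, \psi_t)$ of invariant symplectic forms with moment maps satisfying $(\omega_0,\psi_0)=(\omega,\psi)$ and $(\omega_1,\psi_1)=(\omega',\psi')$.

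Next I would choose a continuous family of $T$-invariant $\omega_t$-compatible almost complex structures $J_t$; this is possible because the fiber over each $t$ of compatible almost complex structures on $TM$ is contractible, and $T$-invariance can be arranged by averaging. For each $p \in M^T$, the complex $T$-representation on $(T_pM, J_t)$ has weights $\eta_1(t),\dots,\eta_n(t) \in \ell^*$ depending continuously on $t$; since $\ell^*$ is discrete, each $\eta_i(t)$ is constant in $t$. Positivity of a weight is the intrinsic condition $\langle \eta,\xi\rangle>0$, which is independent of the symplectic form, so the multiset of positive weights at $p$ coincides at $t=0$ and $t=1$, and taking products gives $(\Lambda'_p)^- = \Lambda_p^-$. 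The main obstacle will be the path-lifting in the previous paragraph, especially the endpoint matching; once this is in place, the remaining continuity-plus-discreteness arguments are routine.
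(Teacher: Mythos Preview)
Your strategy has a real gap at the path-lifting step, and the ``Moser-type adjustment'' you invoke does not close it. The section $\sigma$ lets you lift a short arc of the path $c_t$ starting at $[\omega+\psi]$ to a family of symplectic forms close to $\omega$, but there is no reason the lift $(\omega+\psi)+\sigma(c_t-c_0)$ stays symplectic along the whole path, and patching local lifts runs into the following problem: at an overlap you obtain two \emph{cohomologous} invariant symplectic forms, and you need to connect them by a path of symplectic forms. Moser's theorem does not produce such a path; it presupposes one. Local connectedness of the fiber $\Phi^{-1}(c)$ is automatic (it is open in an affine space) but irrelevant; what you would need is \emph{connectedness} of the fiber, i.e.\ that any two cohomologous invariant symplectic forms are joined by a path of symplectic forms, and this is not known in general and is essentially as hard as the lemma itself. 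In particular, even at the endpoint you only obtain some $\omega''$ with $[\omega'']=[\omega']$, and you have no argument that $\omega''$ and the given $\omega'$ induce the same weights at fixed points.

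The paper's proof avoids deformations of the symplectic form entirely. It first reduces to the case where $[\omega'+\psi']$ is close to $[\omega+\psi]$, so that the orderings of the fixed points by $\varphi$ and by $\varphi'=(\psi')^\xi$ agree. Then, for each $p\in M^T$, Kirwan's class $\gamma_p$ (built with respect to $\varphi$) satisfies $\gamma_p(p)=\Lambda_p^-$ and vanishes at all fixed points strictly below $p$ for \emph{either} ordering; applying Corollary~\ref{GT2} with respect to $\varphi'$ then forces $\Lambda_p^-$ to be a multiple of $(\Lambda'_p)^-$, and by symmetry also conversely. Since both are products of positive weights, they coincide. This is a purely cohomological divisibility argument and never requires producing a path in the space of symplectic forms from $\omega$ to $\omega'$; that is precisely what your approach needs and cannot supply.
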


\begin{proof}
Let $\varphi' = (\psi')^\xi$.
It is sufficient to prove the claim for all $\omega'$ such 
that $[\omega']$ lies in some neighborhood  of $[\omega]$.
Therefore, we may assume that 
$$\varphi(r) < \varphi(s)\;\;\; \Rightarrow\;\;\; \varphi'(r) < \varphi'(s)
\quad \mbox{for all  } r \mbox{ and }s \in M^T.$$

Fix $p \in M^T$.
By applying Lemma~\ref{kirwan} to $\varphi$,  there exists a class $\gamma_p
\in H_T^{2 \lambda(p)}(M;\Z)$ 
so that $\gamma_p(p) = \Lambda_p^-$ and
$\gamma_p(q) = 0$ for every $q \in M^T \smallsetminus \{p\}$ such
that $\varphi(q) \leq \varphi(p)$.  By the assumption above,
this implies that $\gamma_p(q) = 0$ for every $q \in M^T \smallsetminus \{p\}$
such that $\varphi'(q) < \varphi'(p)$.  By applying Corollary~\ref{GT2}
to $\varphi'$, this implies  that $\Lambda_p^- = \gamma_p(p)$
is a multiple of $(\Lambda_p')^-$.
Since a nearly identical argument shows that
$(\Lambda_p')^-$ is a multiple of $\Lambda_p^-$,
the claim follows from the fact that these are both positive. 
\end{proof}
 
 \begin{lemma}\labell{Propb1}
Let $(M,\omega,\psi)$  be a Hamiltonian
$T$-manifold with discrete fixed set, and
let $\varphi = \psi^\xi$ be a generic component of the moment map.
Fix a class $\w\in H_T^2(M;\R)$ 
that restricts to a class
in the closure of the component of $\mathcal{H}
\subset H^2(M;\R)$ containing $[\omega]$. 
Given a canonical class   $\alpha_p \in H_T^{2 \lambda(p)}(M;A)$ 
at $p \in M^T$ (with respect to $\varphi$),
$$\alpha_p(q) = 0 \quad \mbox{for all } p \mbox{ and } q \in M^T \mbox{ such that }
\w(q) \neq \w(p) \mbox{ and } \w^\xi(q) \leq \w^\xi(p).
$$
\end{lemma}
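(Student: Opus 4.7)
The plan is to deduce the lemma from Lemma~\ref{GT1} via a two-step perturbation argument: first perturb $\xi$ slightly so that $w^{\xi'}(q) < w^{\xi'}(p)$ strictly, then approximate $w$ by a sequence of classes inside the component $\Hh_0 \subset \Hh$ containing $[\omega+\psi]$, and apply Lemma~\ref{GT1} to nearby symplectic structures for which $\alpha_p$ remains a canonical class. The main subtlety is to check that $\alpha_p$ is still a canonical class after each perturbation: Lemma~\ref{cone} supplies this for the $\omega$-perturbation, while openness of genericity handles the $\xi$-perturbation.

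For the first step, since $w(q) \neq w(p)$ in $\td$, I pick $\eta \in \ft$ with $\langle w(q) - w(p), \eta \rangle < 0$ and set $\xi' = \xi + \epsilon \eta$ for some small $\epsilon > 0$. Then
$$w^{\xi'}(q) - w^{\xi'}(p) = \big(w^\xi(q) - w^\xi(p)\big) + \epsilon \langle w(q) - w(p), \eta \rangle < 0,$$
since $w^\xi(q) \leq w^\xi(p)$ by hypothesis and the second summand is strictly negative. The weights of the isotropy representations at the fixed points form a finite set, so genericity is an open condition on $\ft$; for $\epsilon$ sufficiently small, $\xi'$ is generic and the sign of $\langle \eta',\xi' \rangle$ agrees with that of $\langle \eta',\xi \rangle$ for each such weight $\eta'$. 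The positive weights at every fixed point, and hence $\Lambda_r^-$ and $\lambda(r)$, are therefore unchanged, so $\alpha_p$ remains a canonical class at $p$ with respect to $\psi^{\xi'}$.

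For the second step, since $w$ lies in the closure of the open set $\Hh_0$, I pick a sequence of classes $[\omega_n + \psi_n] \in \Hh_0$ converging to $w$, where each $\omega_n$ is an invariant symplectic form with moment map $\psi_n$. Continuity of the restriction map $H^2_T(M;\R) \to H^2_T(\{r\};\R) = \td$ gives $\psi_n(r) \to w(r)$ for each fixed point $r$, so $\psi_n^{\xi'}(q) < \psi_n^{\xi'}(p)$ for $n$ sufficiently large. By Lemma~\ref{cone}, $\Lambda_r^-$ is unchanged at every fixed point when we pass from $\omega$ to $\omega_n$; since it is a homogeneous element of $\Sym(\td)$, its degree $\lambda(r)$ is also unchanged. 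Hence $\alpha_p$ remains a canonical class at $p$ for $(M,\omega_n,\psi_n)$ with respect to $\xi'$, and Lemma~\ref{GT1} applied to $(M,\omega_n,\psi_n,\psi_n^{\xi'})$ yields $\alpha_p(q) = 0$, as required.
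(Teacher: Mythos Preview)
Your proof is correct and follows essentially the same strategy as the paper's: perturb $\xi$ to make the inequality $w^{\xi}(q) \leq w^{\xi}(p)$ strict, approximate $w$ by a genuine $[\omega'+\psi']$ in the same component $\Hh_0$, invoke Lemma~\ref{cone} to see that $\alpha_p$ is still canonical for the new data, and then apply Lemma~\ref{GT1}. The only cosmetic difference is that the paper perturbs $\xi$ once so that $w(p)=w(q)\Leftrightarrow w^\xi(p)=w^\xi(q)$ for \emph{all} pairs of fixed points (obtaining a uniform gap $\epsilon$), whereas you fix the pair $(p,q)$ first and choose $\eta$ tailored to it; since the conclusion is a universally quantified statement over pairs, both are fine. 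Your justification that $\alpha_p$ remains canonical after the $\xi$-perturbation is in fact more explicit than the paper's ``we may assume''.
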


\begin{proof}
By perturbing $\xi$ slightly, if necessary, we may assume
that $\w(p) = \w(q)$ exactly if $\w^\xi(p) = \w^\xi(q)$ for all $p$ and $q$ in $M^T$.
Hence, there exists $\epsilon > 0$ so that
$\w^\xi(q) < \w^\xi(p) - \epsilon$ for all 
$p$ and
$q \in M^T$ such
that $\w(q) \neq \w(p)$ and $\w^\xi(q) \leq \w^\xi(p)$.
By assumption, there exists a symplectic form $\omega'$ with moment map $\psi'$ so that 
\begin{equation}\label{(a)}
 \big| (\psi')^\xi(p) - \w^\xi(p) \big| < \frac{1}{2} \epsilon\quad\mbox{for all }p \in M^T,
 \end{equation}
and $[\omega']$ lies in the closure of the component of $\mathcal{H}
\subset H^2(M;\R)$ containing $[\omega]$.
By Lemma \ref{cone}, 
the latter fact implies that
the product of the positive
weights for the isotropy action on $(T_pM, \omega')$ is $\Lambda_p^-$ for all $p \in M^T$. Hence, by the definition of canonical class,
 $\alpha_p$ is also the canonical class at $p$ with respect to $\varphi'$.
By Lemma~\ref{GT1}, this implies that 
\begin{equation*} 
\alpha_p(q) = 0 \quad \mbox{for all } p \mbox{ and } q \in M^T \mbox{ such that } (\psi')^\xi(q) 
< (\psi')^\xi(p). 
\end{equation*}
Finally,
\eqref{(a)} implies that
$(\psi')^\xi(q) < (\psi')^\xi(p)$ for all $q \in M^T$
such that $\w^\xi(q) < \w^\xi(p) - \epsilon$.
\end{proof}

\begin{proof}[Proof of Theorem \ref{point}]
The claim follows immediately from Lemma~\ref{Propb1}
and Corollary~\ref{height}.

\end{proof}

Finally, we make the following observation, which
we will not need in this paper.

\begin{lemma}
Let $(M,\omega,\psi)$ be a Hamiltonian $T$-manifold. Let $\omega'$ be a symplectic form on $M$ with moment
map
$\psi'$ so that 
$[\omega']$ lies in the component of 
 $\Hh \subset H^2(M;\R)$ containing $[\omega]$.
Then $c_1(M) = c_1'(M)$, where $c_1(M)$ and $c_1'(M) \in H_T^2(M;\Z)$ are the first equivariant Chern class
associated to $\omega$ and $\omega'$, respectively.
\end{lemma}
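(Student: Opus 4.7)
The plan is to argue that the first equivariant Chern class is locally constant on the space of invariant symplectic forms (with moment maps), so in particular takes the same value on two such forms whose equivariant classes lie in the same component of the Hamiltonian cone $\Hh$.

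First, I would recall that for any invariant symplectic form $\omega$, the space of $\omega$-compatible invariant almost complex structures is nonempty and contractible; thus $c_1(M,\omega) := c_1(TM, J) \in H_T^2(M;\Z)$ is a well-defined invariant of $\omega$ alone, independent of the choice of compatible $J$. Given a smooth family $\omega_t$ of invariant symplectic forms, one can choose compatible invariant $J_t$ varying smoothly in $t$ by the parametric version of the same statement. Then $c_1(TM, J_t)$ varies continuously in $t$ and takes values in the discrete image of $H_T^2(M;\Z)$ inside $H_T^2(M;\R)$, so it is constant in $t$.

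The main step is to produce a smooth family $(\omega_t, \psi_t)$ of invariant symplectic forms with moment maps joining $(\omega, \psi)$ to $(\omega', \psi')$. By hypothesis, there exists a continuous path $\tau_t \in \Hh$ joining $[\omega + \psi]$ to $[\omega' + \psi']$. Near any fixed $(\omega_0, \psi_0)$, an equivariant Moser argument shows that every class in a sufficiently small neighborhood of $[\omega_0 + \psi_0]$ in $H_T^2(M;\R)$ is realized by an invariant symplectic form with moment map $C^\infty$-close to $(\omega_0, \psi_0)$, since non-degeneracy is open and the needed primitives exist (recall $H^1(M;\R)=0$ for Hamiltonian manifolds). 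Covering $[0,1]$ by finitely many sub-intervals admitting such local lifts and interpolating between two lifts of the same class via Moser's equivariant isotopy theorem yields the desired global family $(\omega_t, \psi_t)$. Combining this with the local constancy from the first step gives $c_1(M) = c_1'(M)$.

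The main obstacle is the global path-lifting: one must concatenate local lifts in a continuous equivariant manner so that the resulting family realizes (up to reparametrization) the originally chosen path in $\Hh$. This is largely routine Moser theory in the equivariant setting, but requires some bookkeeping; no new ideas beyond standard stability arguments are needed.
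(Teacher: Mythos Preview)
Your approach is genuinely different from the paper's, and it has a real gap at exactly the point you flag as ``the main obstacle.''

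The paper argues via fixed-point localization: Lemma~\ref{cone} shows that the product of positive isotropy weights at each fixed point is the same for $\omega$ and $\omega'$; hence at every fixed point $q$ with $\lambda(q)\le 1$ the full weight sets agree, so $c_1(M)(q)=c_1'(M)(q)$; then Lemma~\ref{lambda vanish} forces the degree-two class $c_1(M)-c_1'(M)$ to vanish.  No path of symplectic forms is ever constructed.

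Your argument, by contrast, tries to build a continuous family $(\omega_t,\psi_t)$ joining $(\omega,\psi)$ to $(\omega',\psi')$.  The local step is fine: near any $(\omega_0,\psi_0)$ you can realize nearby equivariant classes by nearby forms, so locally you can lift a path in $\Hh$.  The problem is the endpoint.  Concatenating local lifts starting from $\omega$ produces a continuous family ending at some $\omega_1$ with $[\omega_1+\psi_1]=[\omega'+\psi']$, but there is no reason for $\omega_1$ to equal $\omega'$ or even to be $C^\infty$-close to it.  To finish you would need to connect $\omega_1$ and $\omega'$ by a path of invariant symplectic forms, and this is precisely what Moser's theorem does \emph{not} give you: Moser requires as input a path of symplectic forms in the same class (e.g.\ the straight-line interpolation being non-degenerate), it does not manufacture one.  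In general, two cohomologous symplectic forms need not lie in the same deformation class, so ``routine Moser theory'' does not close this gap.  Equivalently, you are implicitly assuming that the map from the space of invariant symplectic forms to $\Hh$ induces an injection on $\pi_0$, which is not obvious and is essentially as strong as the lemma you want.

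The paper's localization argument sidesteps this entirely by never leaving the fixed-point data; that is what makes it work.
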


\begin{proof}
Let $\varphi = \psi^\xi$ be a generic
component of the moment map.
Since the weights in the representations $(T_qM,\omega)$ and $(T_qM,\omega')$ agree up
to sign, Lemma \ref{cone} implies immediately that 
 $c_1(M)(q) = c'_1(M)(q)$ for all $q \in M^T$
such that $\lambda(q) \leq 1$.
By Lemma~\ref{lambda vanish}, this implies that $c_1(M) - c'_1(M) = 0$. 
\end{proof}

\section{Fiber bundles}
\labell{bundles}

In this section, we show how to use Theorem~\ref{thm:main}
(and Corollary~\ref{height})
to get effective formulas for the restrictions $\alpha_p(q)$ in
the case that our Hamiltonian $T$-manifold is a 
fiber  bundle
over a Hamiltonian $T$-manifold 
(and  certain technical restrictions hold).
In certain very nice cases, 
such as when $M$ is a 
``tower of complex projective spaces''
(see Definition \ref{def tower})
and the restrictions of the canonical classes are positive,
the contribution from each path will be 
a positive 
integer
multiple of
the product of 
 positive weights.
More precisely, 
let $(M,\omega,\psi)$ and $\big(\Mt,\ot,\pt\big)$ be  Hamiltonian $T$-manifolds.
We will consider the following maps.

\begin{definition}
A  map  $\pi \colon M \to \Mt$ is a 
{\bf  strong symplectic fibration}\footnote{
Every map satisfying (1) is a symplectic fibration;
see \cite[Lemma 6.2]{MS}.}
 if
\begin{enumerate}
\item[(1)] the map $\pi$ is an equivariant fiber bundle with  {\bf symplectic fibers}, 
that is,  the restriction of $\omega$ to the fiber $\Mh_p = \pi^{-1}(\pi(p))$ 
is symplectic for all $p \in M$;
and
\item[(2)] as symplectic representations
$(T_p M, \omega) \simeq (T_p \Mh_p, \omega|_{\Mh_p}) \oplus (T_{\pi(p)} \Mt, \ot)$ for
all $p \in M^T$.
\end{enumerate}
\end{definition}

\begin{example}\labell{example essf}\rm
There are several situations where an equivariant fiber bundle $\pi
\colon M \to \Mt$ is {\em automatically} a strong
symplectic fibration.
\begin{itemize}
\item[(i)] Let $J$ and $\Jt$ be compatible almost complex structures 
on $M$ and $\Mt$, respectively. 
If $\pi \colon M \to \Mt$ intertwines
$J$ and $\Jt$, i.e. $d\pi \circ J = \Jt \circ d\pi$,
then $\pi$ is  a strong symplectic fibration. 
The fibers are symplectic because $T_p \Mh_p$ is $J$ invariant for all $p \in M$.
For all $p \in M^T$,
the symplectic perpendicular $H_p = ( T_p \Mh_p)^\omega$ is a complex
subspace and  $T_p M = T_p \Mh_p \oplus H_p$ as complex representations.
Finally, $\pi$ induces an isomorphism of
$(H_p,\omega|_{H_p})$ and $(T_{\pi(p)} \Mt,\ot)$ as symplectic representations.
\item[(ii)]
If $\pi$ has  
symplectic fibers and 
$\omega_\mu =  \mu \omega + (1 - \mu) \pi^*(\ot)$ is symplectic for all 
$\mu  \in (0,1]$,
then $\pi$ is a strong symplectic fibration. 
Since $(0,1]$ is connected,
$(T_p M,\omega_\mu) \simeq (T_p M, \omega)$ as symplectic representations
for all $\mu \in (0,1]$ and all $p \in M^T$.
But by Lemma~\ref{omegat},
for any sufficiently small $\mu > 0$,
$(T_p M, \omega_\mu) \simeq (T_p \Mh_p, \omega|_{\Mh_p})
\oplus (T_{\pi(p)} \Mt, \ot)$ for all $p \in M^T$.
\end{itemize}
\end{example}

\begin{remark}\labell{essf}\rm
Let $\pi \colon M \to \Mt$ be  any equivariant fiber bundle with symplectic 
fibers.
Then
$(T_p M, \omega) \simeq 
(T_p \Mh_p, \omega|_{\Mh_p}) \oplus (H_p, \omega|_{H_p})$ for
all $p \in M^T$, where
 $H_p \subset T_pM$ is the symplectic perpendicular to $T_p \Mh_p$.
Moreover, $d \pi \colon H_p \to T_{\pi(p)} \Mt$ is an 
equivariant isomorphism, and
so the weights in the symplectic representations
$(H_p, \omega|_{H_p})$ and $(T_{\pi(p)} \Mt,\ot)$ necessarily agree up to sign.
The map $\pi$ is a strong symplectic fibration 
if they agree exactly.
\end{remark}

\begin{definition}\labell{def tower}
Let $\{(M_j,\omega_j,\psi_j)\}_{j=0}^{k}$ be Hamiltonian
$T$-manifolds with discrete fixed sets
and let $\{\rho_j \colon  M_{j+1}\rightarrow M_j\}_{j=0}^{k-1}$
be  strong symplectic fibrations. Assume that $M_0$ is a point.
Then, given a subring $A \subseteq \R$,
$M_k$ is a {\bf tower of complex projective spaces over $\mathbf{A}$} 
if the fiber $F_j$ of  $\rho_j$ 
satisfies $H^*(F_j;A) \simeq H^*\big(\CP^{\frac{1}{2} \dim(F_j)};A\big)$ 
as rings for all $j$.
\end{definition}

\noindent
{\bf Notation:}
Given a strong symplectic fibration $\pi \colon M \to \Mt$
and a generic component of the moment map $\varphi = \psi^\xi \colon M \to \R$,
let $\Lh_p^-$ denote the equivariant Euler class of the negative
normal
bundle of the restriction $\varphi |_{\Mh_p}$ at $p \in \Mh_p$, and let 
$2 \lh(p)$ denote the index of $p$ in $\Mh_p$, for all $p \in M^T$.
(Since the restriction of $\omega$ to $\Mh_p$ is symplectic,
the restriction of $\psi$ to $\Mh_p$ is a moment map.)
Similarly,  let $\Lt_q^-$ denote the equivariant
Euler class of the negative normal bundle of $\vt = \pt^\xi$ at $q$
and let $2 \lt(q)$ denote the index of $q \in \Mt$, for all $q \in \Mt^T$.

Finally, given a subring $A \subseteq \R$,
let $A_+ = \{t \in A \mid t >  0 \}$ and
let $A^\times \subset A$ denote the set of units.

We can now state our main theorem in this section.

\begin{theorem}\labell{tower symp}
Let $\{(M_j,\omega_j,\psi_j)\}_{j=0}^{k}$ be Hamiltonian
$T$-manifolds with discrete fixed sets
and let $\{\rho_j \colon  M_{j+1}\rightarrow M_j\}_{j=0}^{k-1}$
be  strong symplectic fibrations.
Let $\varphi_k = \psi_k^\xi$ be a generic component of the moment map.
Fix a subring $A \subseteq \R$.
Assume that 
$M_0$ is a point and 
that
canonical classes $\alpha_p \in H_T^{2 \lambda(p)}(M_k;A)$ exist for
all $p \in M_k^T$.
Let\footnote{
In this paper, our convention is that an empty composition or product is the identity.
Hence $\pi_k = \id_{M_k}$.} 
$\pi_j=\rho_j\circ \rho_{j+1}\circ \cdots \circ \rho_{k-1} \colon M_k \to
M_j$ 
and let $\pb_j=\pi_j^*(\psi_j) \colon M_k \to \ft^*$ for all
$j$.
Finally, define 
$$
h(r,r')=\min \{j\in \{1,\ldots,k\}\mid \pi_j(r)\neq \pi_j(r')\}
\  \mbox{for all distinct }
r, 
r'\mbox{ in }M^T.
$$

\begin{enumerate}
\item[1.]

Given $p$ and $q$ in $M_k^T$, let $\Sigma(p,q)$ denote the set of paths from 
$p$ to $q$ 
in the associated canonical graph $(V,E)$; then
\begin{gather*}
\alpha_p(q) = \sum_{\gamma \in C(p,q)}\Xi(\gamma),\quad\mbox{where }\\
\Xi(\gamma)=\Lambda_q^- 
\prod_{i=1}^{|\gamma|} \frac
{ \pb_{h(\gamma_i,\gamma_{i+1})}(\gamma_{i+1}) - \pb_{h(\gamma_i,\gamma_{i+1})}(\gamma_i)}
{ \pb_{h(\gamma_i,\gamma_{i+1})}(q) - \pb_{h(\gamma_i,\gamma_{i+1})}(\gamma_i)}
\frac{\alpha_{\gamma_i} (\gamma_{i+1})}{\Lambda_{\gamma_{i+1}} ^-} 
\ \ \forall  
\gamma\in C(p,q),
\mbox{ and}\\ 
C(p,q) = \big\{\gamma=(\gamma_1,\ldots,\gamma_{|\gamma|+1}) \in \Sigma(p,q) 
\,\big|\,  h(\gamma_1,\gamma_2) \leq \dots
\leq h(\gamma_{|\gamma|},\gamma_{|\gamma|+1})\big\}. 
\end{gather*}
\item[2.]
Assume that $M_k$ is a tower of complex projective
spaces over $A$.
Then for each path $\gamma \in C(p,q)$, 
\begin{itemize}
\item $\Xi(\gamma)$
can be written as the product of positive weights in
$\ell^*$ and a constant $C$ in $A$; moreover, $C > 0$ if
$\alpha_{r} (r')$ is positive
for all  $(r,r') \in E$. 
\item If $(M,\omega,\psi)$ is a GKM space, then
$\Xi(\gamma)$ can be written as the product of distinct positive weights in 
$\Pi_q(M)$ and a constant $C$ in $A$.
Finally, if $\Theta(r,r') > 0$ for all $(r,r') \in E$, then $C > 0$;
similarly, if $\Theta(r,r') \in A^\times$ for all $(r,r') \in E$,
then $C \in A^\times$.
\end{itemize}
\end{enumerate}
\end{theorem}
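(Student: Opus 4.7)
The plan for Part~1 is to apply Theorem~\ref{point} with the equivariant classes $w_j := \pi_j^*[\omega_j + \psi_j] \in H_T^2(M_k;\R)$ for $j = 1,\ldots,k$. Since these restrict at a fixed point $r$ to $\psi_j(\pi_j(r)) = \bar{\psi}_j(r)$, the function $h$ of Theorem~\ref{point} coincides with the one defined here, and the asserted formula is exactly the conclusion of that theorem. The one nontrivial hypothesis to verify is that each $w_j$ lies in the closure of the component of $\mathcal{H} \subset H_T^2(M_k;\R)$ containing $[\omega_k + \psi_k]$. I would verify this by producing, for each $t \in (0,1]$, an invariant symplectic form $\omega_t$ on $M_k$ with moment map $\psi_t$ such that $[\omega_t + \psi_t] = (1-t)w_j + t[\omega_k + \psi_k]$, and then letting $t \to 0^+$. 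The natural candidate $\omega_t = (1-t)\pi_j^*(\omega_j) + t\,\omega_k$ is nondegenerate on each factor of the vertical/horizontal splitting guaranteed by the strong symplectic fibration hypothesis (and the cross terms vanish at fixed points); globally, one may need the compactness/rescaling argument of Lemma~\ref{omegat} referenced in Example~\ref{example essf}~(ii) to obtain nondegeneracy away from fixed points. Once $w_j$ is placed in the desired closure, Theorem~\ref{point} delivers Part~1 directly.

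For Part~2, I would induct on $k$ and decompose each path $\gamma \in C(p,q)$ into maximal \emph{level runs}: sub-paths $\gamma^{(1)},\ldots,\gamma^{(m)}$ along which $h(\gamma_i,\gamma_{i+1})$ is constantly equal to some $j_\ell$, with $j_1 < \cdots < j_m$ strictly increasing by the definition of $C(p,q)$. Since consecutive vertices in $\gamma^{(\ell)}$ agree under $\pi_{j_\ell-1}$, the run $\gamma^{(\ell)}$ is contained in a single fiber $F$ of $p_{j_\ell-1} \colon M_{j_\ell} \to M_{j_\ell-1}$, and by hypothesis $H^*(F;A) \simeq H^*(\CP^{n_{j_\ell-1}};A)$ as rings. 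A Hamiltonian space with the cohomology ring of projective space has canonical class restrictions that factor as a product of distinct positive weights times a nonzero integer, just as in the example of $\CP^n$ in Section~\ref{subGKM}. Applying this to each level run, and using the strong symplectic fibration to identify the weights on the fiber $F$ with a distinguished subset of $\Pi_q(M_k)$ (respectively, of the weights in $T_qM_k$ in the non-GKM case), one obtains a factorization of the contribution from $\gamma^{(\ell)}$ as a product of positive weights times an element of $A$.

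The main obstacle is showing that the denominator factors $\bar{\psi}_{h_i}(q) - \bar{\psi}_{h_i}(\gamma_i)$ cancel systematically against weights appearing in $\Lambda_q^-$, leaving behind exactly a product of positive weights in $\ell^*$ and (in the GKM case) \emph{distinct} elements of $\Pi_q(M)$. This requires careful bookkeeping: within each level run, the fiber computation contributes weights from the fiber's tangent representation, and the remaining weights of $\Lambda_q^-$ from directions transverse to that fiber must combine with the corresponding denominator factors (indexed by the same $h$-level) to cancel telescopically. Pairwise linear independence of weights at a GKM fixed point ensures distinctness. Positivity of the constant $C$ follows by induction across level runs: each numerator $\bar{\psi}_{h_i}(\gamma_{i+1}) - \bar{\psi}_{h_i}(\gamma_i)$ is a positive multiple of the edge weight (because $\bar{\psi}_{h_i}^\xi$ strictly increases along the edge by \eqref{tech}), each factor $\alpha_{\gamma_i}(\gamma_{i+1})/\Lambda_{\gamma_{i+1}}^-$ or $\Theta(\gamma_i,\gamma_{i+1})$ is positive by hypothesis, and the $\CP^n$-type fiber contribution is positive; the $A^\times$ claim is tracked similarly, noting that the $\CP^n$ formula yields a unit on each level run when $\Theta(r,r') \in A^\times$ throughout.
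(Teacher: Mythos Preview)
Your plan for Part~1 is close in spirit to the paper's but differs in one important technical respect, and the difference conceals a genuine gap. The paper does \emph{not} apply Theorem~\ref{point}; it applies Corollary~\ref{height} directly, verifying condition~\eqref{tech} for each $w_j=\pi_j^*[\omega_j+\psi_j]$ via Lemma~\ref{sympb} (proved through the Morse--Bott argument of Lemma~\ref{Propb}). Your route would instead require showing that each $w_j$ lies in the closure of the component of $\mathcal H$ containing $[\omega_k+\psi_k]$, and your proposed witness $\omega_t=(1-t)\pi_j^*(\omega_j)+t\,\omega_k$ is not guaranteed to be symplectic for all $t\in(0,1]$: the composition $\pi_j$ need not have symplectic fibers (the paper notes explicitly that compositions of strong symplectic fibrations may fail this), so Lemma~\ref{omegat} does not apply to $\pi_j$ directly. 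What does work is an \emph{iterated} application of Lemma~\ref{omegat} one level at a time, producing forms of the shape $\pi_j^*(\omega_j)+t_{j+1}\pi_{j+1}^*(\omega_{j+1})+\cdots+t_k\omega_k$ for small $t_i$; this is precisely the inductive construction in the proof of Lemma~\ref{sympb}. Once you have that, either route (yours via Theorem~\ref{point}, or the paper's via Corollary~\ref{height}) closes; the paper's route is slightly more economical because Lemma~\ref{Propb} verifies~\eqref{tech} without needing to connect the approximating forms back to $\omega_k$ by a path inside $\mathcal H$.

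For Part~2 your decomposition into maximal level runs is exactly the paper's strategy (it analyzes, for each fixed level $h$, the contribution $\Xi_h(\gamma)$ over a sub-path with $h(\gamma_i,\gamma_{i+1})\equiv h$ and $\pi_h(s)=\pi_h(q)$). Two ingredients in the paper make the ``careful bookkeeping'' you allude to go through cleanly and are worth naming. First, one normalizes so that $[\omega_h|_{X_h}]$ generates $H^2(X_h;\Z)$; this uses the surjectivity of $H_T^*(M_h;\Z)\to H^*(X_h;\Z)$ established in Lemma~\ref{fiberonto}, and without it the constants need not land in $A$. Second, the precise $\CP^n$-type cancellation is Lemma~\ref{fake}: for a Hamiltonian $T$-manifold with $H^*(\,\cdot\,;A)\simeq H^*(\CP^N;A)$ and $[\omega]$ a generator, the quotient $\Lambda_p^-\prod_{y\in S}(\psi(p)-\psi(y))^{-1}$ is a product of positive weights times an element of $A_+$ (and in the GKM case, distinct weights in $\Pi_p$ times a unit). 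This handles the denominators $\overline\psi_h(q)-\overline\psi_h(\gamma_i)$ against the fiberwise factor $\widehat\Lambda^-_{q_h}$ of $\Lambda_q^-$. The numerators $(\overline\psi_h(\gamma_{i+1})-\overline\psi_h(\gamma_i))\cdot\alpha_{\gamma_i}(\gamma_{i+1})/\Lambda_{\gamma_{i+1}}^-$ lie in $A$ by Lemma~\ref{int} (integrality of $[\omega_h+\psi_h]$ is used here), and are positive under the stated hypothesis because $\overline\psi_h^\xi$ strictly increases along the edge (by~\eqref{tech}, which you have already established). In the GKM case the translation to $\Theta(r,r')$ and the unit claim go exactly as you indicate, using that $\pi_h$ is weight preserving so the fiber weights embed into $\Pi_q(M_k)$.
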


\begin{remark}\rm
In fact, if $M_k$ is a GKM
space, then our proof demonstrates that 
claim (1) holds whenever
$\rho_j \colon M_{j+1} \to M_j$ is a
weight preserving  map for all $j$;
(see Definition~\ref{preserving}).
\end{remark}

\begin{remark}\labell{discrete tower}\rm
If $M_k$ has a discrete fixed set (or is a GKM space), 
then $M_j$
has a discrete fixed set (or is a GKM space) for all $j$.
To see this, consider any $q \in M_j^T$.  Since the fiber 
$\rho_j^{-1}(q)$ is a Hamiltonian 
$T$-manifold, there exists  $r \in M^T_{j+1}$ such that 
$\rho_j(r) = q$.
Since the differential $d \rho_j$ is surjective, the set of weights in the
representation $T_q M_j$ is a subset of the set of weights in 
$T_r M_{j+1}$.
\end{remark}

Theorem \ref{tower symp} has the following useful corollary.
\begin{corollary}\labell{corollary formula}
Let $(M,\omega,\psi)$ and $\big(\Mt,\ot,\pt\big)$ be Hamiltonian $T$-manifolds 
with discrete fixed sets, and
let $\pi \colon M \to \Mt$ be a
strong symplectic fibration.
Let $\varphi = \psi^\xi$ be a generic component of the moment map.
Assume that 
canonical classes $\alpha_p \in H_T^{2 \lambda(p)}(M;A)$ exist for
all $p \in M^T$.
Fix $p$ and $q$ in $M^T$. 
\begin{enumerate}
\item[1.] There exist canonical
classes $\ah_s \in H_T^{2 \lh(s)}(\Mh_q;A)$
on the fiber $\Mh_q = \pi^{-1}(\pi(q))$
for all $s \in \Mh_q^T$.
\item[2.]
Given $s \in \Mh_q^T$, let $\overline{\Sigma}(p,s)$
denote the set of paths $\gamma=(\gamma_1,\ldots,\gamma_{k+1})$ from $p$ to $s$
in the associated canonical graph $(V,E)$ such that $\pi(\gamma_i) \neq \pi(\gamma_{i+1})$ 
for all $i$.
Then 
\begin{gather*}
\alpha_p(q)= \sum _{s\in \widehat{M}_q^T} 
  \Big( \sum_{\gamma \in \overline{\Sigma}(p,s)} 
P(\gamma) \Big) \ah_s(q) ,
\end{gather*}
where for all $ s \in \Mh_q^T$ and $\gamma \in \overline{\Sigma}(p,s)$, 
\begin{gather*}
P(\gamma) =  \Lt^-_{\pi(s)} \prod_{i=1}^{|\gamma|}
\frac
 {\pt(\pi(\gamma_{i+1}))-\pt(\pi(\gamma_i))}
 {\pt(\pi(s))-\pt(\pi(\gamma_i))}
 \frac{\alpha_{\gamma_i} (\gamma_{i+1})}{\Lambda_{\gamma_{i+1}} ^-} \; . 
\end{gather*}
\item[3.]
Assume that
$H^*\big(\Mt;A\big) \simeq H^*\big(\CP^{\frac{1}{2} \dim(\Mt)};A\big)$ as rings. 
Then for all $s\in \Mh_q^T$ and each path $\gamma\in \overline{\Sigma}(p,s)$
\begin{itemize}
\item $P(\gamma)$
can be written as the product of positive weights in
$\ell^*$ and a constant $C$ in $A$; moreover, $C > 0$ if
$\alpha_{r} (r')$ is positive
for all  $(r,r') \in E$.
\item If $(M,\omega,\psi)$ is a GKM space, then
$P(\gamma)$ can be written as the product of distinct positive weights in 
$\Pi_q(M)$ and a constant $C$ in $A$.
Finally, if $\Theta(r,r') > 0$ for all $(r,r') \in E$, then $C > 0$;
similarly, if $\Theta(r,r') \in A^\times$ for all $(r,r') \in E$,
then $C \in A^\times$.
\end{itemize}
\end{enumerate}
\end{corollary}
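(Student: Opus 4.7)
The plan is to view this corollary as the $k=2$ instance of Theorem~\ref{tower symp}, applied to the tower $M_0 = \text{pt}$, $M_1 = \Mt$, $M_2 = M$ with trivial projection $p_0$ and $p_1 = \pi$. Both projections are strong symplectic fibrations, so the hypotheses of Theorem~\ref{tower symp} are met and we are free to use the formula it provides.

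For Part~1, I would first verify that $(\Mh_q, \omega|_{\Mh_q}, \psi|_{\Mh_q})$ is a Hamiltonian $T$-manifold with discrete fixed set and that $\varphi|_{\Mh_q}$ is generic, since for each $s \in \Mh_q^T$ the weights of $T$ on $T_s \Mh_q$ form a subset of those on $T_s M$. The strong-symplectic-fibration condition yields $\Lambda_s^- = \Lh_s^- \cdot \Lt_{\pi(q)}^-$ for every $s \in \Mh_q^T$, with the second factor independent of $s$. I would then construct $\ah_s$ by induction on $\lh(s)$: start from Kirwan's classes (Lemma~\ref{kirwan}) on $\Mh_q$, which already satisfy (1) and the weaker vanishing (2$'$), and correct them by subtracting appropriate multiples of previously constructed $\ah_{s'}$ (with $\lh(s') < \lh(s)$) to promote (2$'$) to (2). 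The main obstacle here is checking that these corrections have coefficients in $A$ and land in the correct degree; I expect this to follow from Corollary~\ref{GT2} applied inside the fiber.

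For Part~2, I would apply Theorem~\ref{tower symp} Part~1 to the tower above. Since $h(r,r') \in \{1,2\}$ with $h=1$ iff $\pi(r) \neq \pi(r')$, each $\gamma \in C(p,q)$ decomposes uniquely as the concatenation of a base-changing prefix $\gamma' \in \overline{\Sigma}(p,s)$ and a fiber-preserving suffix $\gamma''$ from $s$ to $q$, where $s$ is the first fiber vertex. The contribution $\Xi(\gamma)$ splits as $P(\gamma') \cdot B(\gamma'')$; the prefix factor uses $\pb_1 = \pi^*\pt$ together with $\pi(s) = \pi(q)$, and the suffix factor uses $\pb_2 = \psi$. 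Using the decomposition $\Lambda_{r'}^- = \Lh_{r'}^- \cdot \Lt_{\pi(r)}^-$ on fiber edges, together with the resulting compatibility $\alpha_r(r')/\Lambda_{r'}^- = \ah_r(r')/\Lh_{r'}^-$ for fiber edges (which follows from Theorem~\ref{existence canonical classes} in the GKM case and by a direct comparison in general), summing $B(\gamma'')$ over all fiber-preserving suffixes from $s$ to $q$ reproduces the formula of Theorem~\ref{thm:main} applied to $\ah_s$ on $\Mh_q$ with the choice $w_r = [\omega|_{\Mh_q} + \psi|_{\Mh_q}]$, and hence equals $\ah_s(q)$. Reorganizing the double sum over $(s,\gamma',\gamma'')$ then yields the claimed formula.

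For Part~3, observe that $P(\gamma)$ depends only on base data: the values $\pt(\pi(\gamma_i))$, the Euler class $\Lt_{\pi(s)}^-$, and the edge-ratios $\alpha_{\gamma_i}(\gamma_{i+1})/\Lambda_{\gamma_{i+1}}^-$ (which lie in $A$ by Lemma~\ref{int}). Since $H^*(\Mt;A) \simeq H^*\bigl(\CP^{\frac{1}{2}\dim(\Mt)};A\bigr)$, applying Theorem~\ref{tower symp} Part~2 (equivalently, a direct partial-fractions computation on $\CP^{\frac{1}{2}\dim(\Mt)}$) to the one-step tower $\text{pt} \subset \Mt$ shows that the product of $\Lt_{\pi(s)}^-$ with the $\pt$-quotients equals a product of positive weights of $T_{\pi(s)}\Mt$ times a constant in $A$, with positivity (respectively, the unit property) of the constant controlled by the signs (resp.\ unit property) of the edge-ratios, hence by the stated hypotheses on $\alpha_r(r')/\Lambda_{r'}^-$ or on $\Theta(r,r')$. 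In the GKM case, the isomorphism $T_q M \simeq T_q \Mh_q \oplus T_{\pi(q)}\Mt$ lifts these base weights to distinct elements of $\Pi_q(M)$, and the statements on $\Theta$ follow from Theorem~\ref{existence canonical classes}(3).
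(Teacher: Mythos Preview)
Your approach to Parts~2 and~3 is essentially the paper's: apply Theorem~\ref{tower symp} to the two-step tower $\mathrm{pt}\to\Mt\to M$, split each $\gamma\in C(p,q)$ into a horizontal prefix in $\overline{\Sigma}(p,s)$ and a vertical suffix from $s$ to $q$, and recognize the sum over suffixes as $\ah_s(q)$ via Theorem~\ref{thm:main} (with $w_r=[\omega+\psi]$) applied on $\Mh_q$. Part~3 then reduces to statements~(a1) and~(b1) established in the proof of claim~2 of Theorem~\ref{tower symp}. The compatibility $\alpha_r(r')/\Lambda_{r'}^-=\ah_r(r')/\Lh_{r'}^-$ on fiber edges that you need follows from uniqueness of canonical classes, so this part is fine regardless of how $\ah_s$ is built.

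Your Part~1, however, has a real gap. Starting from Kirwan classes on $\Mh_q$ and subtracting multiples of previously constructed $\ah_{s'}$ with $\lh(s')<\lh(s)$ does not in general yield a canonical class: the obstruction is a point $r\neq s$ in $\Mh_q^T$ with $\lh(r)=\lh(s)$ and $\varphi(r)>\varphi(s)$. For such $r$ the Kirwan class may have $\gamma_s(r)\neq 0$, and Corollary~\ref{GT2} only says this value is a constant multiple of $\Lh_r^-$; it does not say the constant is zero. You have no lower-index class to subtract, and subtracting a same-index class $\ah_r$ is circular. This is precisely the obstruction to existence of canonical classes, and your argument never uses the hypothesis that they exist on $M$.

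The paper's route is different: it restricts $\alpha_s$ from $M$ to $\Mh_q$ and shows (Lemma~\ref{cgfsymp}, relying on the Morse--Bott argument in Lemma~\ref{Propb}) that this restriction is divisible by the Euler class $\Lt_{\pi(q)}^-$ of the negative normal bundle to the fiber. The quotient $\ah_s=\alpha_s|_{\Mh_q}/\Lt_{\pi(q)}^-$ then automatically satisfies both conditions for a canonical class on $\Mh_q$, because $\alpha_s$ satisfies them on $M$ and $\lambda-\lh$ is constant on $\Mh_q^T$. This divisibility is not formal; it is where the hypothesis on $M$ actually enters, and it is the step your outline is missing.
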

In Lemma \ref{explicit P},
we give a different explicit description of $P(\gamma)$ 
in the case that $(M,\omega,\psi)$ is a GKM space
and $H^*(\Mt;A) \simeq H^*(\CP^{\frac{1}{2}\dim \Mt};A)$.

\subsection*{Proof of Claim 1.\ of Theorem~\ref{tower symp}}

We are now ready to begin the proof of the first part of our main theorem.
We will begin with 
the special case of GKM spaces, where
the proof is easier and
the main ideas are more transparent. 
However, the proof in the general case 
on page \pageref{tgc}
is self-contained;
the reader may skip directly to that case.

\subsubsection*{The case of GKM spaces}

Let $(M, \omega,\psi)$ and $\big(\Mt,\ot,\pt\big)$ be  GKM spaces, and
let $(V,\gkme)$ and 
$(\widetilde{V},\gkmet)$ be the associated GKM graphs.
If $\pi \colon M\r \Mt$ is an equivariant map,
the following statements hold:
\begin{itemize}
\item Given a vertex $p \in V$, $\pi(p) \in \widetilde{V}$.
\item Given an edge $e = (p,q) \in \gkme$, either $\pi(p) = \pi(q) \in \widetilde{V}$ or 
$\pi(e) = (\pi(p),\pi(q)) \in \gkmet$
and $\eta(\pi(e))$ is a 
non-zero
multiple of $\eta(e)$.
\end{itemize}
To see this, let $K \subset T$ be the maximal subgroup so
that $p$ and $q$ are contained in the same connected component $N \subset M^K$.
Since  $\pi$ is equivariant, either $\pi(N)$ is a fixed point in $\Mt$, 
or $\pi(N)$ is the connected
component of $\Mt^{K'}$ 
for some subgroup $K' \varsubsetneq T$ which contains $K$.

\begin{definition}\labell{def:horizontal}
We will say that an edge $(p,q) \in (V,\gkme)$ is {\bf horizontal}
(with respect to $\pi$) if $\pi(p) \neq \pi(q)$; moreover, we 
will say that a path $\gamma$ in $(V,\gkme)$ is {\bf horizontal} 
if all its edges are horizontal. 
\end{definition}

If  $\pi \colon M \to \Mt$ is an equivariant fiber bundle
and  $e \in \gkme$ is a horizontal  edge, then $\eta(e)=\pm\eta(\pi(e))$.
However, this need not hold  for arbitrary equivariant maps.

\begin{definition}\labell{preserving}
We will say that a map
$\pi \colon M \to \Mt$ is {\bf weight preserving} 
if it is equivariant and $\eta(e) = \eta(\pi(e))$
for all horizontal edges $(p,q) \in \gkme$. 
\end{definition}

Note that the composition of two weight preserving  maps
is itself weight preserving.  In contrast, the composition
of two  strong symplectic fibrations may not be
a strong symplectic fibration; indeed, it may not
have symplectic fibers.
However, the following assertion is clear; cf. Remark~\ref{essf}.

\begin{lemma}\labell{GKMlambda}
Let $(M,\omega,\psi)$ and $\big(\Mt,\ot,\pt\big)$ be GKM spaces.
If $\pi \colon M \to \Mt$ is a strong symplectic fibration then
$\pi$ is weight preserving.
\end{lemma}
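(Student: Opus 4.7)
My strategy will be to use condition (2) in the definition of a strong symplectic fibration to split the isotropy weights at each fixed point into ``vertical'' and ``horizontal'' parts, and then use the GKM rigidity of invariant $2$-spheres to match each horizontal edge weight of $M$ with an edge weight of $\Mt$. Fix a horizontal edge $(p,q)\in\gkme$, with associated codimension-one subgroup $K\subset T$ and invariant $2$-sphere $N\subset M^K$ through $p,q$, so that $\eta(p,q)$ is the weight of $T$ on $T_qN\subset T_qM$. Condition (2) gives $(T_qM,\omega)\simeq(T_q\Mh_q,\omega|_{\Mh_q})\oplus (T_{\pi(q)}\Mt,\ot)$ as symplectic $T$-representations, and since weights at $q$ are pairwise linearly independent (GKM), this forces a disjoint decomposition of weight sets $\Pi_q(M)=\Pi_q(\Mh_q)\sqcup\Pi_{\pi(q)}(\Mt)$ with each weight space in $T_qM$ of complex dimension one. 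Consequently the line $T_qN$ lies entirely in one of the summands $T_q\Mh_q$ or $H_q$, where $H_q$ is the symplectic perpendicular to $T_q\Mh_q$.

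The first real step will be to rule out the vertical case $\eta(p,q)\in\Pi_q(\Mh_q)$. If this occurred, there would be an invariant $2$-sphere $N'\subset\Mh_q$ through $q$ with $T_qN'$ of weight $\eta(p,q)$; both $N$ and $N'$ would then coincide with the connected component of $M^{\ker\eta(p,q)}$ through $q$, since by GKM the stabilizer of a generic point of any invariant $2$-sphere through $q$ is pinned down by the tangent weight at $q$. This would force $N\subset\Mh_q$, hence $\pi(N)=\{\pi(q)\}$, contradicting horizontality. Therefore $\eta(p,q)\in\Pi_{\pi(q)}(\Mt)$, so $T_qN\subset H_q$, and $d\pi_q$ maps $T_qN$ equivariantly and isomorphically onto the weight line in $T_{\pi(q)}\Mt$ of weight $\eta(p,q)$.

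To conclude I will identify this weight line with $T_{\pi(q)}N_0$, where $N_0\subset\Mt$ is the connected component of $\Mt^{\ker\eta(p,q)}$ through $\pi(q)$ -- that is, the invariant $2$-sphere of $\Mt$ representing the edge with weight $\eta(p,q)$ at $\pi(q)$. By $T$-equivariance, $\pi(N)$ is connected, contained in $\Mt^{\ker\eta(p,q)}$, and contains $\pi(q)$, so $\pi(N)\subset N_0$. Since $\pi(p)$ is a $T$-fixed point of $N_0$ distinct from $\pi(q)$, it must be the other endpoint of $N_0$, and so by definition of the GKM graph of $\Mt$, $\eta(\pi(p),\pi(q))=\eta(p,q)$. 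The main subtlety I anticipate is the uniqueness statement used in the vertical step: an invariant $2$-sphere through a $T$-fixed point is determined by its tangent weight there, which requires a small argument identifying the stabilizer of a generic point of the sphere with the kernel of that weight, using pairwise linear independence.
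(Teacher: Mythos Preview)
Your proposal is correct and is essentially a fully spelled-out version of what the paper treats as immediate: the paper simply asserts the lemma is ``clear; cf.\ Remark~\ref{essf},'' relying on the already-stated fact that for an equivariant fiber bundle $\eta(e)=\pm\eta(\pi(e))$ for every horizontal edge, together with Remark~\ref{essf}, which says that for a strong symplectic fibration the weights on $H_q$ and on $T_{\pi(q)}\Mt$ agree exactly rather than just up to sign. Your decomposition $\Pi_q(M)=\Pi_q(\Mh_q)\sqcup\Pi_{\pi(q)}(\Mt)$, your elimination of the vertical case via uniqueness of the GKM sphere through $q$ with a given tangent weight, and your identification of $\pi(N)$ with the edge-sphere $N_0$ in $\Mt$ are precisely the details underlying the paper's one-line justification.

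One minor point worth tightening: when you write $\pi(N)\subset\Mt^{\ker\eta(p,q)}$, the cleanest justification is to first observe that $\ker\eta(p,q)$ acts trivially on $N$ (since it acts trivially on $T_qN$ and hence, by local linearization at $q$, on a neighborhood of $q$ in $N$, and thus on all of $N$); then equivariance of $\pi$ gives the inclusion directly. This also handles the ``subtlety'' you flag about identifying the generic stabilizer of $N$ with $\ker\eta(p,q)$.
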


To prove Claim 1.,
we need to check that the pull-back of a symplectic form and moment map
by a weight preserving  map satisfies criterion \eqref{tech}
of Corollary~\ref{height}.
We will do this in two steps.

\begin{lemma}\labell{increasing path}
Let $(M,\omega,\psi)$ and $(\Mt,\ot,\pt)$ be GKM spaces, and let 
$\pi\colon M \to \Mt$ be a weight preserving  map. 
Let $\varphi = \psi^\xi$ 
be a generic component of the 
moment map.
Given a horizontal edge $(p,q)$ in the GKM graph 
associated to $M$,
$$
\psi^{\xi}(q)-\psi^{\xi}(p)>0\quad\mbox{if and only if}
\quad\pt^{\xi}(\pi(q))-\pt^{\xi}(\pi(p))>0\;.
$$
\end{lemma}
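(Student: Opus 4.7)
The plan is to reduce both inequalities to a single statement about the sign of $\langle \eta(p,q),\xi\rangle$, where $\eta(p,q)$ is the weight labelling the edge $(p,q)$ in the GKM graph of $M$. Since nothing beyond the basic GKM structure and the definition of ``weight preserving'' is needed, the argument should be short.

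First I would recall the observation made just after the definition of the GKM graph: for every edge $(p,q)\in\gkme$ the difference $\psi(q)-\psi(p)$ is a positive real multiple of $\eta(p,q)$. Pairing with $\xi$ gives
\[
\psi^{\xi}(q)-\psi^{\xi}(p) \;=\; c\,\langle\eta(p,q),\xi\rangle
\]
for some $c>0$, so the sign of the left-hand side coincides with the sign of $\langle\eta(p,q),\xi\rangle$ (which is nonzero by the genericity of $\xi$).

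Next, since $(p,q)$ is horizontal, by the bulleted remark about equivariant maps between GKM graphs the image $(\pi(p),\pi(q))$ is an edge of $\gkmet$. Applying the same observation in $\Mt$ yields
\[
\pt^{\xi}(\pi(q))-\pt^{\xi}(\pi(p)) \;=\; \widetilde c\,\langle\eta(\pi(p),\pi(q)),\xi\rangle
\]
for some $\widetilde c>0$. Because $\pi$ is weight preserving, $\eta(\pi(p),\pi(q))=\eta(p,q)$, so the two expressions $\psi^{\xi}(q)-\psi^{\xi}(p)$ and $\pt^{\xi}(\pi(q))-\pt^{\xi}(\pi(p))$ are positive scalar multiples of the same real number $\langle\eta(p,q),\xi\rangle$, and hence have the same sign. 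This gives both directions of the equivalence simultaneously.

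There is no real obstacle here beyond carefully unwinding the definitions; the only subtle point is that the ``positive multiple'' statement for $\psi(q)-\psi(p)$ depends on our moment map sign convention, which has already been fixed in the paper, and that weight preservation is exactly the hypothesis needed to identify the two edge labels on the nose (rather than merely up to a nonzero scalar, which would still suffice for the sign comparison but is not what ``weight preserving'' delivers).
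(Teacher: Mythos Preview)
Your proof is correct and follows essentially the same approach as the paper: both reduce the question to the fact that $\psi(q)-\psi(p)$ and $\pt(\pi(q))-\pt(\pi(p))$ are positive multiples of the same weight $\eta(p,q)=\eta(\pi(p),\pi(q))$, the latter equality coming from weight preservation. Your version is slightly more explicit in introducing the constants $c,\widetilde c$ and pairing with $\xi$, but the idea is identical.
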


\begin{proof}
Since $(p,q)$ is a horizontal edge and
$\pi$ is a weight preserving map,
$\eta(\pi(p),\pi(q))
= \eta(p,q)$.  Therefore,
$\psi^{\xi}(q)-\psi^{\xi}(p)$ and $\pt^{\xi}(\pi(q))-\pt^{\xi}(\pi(p))$
are both positive multiples of  $\eta(p,q)$.
\end{proof}

\begin{lemma}\labell{GKMb'}
Let $(M,\omega,\psi)$
and $\big(\Mt,\ot,\pt\big)$  be GKM spaces,
and let  $\pi \colon M \to \Mt$ be a  weight preserving  map.
Let $\varphi = \psi^\xi $ 
be a generic component of the moment map.
Given a canonical class
$\alpha_p \in H^{2 \lambda(p)}_T(M;A)$ at  $p \in M^T$,
$$\alpha_p(q) = 0 \quad \mbox{for all } q \in M^T \mbox{ such that } \pi(q) \neq \pi(p)
\mbox{ and } \pt^\xi(\pi(q)) \leq \pt^\xi(\pi(p)).$$
\end{lemma}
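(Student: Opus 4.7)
The plan is to argue by contraposition using Lemma~\ref{GT1GKM}, which tells us that if $\alpha_p(q) \neq 0$ then there must exist an ascending path from $p$ to $q$ in the GKM graph $(V, \gkme)$. The idea is then to push such a path down to $\Mt$ via $\pi$ and use the weight preserving hypothesis to show that the moment map component $\pt^\xi$ must strictly increase along the image, forcing $\pt^\xi(\pi(q)) > \pt^\xi(\pi(p))$.

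More precisely, assume for contradiction that $\alpha_p(q) \neq 0$, $\pi(q) \neq \pi(p)$, and $\pt^\xi(\pi(q)) \leq \pt^\xi(\pi(p))$. By Lemma~\ref{GT1GKM} applied to $\alpha_p$, there exists an ascending path $\gamma = (\gamma_1, \dots, \gamma_{|\gamma|+1})$ in $(V,\gkme)$ from $p$ to $q$; that is, $\gamma_1 = p$, $\gamma_{|\gamma|+1} = q$, each $(\gamma_i, \gamma_{i+1}) \in \gkme$, and $\varphi(\gamma_i) < \varphi(\gamma_{i+1})$ for all $i$. For each edge $(\gamma_i, \gamma_{i+1})$ of $\gamma$, by the general dichotomy for equivariant maps between GKM graphs recalled just before Definition~\ref{def:horizontal}, either $\pi(\gamma_i) = \pi(\gamma_{i+1})$ (non-horizontal) or $(\gamma_i, \gamma_{i+1})$ is a horizontal edge.

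In the non-horizontal case, $\pt^\xi(\pi(\gamma_i)) = \pt^\xi(\pi(\gamma_{i+1}))$ trivially. In the horizontal case, Lemma~\ref{increasing path} applies directly: since $\varphi(\gamma_{i+1}) - \varphi(\gamma_i) > 0$, we get $\pt^\xi(\pi(\gamma_{i+1})) - \pt^\xi(\pi(\gamma_i)) > 0$. Thus $\pt^\xi \circ \pi$ is weakly increasing along $\gamma$, and strictly increasing on horizontal edges. Since $\pi(p) \neq \pi(q)$, at least one edge of $\gamma$ must be horizontal, so summing the inequalities yields $\pt^\xi(\pi(q)) > \pt^\xi(\pi(p))$, contradicting our standing assumption.

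The main (mild) obstacle is simply to make sure one is invoking the right ``ascending path'' statement: the relevant fact is Lemma~\ref{GT1GKM} for the full GKM graph $\gkme$, not the canonical subgraph $E$, since we need to include edges along which the index may jump by more than one. Once that is in place, the weight preserving hypothesis does all the work through Lemma~\ref{increasing path}, and no further analysis of the structure of $\alpha_p$ is needed.
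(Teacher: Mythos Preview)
Your proof is correct and follows essentially the same approach as the paper's: use Lemma~\ref{GT1GKM} to produce an ascending path in $(V,\gkme)$, apply Lemma~\ref{increasing path} on horizontal edges to see that $\pt^\xi\circ\pi$ is weakly increasing along the path and strictly increasing on at least one edge when $\pi(p)\neq\pi(q)$. Your write-up is slightly more detailed in spelling out the contradiction, but the argument is the same.
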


\begin{proof}
Assume that $\alpha_p(q) \neq 0$ for some $q \in M^T$.
By Lemma~\ref{GT1GKM}, there exists an 
ascending path $\gamma$ from $p$ to $q$ in $(V,\gkme)$.
By Lemma~\ref{increasing path} and the definition of ascending,
$$\pt^\xi(\pi(\gamma_i)) < \pt^\xi(\pi(\gamma_{i+1})) \quad
\mbox{or} \quad \pi(\gamma_i) = \pi(\gamma_{i+1}) \quad
\mbox{for each } i.$$
\end{proof}

We are now ready to prove Claim 1.
Let
$\w_j = \pi_j^*(\omega_j + \psi_j)
\in H_T^2(M_k;\R)$
for each $j \in \{1,\dots,k\}$.
Since $\pi_k = \id_{M_k}$, it is obvious that $\w_k(r) \neq \w_k(r')$
for all $(r,r') \in \gkme$.
By Lemma~\ref{GKMlambda},  each $\rho_i$ is a weight preserving map,
and so
$\pi_j$ is a 
weight preserving  map for all $j$. 
Therefore, 
in the case of GKM spaces,
Claim 1.\ of Theorem~\ref{tower symp} 
is an immediate consequence of Corollary~\ref{height}
and Lemma~\ref{GKMb'}.

\subsubsection*{The general case}
\labell{tgc}

The proof in the general case is nearly 
identical,
except that it takes more work
to prove Lemma~\ref{sympb}, the analog of Lemma~\ref{GKMb'}.

\begin{lemma}\labell{omegat}
Let $(M,\omega,\psi)$ and $\big(\Mt,\ot,\pt\big)$ be
Hamiltonian $T$-manifolds, and let $\pi \colon M \to \Mt$
be a equivariant fiber bundle with symplectic fibers.
For all sufficiently small $t > 0$,
\begin{enumerate}
\item the two-form $\omega_t = \pi^*(\ot) + t \omega$ is 
symplectic; moreover,
\item as
symplectic representations $(T_p M, \omega_t) \simeq
(T_p \Mh_p, \omega|_{\Mh_p}) \oplus (T_{\pi(p)} \Mt,\ot)$ 
for all $p \in M^T$,
where $\Mh_p$ denotes the fiber $\pi^{-1}(\pi(p))$.
\end{enumerate}
\end{lemma}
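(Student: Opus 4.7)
Both claims are controlled by a single geometric decomposition of $TM$. Since $\ot$ and $\omega$ are closed, so is $\omega_t$ for every $t$, and only non-degeneracy remains for (1). At each $x\in M$, let $V_x = \ker d\pi_x = T_x\Mh_{\pi(x)}$. The symplectic-fibers hypothesis says $\omega|_{V_x}$ is non-degenerate, so $V_x$ is a symplectic subspace of $(T_xM,\omega)$ and admits a symplectic perpendicular $H_x := V_x^\omega$, giving a smooth splitting $TM = V \oplus H$ in which $d\pi|_{H_x} \colon H_x \to T_{\pi(x)}\Mt$ is a linear isomorphism.

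Because $\pi^*\ot$ annihilates every vertical vector, $\omega_t|_{V_x} = t\,\omega|_{V_x}$, the subspaces $V_x$ and $H_x$ are $\omega_t$-orthogonal, and $\omega_t|_{H_x} = \pi^*\ot|_{H_x} + t\,\omega|_{H_x}$. The first expression is symplectic for every $t > 0$; the third is a small perturbation of the non-degenerate form $\pi^*\ot|_{H_x}$, so by compactness of $M$ and continuity in $t$ there is a uniform $\varepsilon > 0$ such that $\omega_t|_{H_x}$ is non-degenerate at every $x \in M$ for all $t \in (0,\varepsilon]$. Combined with the $\omega_t$-orthogonality of the two factors, this proves (1).

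For (2), fix $p \in M^T$. The subspace $V_p$ is $T$-invariant because $\Mh_p$ is a $T$-invariant submanifold (the fibre over the fixed point $\pi(p)$), and $H_p = V_p^\omega$ is $T$-invariant because $\omega$ is. So the splitting above is $T$-equivariant at $p$ and realises $(T_pM,\omega_t)$ as $(V_p, t\omega|_{V_p}) \oplus (H_p, \omega_t|_{H_p})$ as $T$-symplectic representations. The rescaling $v \mapsto \sqrt{t}\,v$ gives an equivariant symplectomorphism $(V_p, t\omega|_{V_p}) \simeq (V_p, \omega|_{V_p}) = (T_p\Mh_p, \omega|_{\Mh_p})$. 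For the second summand, $d\pi|_{H_p}$ is an equivariant symplectomorphism from $(H_p, \pi^*\ot|_{H_p})$ to $(T_{\pi(p)}\Mt, \ot)$, so it remains to identify $(H_p, \omega_t|_{H_p})$ with $(H_p, \pi^*\ot|_{H_p})$ for small $t > 0$; this is the sole point I expect to require care.

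I would handle it by noting that on the fixed $T$-module $H_p$, the equivariant symplectomorphism class of a $T$-invariant symplectic form is a discrete invariant: the form is determined up to equivariant linear symplectomorphism by which isotypic weight pairs $(H_p)_\eta$, $(H_p)_{-\eta}$ it pairs non-trivially, and non-triviality of these pairings is preserved under sufficiently small perturbations. Equivalently, one may run an equivariant linear Moser argument along the straight-line family $s \mapsto \pi^*\ot|_{H_p} + s\cdot t\,\omega|_{H_p}$, $s \in [0,1]$: once $t$ is small enough that every member of this family is non-degenerate (again by compactness applied at $p$), solving the linear homological equation $\omega_s(X_s\,\cdot,\cdot) = -\dot\omega_s$ equivariantly produces the required equivariant symplectomorphism, completing the proof of (2).
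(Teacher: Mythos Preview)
Your proof is correct and follows essentially the same route as the paper: both use the vertical/horizontal splitting $TM = V \oplus H$ with $H = V^{\omega}$, observe that $V$ and $H$ are $\omega_t$-orthogonal since $\pi^*\ot|_V = 0$, and then argue that $\omega_t|_H$ is a small perturbation of the non-degenerate form $\pi^*\ot|_H$. The only difference is that where the paper simply asserts that ``analogous statements hold for $\omega_t$ for all sufficiently small $t>0$'' (relying implicitly on the discreteness of the equivariant symplectic isomorphism type), you spell this out via a weight-pairing argument or an equivariant linear Moser deformation; this is a welcome elaboration rather than a different method.
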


\begin{proof}

Let $V \subset TM$ be the kernel of the map $d \pi \colon
TM \to T\Mt$.
By assumption, $\pi$ is a submersion;
hence, $V \subset TM$ is a subbundle.
Since  we have assumed that $\pi $ has symplectic fibers,
the restriction $\omega|_{V}$ is  symplectic.
Since $\pi^*(\ot)|_{V} = 0$, this
implies that the restriction $\omega_t|_{V} = t \omega|_{V}$
is symplectic and that 
$(V_p,\omega_t) \simeq (V_p,\omega)$ 
for all $p \in M^T$ and  $t > 0$.

Let $H = V^{\omega} \subset TM$ be the symplectic perpendicular
to $V$ with respect to $\omega$.
Since  $\pi^*(\ot)|_V = 0$, $H$
is also symplectically perpendicular to $V$ with respect
to $\omega_t$ for all $t \geq 0$.
Moreover, since $\omega|_{V}$ is symplectic,
$H \subset TM$ is a subbundle and
$TM = V \oplus H$.
Thus, the map $d \pi \colon H \to T\Mt$ is an isomorphism.
Since $\ot$ is symplectic, this implies that
the restriction  $\pi^*(\ot)|_H$ is symplectic and
that  $(H_p, \pi^*(\ot)|_{H_p}) \simeq (T_{\pi(p)} \Mt,\ot)$  for all $p \in M^T$.
Since being symplectic is an open condition and $M$ is compact,
analogous statements hold for  $\omega_t$ 
for all sufficiently small $t > 0$.
The claim follows immediately.
\end{proof}

\begin{lemma}\labell{Propb}
Let $(M,\omega,\psi)$  be a Hamiltonian
$T$-manifold with discrete fixed set.
Let $\varphi = \psi^\xi$ be a generic component of the moment map,
and let $\vb \colon M \to \R$ be an invariant Morse-Bott function.
Assume that  for all $\epsilon > 0$
there exists a  symplectic form $\omega' \in \Omega^2(M)$
with moment map $\psi'$ such that:
\begin{itemize}
\item [(a)] $| (\psi')^\xi(x) - \vb(x) | < \epsilon$
for all $x \in M$; and
\item [(b)]  the product of the positive weights for the isotropy action of $T$ on $(T_pM,\omega')$
is
$\Lambda_p^-$
for all $p \in M^T$.
\end{itemize}
If   $\alpha_p \in H_T^{2 \lambda(p)}(M;A)$ is the canonical class 
(with respect to $\varphi$) at $p \in M^T$,
and $\Mh_p$ is the critical component of $\vb$ that
contains $p$, then
\begin{equation*}
\alpha_p(q) = 0 \quad \mbox{for all } q \in M^T \mbox{ so that }
 q \not\in \Mh_p \mbox{ and } \vb(q) \leq \vb(p).
\end{equation*}
Moreover,  the restriction of $\alpha_p$ to $\vb^{-1}(-\infty, \vb(p) - \delta)$
vanishes for all $\delta > 0$.
\end{lemma}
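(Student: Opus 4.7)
The plan is to use hypothesis (b) to transfer the canonical class $\alpha_p$ from $(\omega,\psi)$ to a nearby symplectic form $\omega'$ provided by (a), thereby reducing both assertions to the Morse theory of $(\psi')^\xi$ developed in Section~\ref{s:main}. The key preliminary observation is that for any $\omega',\psi'$ satisfying (b), the class $\alpha_p$ is also the canonical class at $p$ with respect to $(M,\omega',\psi',(\psi')^\xi)$: the defining conditions of canonical class depend only on $\Lambda_p^-$ and the index $\lambda$; hypothesis (b) gives equality of the products directly at every fixed point, while comparing degrees of these products in $\Sym(\td)$ forces the indices $\lambda$ and $\lambda'$ to agree too. (A harmless perturbation of $\xi$ makes it generic for $\omega'$ if needed.)

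I would handle the \emph{moreover} statement first. Fix $\delta>0$, set $\epsilon=\delta/3$, and take the associated $\omega',\psi'$. A short estimate using (a) gives
$$
\vb^{-1}(-\infty,\vb(p)-\delta)\;\subseteq\;U':=\{x\in M\mid (\psi')^\xi(x)<(\psi')^\xi(p)-\epsilon\}.
$$
Now $U'$ is an invariant sublevel set of the Morse function $(\psi')^\xi$; by standard Morse theory it has the equivariant homotopy type of a sub-CW complex built from the fixed points it contains, and in particular the restriction $H_T^*(U';A)\to H_T^*((U')^T;A)$ is injective. The preliminary observation combined with Lemma~\ref{GT1} applied to $(\omega',\psi')$ shows that $\alpha_p$ vanishes at every fixed point in $U'$, and injectivity then forces $\alpha_p|_{U'}=0$.

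The first assertion for $\vb(q)<\vb(p)$ is then immediate: take $\delta=(\vb(p)-\vb(q))/2$ and apply the moreover conclusion. The delicate case is $\vb(q)=\vb(p)$ with $q\notin\Mh_p$, in which $p$ and $q$ lie in disjoint critical components of the Morse-Bott function $\vb$. Here the plan is to exploit the freedom inherent in (a) to select an $\omega'$ satisfying $(\psi')^\xi(q)<(\psi')^\xi(p)$, after which Lemma~\ref{GT1} for $(\omega',\psi')$ once more yields $\alpha_p(q)=0$. Carrying out this tie-breaking step---producing an $\omega'$ within the family guaranteed by (a) that separates the $(\psi')^\xi$-values at $p$ and $q$ in the correct direction, using the Morse-Bott structure of $\vb$ to distinguish the two critical components---is the main obstacle in the proof.
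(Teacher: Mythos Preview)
Your handling of the ``moreover'' statement and of the strict case $\vb(q)<\vb(p)$ is fine and matches the paper's first paragraph. The gap is exactly where you flag it: the equal-level case $\vb(q)=\vb(p)$ with $q\notin\Mh_p$. Your proposed tie-breaking cannot be carried out from the hypotheses as stated. Condition~(a) only asserts that \emph{for each} $\epsilon>0$ there exists \emph{some} $\omega'$ with $|(\psi')^\xi-\vb|<\epsilon$; it gives you no family to choose within and no mechanism for arranging $(\psi')^\xi(q)<(\psi')^\xi(p)$. Nothing rules out that every admissible $\omega'$ has $(\psi')^\xi(q)>(\psi')^\xi(p)$, so Lemma~\ref{GT1} alone cannot separate the two components.

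The paper resolves this case by a different idea: it works with the Morse--Bott theory of $\vb$ itself rather than trying to perturb. Having already established that $\alpha_p$ vanishes on $\vb^{-1}(-\infty,-2\epsilon)$, one uses the local structure near the critical level $\vb(p)$ to build a comparison class $\alpha'_p\in H_T^{2\lambda(p)}$ on $\vb^{-1}(-\infty,2\epsilon)$ which agrees with $\alpha_p$ on $\Mh_p$ but vanishes on every other critical component at that level. One then extends $\alpha'_p$ to $M$ by surjectivity and shows $\alpha_p(r)=\alpha'_p(r)$ for all fixed $r$ with $\varphi'(r)<\epsilon$ by an inductive degree argument using Corollary~\ref{GT2}: at the first $r$ where they differ one would have $\lambda(r)>\lambda(p)$, contradicting the degree. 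Since $\vb^{-1}(-\infty,\vb(p)]\subset(\varphi')^{-1}(-\infty,\epsilon)$, this forces $\alpha_p(q)=\alpha'_p(q)=0$ for $q\notin\Mh_p$ with $\vb(q)=\vb(p)$. The essential ingredient you are missing is this construction of $\alpha'_p$ from the Morse--Bott decomposition of $\vb$ and the comparison via Corollary~\ref{GT2}.
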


\begin{proof}
We may assume without loss of generality that $\vb(p) = 0$.
  By  assumption, for any $\epsilon > 0$
there exists a symplectic form $\omega'$ with moment
map $\psi'$ such that (a) and (b) hold.
Let $\varphi'$ be $(\psi')^{\xi}$.
By (b) it follows that $\alpha_p$ 
is also the canonical class at $p$ with respect to $\varphi'$; so
Lemma~\ref{GT1} implies that
$$ \alpha_p(q) = 0 \quad \mbox{for all }q \in M^T \mbox{ such that }
\varphi'(q) < \varphi'(p).$$
By injectivity, this implies that the restriction of $\alpha_p$
to $(\varphi')^{-1}(-\infty,\varphi'(p))$ vanishes.
Finally, (a) implies that
$$(\vb)^{-1}(-\infty, -2 \epsilon) \subset (\varphi')^{-1}(-\infty,\varphi'(p)),$$
and so the restriction of $\alpha_p$ to $\vb^{-1}(-\infty, - 2 \epsilon)$
vanishes.

Since $\vb$ is a  Morse-Bott function there exists  $\epsilon > 0$
so that $0$ is the only critical value of $\vb$ in 
$[-2 \epsilon, 2 \epsilon]$.  
Since the restriction of $\alpha_p$ to $\vb^{-1}(-\infty, - 2 \epsilon)$
vanishes,
the restriction
of $\alpha_p$ to $\Mh_p$ is a multiple of the equivariant Euler class of the
negative normal bundle  of $\vb$ at $\Mh_p$,
and so there exists
$\alpha'_p \in 
H^{2 \lambda(p)}_T\big((\vb)^{-1}(-\infty,  2 \epsilon);A \big)$
so that 
$\alpha'_p|_{\Mh_p} = \alpha_p|_{\Mh_p}$, but
$\alpha'_p|_C = 0$ for every other critical set $C$ of $\vb$ so
that $\vb(C) \leq 2 \epsilon$.
Moreover, since $\vb$ is invariant and the fixed set is discrete,
every fixed point is critical.
Hence,
\begin{gather}
\labell{Propb5}
\alpha'_p(q) =  \alpha_p(q) \quad \mbox{for all }q \in \Mh_p \cap M^T,
\quad \mbox{and} \\ 
\labell{Propb6}
\alpha_p'(q) = 0 \quad \mbox{for all }q \in M^T
\mbox{ such that } q \not\in \Mh_p \mbox{ and }\vb(q) \leq 2\epsilon. 
\end{gather}

By (a),
$(\varphi')^{-1}(-\infty, \epsilon) \subset (\vb)^{-1}(-\infty, 2\epsilon)$.
Hence,  we can restrict
$\alpha'_p$ to 
$(\varphi')^{-1}(-\infty,\epsilon)$; moreover, this restriction
satisfies \eqref{Propb5}
and 
\begin{equation}
\labell{Propb9}
\alpha_p'(q) = 0 \quad \mbox{for all }q \in M^T
\mbox{ such that } q \not\in \Mh_p \mbox{ and }\varphi'(q) < \epsilon. 
\end{equation}
By surjectivity, we can extend $\alpha'_p$
to a class
(which we still call $\alpha'_p$) on $M$
with the same properties.
Moreover, by the definition of canonical class,
\begin{equation*}
\alpha_p(q) = 0 \quad \mbox{for all }q \in M^T \smallsetminus \{p\} 
\mbox{ such that }\lambda(q) \leq \lambda(p).
\end{equation*}
Therefore, by
\eqref{Propb5} and
\eqref{Propb9}, 
\begin{equation*}
\alpha_p(q) = \alpha'_p(q) \quad \mbox{for all } q \in M^T \mbox{ such that }\varphi'(q) < \epsilon \mbox{ and }
\lambda(q) \leq \lambda(p).
\end{equation*}
Assume that there exists $r \in M^T$ such that 
$\alpha_p(r) \neq \alpha'_p(r)$ and $\varphi'(r) < \epsilon$ but
$\alpha_p(s) = \alpha'_p(s)$ for all $s \in M^T$ such that $\varphi'(s) < \varphi'(r)$.
By the equation above, this implies that $\lambda(r) > \lambda(p)$.
Since $\beta = \alpha_p - \alpha'_p$ has degree $2 \lambda(p)$,
this contradicts Lemma~\ref{GT2}.
Therefore, 
\begin{equation*}
\alpha_p(q) = \alpha'_p(q) \quad \mbox{for all } q \in M^T \mbox{ such that }\varphi'(q) < \epsilon.
\end{equation*}
Finally, since 
$(\vb)^{-1}((-\infty,0]) \subset
 (\varphi')^{-1}(-\infty, \epsilon)$
by (a),
this implies that
\begin{equation*}
\alpha_p(q) = \alpha'_p(q) \quad \mbox{for all } q \in M^T \mbox{ such that }\vb(q) \leq 0.
\end{equation*}
Therefore,  the claim follows immediately from \eqref{Propb6}.
\end{proof}

\begin{lemma}\labell{sympb}
Let $\{(M_j,\omega_j,\psi_j)\}_{j=1}^{k}$ be Hamiltonian
$T$-manifolds with discrete fixed sets
and let $\{\rho_j \colon  M_{j+1}\rightarrow M_j\}_{j=1}^{k-1}$
be strong symplectic fibrations.
Let $\varphi_k = \psi_k^\xi$ be a generic component of the moment map.
Let  $\pi = \rho_1 \circ \rho_2
\circ \dots \circ \rho_{k-1} \colon M_k \to M_1$.
Given a canonical class $\alpha_p \in H_T^{2 \lambda(p)}(M_k;A)$ 
at $p \in M_k^T$,
$$\alpha_p(q) = 0 
\
\mbox{for all } q \in M_k^T 
\mbox{ so that } \pi(q) \neq \pi(p) \mbox{ and } \psi_1^\xi(\pi(q)) \leq
\psi_1^\xi(\pi(p)).$$
\end{lemma}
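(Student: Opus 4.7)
The plan is to verify the hypotheses of Lemma~\ref{Propb} for the $T$-invariant function $\bar\varphi := \psi_1^\xi \circ \pi \colon M_n \to \R$. Since $\xi$ is generic for $M_n$, it is also generic for $M_1$: by Remark~\ref{discrete tower} applied inductively, each weight at a fixed point of $M_1$ occurs as a weight at some fixed point of $M_n$, and pairs nontrivially with $\xi$ by genericity. Hence $\psi_1^\xi$ is a Morse function on $M_1$ with critical set $M_1^T$, and since $\pi = p_1 \circ \cdots \circ p_{n-1}$ is a submersion, $\bar\varphi$ is an invariant Morse--Bott function whose critical components are exactly the fibers $\pi^{-1}(q)$ for $q \in M_1^T$. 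Consequently the critical component through $p$ is $\pi^{-1}(\pi(p))$, and the conclusion of Lemma~\ref{Propb} is precisely the vanishing $\alpha_p(q)=0$ asserted by the lemma.

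To apply Lemma~\ref{Propb}, I must construct, for each $\epsilon>0$, a symplectic form $\omega'$ on $M_n$ with moment map $\psi'$ satisfying conditions (a) and (b) of that lemma. My candidates will come from iterating Lemma~\ref{omegat}. Set $\omega^{(1)} := \omega_1$ and $\psi^{(1)} := \psi_1$ on $M_1$, and for $j = 2,\ldots,n$ put
$$\omega^{(j)} := p_{j-1}^*\big(\omega^{(j-1)}\big) + t_j\,\omega_j, \qquad \psi^{(j)} := p_{j-1}^*\big(\psi^{(j-1)}\big) + t_j\,\psi_j.$$
Each $p_{j-1}$ is a strong symplectic fibration, in particular an equivariant fiber bundle with symplectic fibers, so Lemma~\ref{omegat} lets me choose $t_j>0$ small enough (inductively, depending on the previous choices) that $\omega^{(j)}$ is symplectic with moment map $\psi^{(j)}$. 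Setting $\omega' := \omega^{(n)}$ and $\psi' := \psi^{(n)}$, one unwinds
$$\psi' \;=\; \pi^*(\psi_1) + \sum_{j=2}^{n} t_j\, \pi_j^*(\psi_j), \qquad\text{where } \pi_j := p_j\circ\cdots\circ p_{n-1}\colon M_n\to M_j.$$
Thus $|(\psi')^\xi - \bar\varphi| \leq \sum_{j=2}^{n} t_j\sup_{M_j}|\psi_j^\xi|$, which is less than $\epsilon$ once the $t_j$ are taken sufficiently small, giving~(a).

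For (b), I will compare two decompositions of $T_pM_n$ at each fixed point $p$. Iterating part~(2) of Lemma~\ref{omegat} through the tower identifies $(T_pM_n,\omega')$ as the direct sum of symplectic representations
$$\bigoplus_{j=2}^{n} \Big(T_{\pi_j(p)}\, p_{j-1}^{-1}(\pi_{j-1}(p)),\, \omega_j\big|_{p_{j-1}^{-1}(\pi_{j-1}(p))}\Big)\ \oplus\ \big(T_{\pi_1(p)}M_1,\, \omega_1\big).$$
On the other hand, iterating the strong symplectic fibration property of each $p_j$, starting from $(T_pM_n,\omega_n)$ and peeling off one fiber at a time, produces the same direct sum decomposition of $(T_pM_n,\omega_n)$. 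Therefore the positive $\xi$-weights on $(T_pM_n,\omega')$ coincide with those on $(T_pM_n,\omega_n)$, so their product equals $\Lambda_p^-$, giving~(b). Lemma~\ref{Propb} then completes the proof. The main technical obstacle lies in this last step: one must carefully track the identifications of fibers and the compatibility of symplectic structures stage by stage to see that the two iterated decompositions coincide summand for summand, so that the ``$\Lambda_p^-$'' produced by the construction genuinely agrees with the one attached to $\omega_n$.
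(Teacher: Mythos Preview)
Your proof is correct and follows essentially the same strategy as the paper: verify the hypotheses of Lemma~\ref{Propb} by using Lemma~\ref{omegat} inductively through the tower to build the perturbed symplectic form, and compare the resulting tangent-space decomposition with the one coming from the strong symplectic fibration property. The paper compresses your iterative construction into the phrase ``by Lemma~\ref{omegat} and induction on $n$,'' but the content is the same.
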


\begin{proof}

Given $q \in M_k^T$,
let $q_j  = (\rho_j \circ \rho_{j+1} \circ \dots \circ \rho_{k-1})(q) \in M_j$ 
for all $j$, 
let $X_j = \rho_{j-1}^{-1}(q_{j-1}) \subset M_j$ for $j > 1$,
and let $X_1 = M_1$.
By the definition of  strong symplectic fibration and induction on $k$,
as symplectic representations 
$$
(T_q M_k, \omega_k) \simeq
(T_{q_1} X_1, \omega_1|_{X_1} ) \oplus \dots 
\oplus (T_{q_k} X_k, \omega_k|_{X_k}).$$
Therefore, by Lemma~\ref{omegat} and induction on $k$,
for any $\epsilon > 0$ 
there exists a symplectic form $\omega_k' \in \Omega^2(M_k)$ 
with moment map
$\psi_k'$ such that:
\begin{itemize}
\item $\big| (\psi_k')^\xi(x) - \pi^*(\psi_1)^\xi(x) \big| < \epsilon$
for all $x \in M_k$; and
\item  
as symplectic
representations  $(T_q M_k,\omega_k) \simeq (T_q M_k, \omega_k')$ 
for all $q \in M_k^T$.
\end{itemize}

Since $M_1$ has a discrete fixed set,
$\psi_1^\xi \colon M_1 \to \R$ is a Morse function on $M_1$
with critical set $M_1^T$.
Since $\pi$ is an equivariant fiber bundle, this
implies that  $\pi^*(\psi_1)^\xi$ 
is an invariant Morse-Bott function on $M$, 
and that the critical component  of $\pi^*(\psi_1)^\xi$
that contains  $p \in M_k^T$ is the fiber $\pi^{-1}(\pi(p))$. 
Therefore,  the claim follows from
Lemma~\ref{Propb}.
\end{proof}

We are now ready to prove Claim 1.
Let 
$\w_j = [\pi_j^*(\omega_j + \psi_j)]
\in H_T^2(M_k;\R)$
for each $j \in \{1,\dots,k\}$.
Since $\pi_k = \id_{M_k}$, Lemma~\ref{GT1} implies 
that $\w_k(r) \neq \w_k(r')$
for all $(r,r') \in E$.
Therefore, Claim 1.\ of Theorem~\ref{tower symp} 
is an immediate consequence of Corollary~\ref{height}
and Lemma~\ref{sympb}.

\subsection*{Proof of Claim 2.\ of Theorem~\ref{tower symp}}

Let a 
maximal
subtorus 
$T \subset SU(n+1)$ 
act on $(\CP^n,\omega)$,
and let $\varphi$
be a generic component of
the  moment 
map 
$\psi \colon \CP^n \to \td$.
If $[\omega]$ generates $H^2(\CP^n;\Z)$ then 
$$\Lambda_p^- = \prod_{\varphi(y) < \varphi(p)} \psi(p) - \psi(y),$$
where the sum is over all $y\in (\CP^n)^T$ such that 
$\varphi(y)<\varphi(p)$.
The next lemma, which is the key ingredient in the proof of Claim 2.\ of
Theorem~\ref{tower symp} and Claim 3.\ 
of Corollary~\ref{corollary
formula}, generalizes this fact to  other manifolds with 
isomorphic cohomology rings.

\begin{definition}\labell{magnitude}
Fix a GKM space
$(M,\omega,\psi)$ 
with GKM graph $(V,\gkme)$. The {\bf magnitude}
of an edge $(r,s)\in \gkme$  
is $$m(r,s)=\displaystyle\frac{\psi(s)-\psi(r)}{\eta(r,s)}\;.$$
\end{definition}

\begin{lemma}\labell{fake}
Let $(M,\omega,\psi)$ be a Hamiltonian $T$-manifold with discrete fixed 
set, and let
$\varphi = \psi^\xi$ be a generic component of the moment map.
Assume that $[\omega]$ generates $H^2(M;\Z)$ and that 
$H^*(M;A)\simeq H^*\big(\CP^{\frac{1}{2} \dim M};A\big)$ as rings. Given $p\in M^T$, fix
a subset $S \subset \{y\in M^T\ \mid \varphi(y)<\varphi(p)\}$. Then
\begin{itemize}
\item
$\Lambda_p^-\prod _{y\in S}\frac{1}{\psi(p)-\psi(y)}$ 
can be written as the product of positive weights in $\ell^*$ and a constant in $A_+$.
\item 
If
 $(M,\omega,\psi)$ is a GKM space with GKM graph $(V,\gkme)$,  
then 
$m(r,s)$ is a unit in $A_+$ for all $(r,s)\in \gkme$ such that $\varphi(r)<\varphi(s)$. In particular
$\Lambda_p^-\prod _{y\in S}\frac{1}{\psi(p)-\psi(y)}$ 
can be written as the product of distinct positive weights in $\Pi_p(M)$
and a unit in $A_+$.
\end{itemize}
\end{lemma}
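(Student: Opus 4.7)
The plan is to reduce both bullets to establishing the identity
\begin{equation*}
\Lambda_p^- = c_p \prod_{y \in S_p}(\psi(p)-\psi(y)), \quad c_p \in A_+^\times,  \qquad (\star)
\end{equation*}
where $S_p = \{y \in M^T : \varphi(y) < \varphi(p)\}$. Once $(\star)$ is in hand, the first bullet follows by dividing through by $\prod_{y \in S}(\psi(p)-\psi(y))$: each surviving factor $\psi(p)-\psi(y)$ lies in $\ell^*$ and has positive $\xi$-pairing (since $\varphi(y)<\varphi(p)$), so it is a positive weight in $\ell^*$, and $c_p \in A_+^\times \subset A_+$.

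To prove $(\star)$, set $n = \frac{1}{2}\dim M$. Since $H^*(M;A) \cong A[x]/(x^{n+1})$ and the Hamiltonian action is equivariantly formal, $\varphi$ is a perfect Morse function with exactly one critical point of each index $0,2,\dots,2n$. Let $u = [\omega+\psi] \in H_T^2(M;A)$, so $u(q)=\psi(q)$ on fixed points, and set
\begin{equation*}
\beta_p = \prod_{y \in S_p}(u-\psi(y)) \in H_T^{2|S_p|}(M;A).
\end{equation*}
Then $\beta_p(y)=0$ for $y \in S_p$, while $\beta_p(p) = \prod_{y \in S_p}(\psi(p)-\psi(y)) \neq 0$. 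By Corollary~\ref{GT2} applied with the Kirwan classes $\{\gamma_q\}$ of Lemma~\ref{kirwan}, $\beta_p = \sum_{\varphi(q)\geq \varphi(p)} x_q\gamma_q$; evaluating at $p$ every term with $q\neq p$ vanishes, yielding
\begin{equation*}
\prod_{y \in S_p}(\psi(p)-\psi(y)) = x_p\Lambda_p^-, \qquad x_p \in H^{2|S_p|-2\lambda(p)}(BT;A). \qquad (\star\star)
\end{equation*}
Nonvanishing of the left side forces $|S_p| \geq \lambda(p)$; summing over $p$ and comparing with $\sum_p\lambda(p) = \binom{n+1}{2} = \sum_p|S_p|$ gives equality $|S_p|=\lambda(p)$, so $x_p \in A$. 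Applying $(\star\star)$ both with $\xi$ and with $-\xi$ (which swaps $S_p$ with $M^T\setminus(S_p\cup\{p\})$ and $\Lambda_p^-$ with $\Lambda_p^+ := \prod_{\eta\in\Pi_p,\langle\eta,\xi\rangle<0}\eta$) and multiplying gives $\prod_{y\neq p}(\psi(p)-\psi(y)) = x_p x_p^{(-\xi)} \Lambda_p^-\Lambda_p^+$. On the other hand, ABBV localization applied to $\prod_{y\neq p}(u-\psi(y))$ picks up only the $q=p$ contribution (every $q\neq p$ produces the vanishing factor $\psi(q)-\psi(q)$) and its value equals the non-equivariant $\int_M[\omega]^n=1$, so $\prod_{y\neq p}(\psi(p)-\psi(y)) = \Lambda_p^-\Lambda_p^+$. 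Hence $x_p\cdot x_p^{(-\xi)} = 1 \in A^\times$. Since both sides of $(\star\star)$ are products of $\lambda(p)$ linear forms with positive $\xi$-pairing, $x_p>0$; likewise $x_p^{(-\xi)}>0$. Therefore $x_p \in A_+^\times$, and $c_p := x_p^{-1}$ proves $(\star)$.

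For the second bullet, in the GKM case each of the $n+1$ vertices has $n$ pairwise linearly independent weights, so the simple GKM graph is $n$-regular on $n+1$ vertices and hence complete. To show $m(r,s) \in A_+^\times$ for each edge $(r,s)$ with $\varphi(r)<\varphi(s)$, set $p=s$, $y_0=r$, and choose generic $\xi_1,\xi_2 \in \ft$ in chambers of the weight arrangement at $p$ adjacent across the hyperplane $\{\langle \eta(y_0,p),\cdot\rangle=0\}$, so that $S_p$ for $\xi_1$ and $\xi_2$ differ precisely by inclusion or exclusion of $y_0$. Applying $(\star)$ to both values: on the $\Lambda^-$ side the ratio is exactly the extra factor $\eta(y_0,p)$, while on the product side it is $\psi(p)-\psi(y_0) = m(y_0,p)\eta(y_0,p)$. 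This yields $m(y_0,p) = c_p^{(\xi_2)}/c_p^{(\xi_1)} \in A_+^\times$. Substituting $\psi(p)-\psi(y) = m(y,p)\eta(y,p)$ into the first bullet's formula expresses the answer as a product of distinct positive weights in $\Pi_p(M)$ (distinct by pairwise linear independence) times a unit in $A_+$, proving the second bullet.

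The main obstacles are: (1) the combinatorial identity $|S_p|=\lambda(p)$ is not local but must be extracted by combining $(\star\star)$ with the global sum constraint $\sum_p|S_p|=\sum_p\lambda(p)=\binom{n+1}{2}$; and (2) in the GKM step, verifying that chambers $\xi_1,\xi_2$ with the required separation property always exist in the weight arrangement at $p$.
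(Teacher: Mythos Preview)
Your argument is essentially correct but takes a different, more elaborate route than the paper at nearly every step; one small slip needs fixing.

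\textbf{The slip.} You assert $\int_M[\omega]^n=1$. The hypotheses only give $H^*(M;A)\simeq H^*(\CP^n;A)$, not the analogous isomorphism over $\Z$, so $[\omega]^n$ need only generate $H^{2n}(M;A)$, not $H^{2n}(M;\Z)$. Thus $\int_M[\omega]^n$ is a positive unit in $A$, not necessarily $1$. This does not break your argument: you still get $x_p\cdot x_p^{(-\xi)}\in A_+^\times$, hence $x_p\in A_+^\times$. (You also implicitly use that $[\omega+\psi-\psi(y)]\in H_T^2(M;\Z)$, which the paper cites from \cite{LT}; this is what ensures $x_p\in\Z$ and that each $\psi(p)-\psi(y)\in\ell^*$.)

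\textbf{Comparison.} The paper reaches $(\star)$ more directly. First, instead of your counting argument for $|S_p|=\lambda(p)$, it cites an external result (\cite[Proposition~3.4]{T}) giving $\varphi(y)<\varphi(p)\Leftrightarrow\lambda(y)<\lambda(p)$; your self-contained counting is a nice alternative. Second, to show $x_p\in A^\times$ the paper simply restricts $\beta=x_p\gamma_p$ to ordinary cohomology: the image is $[\omega]^{\lambda(p)}=x_p\tilde\gamma_p$, and since $[\omega]^{\lambda(p)}$ generates $H^{2\lambda(p)}(M;A)$ by hypothesis, $x_p$ is a unit. This is shorter than your ABBV/$(-\xi)$ detour. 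Third, and most notably, for the GKM bullet the paper observes that in the complete graph $x_p=\prod_{\varphi(y)<\varphi(p)}m(y,p)$; since this product of positive integers is a unit in $A$, each factor $m(y,p)$ is automatically a unit in $A_+$. Your chamber-crossing argument (varying $\xi$ across the wall $\eta(y_0,p)^\perp$ to isolate a single edge) is correct and the existence of such $\xi_1,\xi_2$ is clear since the arrangement is finite, but it is considerably more work than the paper's one-line divisibility observation.
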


\begin{proof}
Since the fixed set is discrete and $\varphi$ is a perfect
Morse function, there is exactly one fixed point of index $2i$ for 
all $i \in \big\{0,\dots,\frac{1}{2} \dim(M) \big\}$.
Therefore, there  are exactly $\lambda(p)$ fixed points $y$ with $\lambda(y) < \lambda(p)$.
Moreover, by \cite[Lemma 2.7]{LT}, the fact that $[\omega]$ is integral
implies that
$[\omega + \psi - \psi(y)] \in H_T^2(M;\Z)$ for all $y \in M^T$.
Therefore, we may define a class
$$\beta  = \prod_{\lambda(y) < \lambda(p)} [\omega + \psi - \psi(y)] \ \in \  
H_T^{2 \lambda(p)} (M;\Z),$$
where the product is over all $y \in M^T$ such that $\lambda(y) < \lambda(p)$.

Since $H^{2i}\big(M;\R\big) = 
H^{2i}\big(\CP^{\frac{1}{2} \dim (M )};\R\big)$ for all $i$,
\cite[Proposition 3.4]{T} 
(and the fact that rational $\xi \in \ft$ are dense) 
implies that
\begin{equation}\labell{fake2}
 \varphi(y) < \varphi(p) \quad \mbox{exactly if} \quad \lambda(y) < \lambda(p) \qquad
\mbox{for all} \ y \in M^T.
\end{equation}
Since $\beta(y) = 0$ for all $y \in M^T$ such that
$\lambda(y) < \lambda(p)$, 
Lemma~\ref{kirwan},
Corollary~\ref{GT2}, and \eqref{fake2} together imply that we can write
\begin{equation*}
 \beta   = \sum_{\lambda(y) \geq \lambda(p)} x_y \gamma_y,
\end{equation*}
where the sum is over $y \in M^T$ such that $\lambda(y) \geq \lambda(p)$,
$\gamma_y \in H_T^{2 \lambda(y)}(M;\Z)$,
$x_y \in H^{2\lambda(p) - 2 \lambda(y)}(BT;\Z)$ for all $y \in M^T$,
and $\{ \gamma_y\}_{y \in M^T}$ is a basis for $H_T^*(M;\Z)$
as a $H^*(BT;\Z)$ module.
Since $p$ is the only fixed point  with index $2 \lambda(p)$, 
by degree considerations this implies that
\begin{equation}\labell{fake3}
\beta = x_p \gamma_p, \quad \mbox{where} \ x_p \in \Z. 
\end{equation}

Since  $[\omega]^{\lambda(p)}$ is the image of $\beta$ under the natural
restriction map from $H^*_T(M;\Z)$ to $H^*(M;\Z)$,
this implies that
$$
[\omega]^{\lambda(p)}=x_p\gt_p 
\quad\mbox{for all }p\in M^T, 
$$
where $\gt_p$, the restriction of $\gamma_p$, generates $H^{2 \lambda(p)}(M;\Z)$.
Moreover, since we have assumed that $[\omega]$ generates $H^2(M;\Z)$ and that
$H^*(M;A) \simeq H^*\big(\CP^{\frac{1}{2}\dim M};A\big)$ as rings,
$[\omega]^{\lambda(p)}$ generates $H^{2 \lambda(p)}(M;A)$.
Hence the equation above implies that $x_p$ must be invertible in $A$.
Therefore, evaluating both sides of \eqref{fake3} at $p$, 
\begin{equation}\labell{xp}
{\Lambda_p^-}
\prod_{\varphi(y) < \varphi(p)} 
 \frac{1}{  \psi(p) - \psi(y)  } 
= \frac{1}{x_p} \quad \in A.
\end{equation}

Now observe that 
$\psi(p)-\psi(y) $ is the product of a positive integer and a positive weight in $\ell^*$
for all $y\in M^T$ such that $\varphi(y)<\varphi(p)$. This proves the first claim.

If $M$ is a GKM space, then since
the GKM graph is $\frac{1}{2}\dim(M)$-valent and
has $\frac{1}{2}\dim(M)+1$ vertices, it is a complete graph. 
Therefore 
$\psi(p)-\psi(y)=m(y,p)\eta(y,p)$ for all  $y \in M^T$; moreover $m(y,p)\in \Z_+$ and $\eta(y,p)$ is a positive weight in $\Pi_p(M)$ for all $y\in M^T$
such that $\varphi(y) <  \varphi(p)$.
By \eqref{xp} we have that 
\begin{equation}\labell{xp2}
x_p=\displaystyle\prod_{\varphi(y) < \varphi(p)} m(y,p)
\end{equation}
is
a unit in $A_+$, which implies that $m(y,p)$ is a unit in $A_+$
for all $y,p\in M^T$ such that $\varphi(y)<\varphi(p)$.  
Finally observe that the weights in $\Pi_p(M)$ are all distinct since $M$
is a GKM space, and the second claim follows immediately.
(Notice that in the GKM case \eqref{fake2} directly
follows from the fact that the GKM graph is complete.)
\end{proof}

\begin{remark}\rm
Fix a GKM space
$(M,\omega,\psi)$ 
with GKM graph $(V,\gkme)$; 
assume 
that $H^*(M;\R)\simeq H^*\big(\CP^{\frac{1}{2}\dim M};\R\big)$
and 
that $[\omega]$ generates $H^2(M;\Z)$.
Then by \eqref{xp2},
 the magnitudes of the
edges of $(V,\gkme)$ determine the ring structure of $H^*(M;\Z)$.
\end{remark}

We 
also need a technical lemma.

\begin{lemma}\labell{fiberonto}
Let $(M,\omega,\psi)$ and $\big(\Mt,\ot,\pt\big)$ be Hamiltonian $T$-manifolds,
and 
let $\pi \colon M \to \Mt$ be a
strong symplectic fibration.
The natural restriction map
$ H_T^*(M;\Z) \to H^*\big(\Mh_p;\Z\big)$ is surjective
for all $p\in M$,
where $\Mh_p$ is the fiber $\pi^{-1}(\pi(p))$.
\end{lemma}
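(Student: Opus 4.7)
\medskip
\noindent\textbf{Proof proposal.}
The plan is to factor the restriction map as
\[ H_T^*(M;\Z) \xrightarrow{\,i^*\,} H_T^*(\Mh_p;\Z) \longrightarrow H^*(\Mh_p;\Z), \]
where the second arrow is the forgetful map from equivariant to ordinary cohomology, and then to exhibit classes in $H_T^*(M;\Z)$ whose images under this composition form a $\Z$-basis of $H^*(\Mh_p;\Z)$. Since $\pi$ is locally trivial, all fibers are diffeomorphic and the image of the restriction depends only on the diffeomorphism type of the fiber; hence I may assume $p\in M^T$, so that $\Mh_p$ is $T$-invariant and inherits a Hamiltonian $T$-structure with moment map $\psi|_{\Mh_p}$. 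Because every vertex of the moment polytope of $\Mt$ is of the form $\psi_{\Mt}(q)$ for some $q\in\Mt^T$, I may further arrange that $\pi(p)$ maps to such a vertex.

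Next, I would pick $\xi\in\ft$ that is generic for $M$ (so $\psi^\xi$ is Morse on $M$) and that simultaneously makes $\pi(p)$ the unique minimum of $\psi_{\Mt}^\xi$ on $\Mt$. This is possible because the weights of $T$ on $T_{\pi(p)}\Mt$ span a proper cone (as $\pi(p)$ sits at a vertex), so the set of $\xi$ for which $\pi(p)$ is the minimum is a nonempty open cone in $\ft$ and thus meets the generic locus. The strong symplectic fibration property then gives
\[ (T_sM,\omega) \simeq (T_s\Mh_p,\omega|_{\Mh_p}) \oplus (T_{\pi(p)}\Mt,\ot) \]
as symplectic $T$-representations for every $s\in\Mh_p^T$. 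Since every weight on $T_{\pi(p)}\Mt$ pairs negatively with $\xi$, the base factor contributes nothing to either the Morse index at $s$ or to $\Lambda_s^-$, so $\lambda(s)=\lh(s)$ and $\Lambda_s^- = \Lh_s^-$.

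Applying Lemma~\ref{kirwan} to $(M,\omega,\psi)$ with $\xi$, for each $s\in\Mh_p^T$ I obtain a class $\gamma_s\in H_T^{2\lh(s)}(M;\Z)$ with $\gamma_s(s)=\Lh_s^-$ and $\gamma_s(t)=0$ for every $t\in M^T\smallsetminus\{s\}$ satisfying $\psi^\xi(t)\leq\psi^\xi(s)$. Restricting to $\Mh_p$, the class $\gamma_s|_{\Mh_p}$ now fulfills the defining conditions of a Kirwan class at $s$ on $\Mh_p$ with respect to the generic moment-map component $\psi^\xi|_{\Mh_p}$. Invoking the basis statement of Lemma~\ref{kirwan} a second time, now applied to $\Mh_p$, the family $\{\gamma_s|_{\Mh_p}\}_{s\in\Mh_p^T}$ is an $H^*(BT;\Z)$-module basis of $H_T^*(\Mh_p;\Z)$; its image under the forgetful map is therefore a $\Z$-basis of $H^*(\Mh_p;\Z)$, yielding the claimed surjectivity.

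The main obstacle is ensuring that the ``base contribution'' $\Lambda_{\pi(p)}^-$ is trivial. Otherwise $\gamma_s|_{\Mh_p}(s)$ would acquire an extra factor in $H^{+}(BT;\Z)$ that is killed by the forgetful map, and one could no longer identify the restricted class with a Kirwan class on the fiber. The reduction to a vertex fiber, together with the corresponding choice of $\xi$ realizing $\pi(p)$ as the global minimum, is precisely what circumvents this obstruction.
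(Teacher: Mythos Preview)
Your argument is correct and follows essentially the same route as the paper: reduce to a fiber over the minimal fixed point of $\pt^\xi$ (so that $\Lt^-_{\pi(p)}=1$), restrict Kirwan classes from $M$, and invoke Lemma~\ref{kirwan} on the fiber to conclude. The paper phrases the reduction slightly differently---it uses connectedness of the base $\Mt\times_T ET$ of the Borel fibration rather than your ``fibers are diffeomorphic'' argument, and it separates the surjectivity of $H_T^*(\Mh_p;\Z)\to H^*(\Mh_p;\Z)$ as an explicit final step---but these are cosmetic variations on the same idea. One small wording issue: the image of the restriction does not depend ``only on the diffeomorphism type of the fiber'' in general, but rather on the homotopy class of the inclusion; what you need (and what the Borel-fibration formulation supplies cleanly) is that for a fiber bundle over a connected base all fiber inclusions induce the same map on cohomology.
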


\begin{proof}
Let $\varphi = \psi^\xi$ be a generic component of the moment map.
Since $\Mt \times_T ET$ is connected
and $\pi$ induces a fiber bundle $M \times_T ET \to \Mt \times_T ET$ with fiber
$\Mh_p$, 
we may assume that  $\pi(p) \in \Mt^T$ is the minimal fixed point. 
Hence, $\Lt^-_{\pi(p)} = 1$.

Consider any point $q \in \Mh_p^T$.
By Lemma~\ref{kirwan},
there exists a class 
$\gamma_q \in H_T^{2 \lambda(q)}(M;\Z)$
such that
$\gamma_q(q)  = \Lambda_q^-$
and $\gamma_q(r) = 0$ for  all $r \in M^T\smallsetminus \{q\}$ such that $\varphi(r) \leq
\varphi(q)$.
By definition of strong symplectic fibration and the
paragraph above, 
$\Lambda_q^- = \Lt_{\pi(q)}^- \Lh_q^- = \Lt_{\pi(p)} \Lh_q^- = \Lh_q^-$.
Therefore, if $\bh_q$ denotes the restriction of $\gamma_q$ to $\Mh_p$,
then
$\bh_q(q)  = \Lh_q^-$, and
$\bh_q(r) = 0$ for  all $r \in \Mh_q^T\smallsetminus \{q\}$ such that $\varphi(r) \leq
\varphi(q)$.
By Lemma~\ref{kirwan}, this implies that
 $\big\{\bh_q\big\}_{q \in \Mh_p^T}$  is
a basis for $H_T^*(\Mh_p;\Z)$ as a $H^*(BT;\Z)$ module.
Hence, the map  $H_T^*(M;\Z) \to H_T^*(\Mh_p;\Z)$ is surjective.
Finally,  since the fixed set is discrete,
$H^*(M^T;\Z)$ is torsion-free, and so 
 the natural restriction map
$H_T^*(\Mh_p;\Z) \to H^*(\Mh_p;\Z)$ is surjective;
see, for example,  \cite[\S2]{T}.
\end{proof}
\begin{proof}[Proof of Claim 2.\ of Theorem~\ref{tower symp}]
Let $q_j = \pi_j(q)$ for all $j$,
and let
$X_j = \rho_{j-1}^{-1}(q_{j-1}) \subset M_j$ be the fiber over $q_{j-1}$.
Note that the value of $\Xi(\gamma)$ doesn't change if
we multiply $\omega_j + \psi_j$ by a non-zero constant 
or add any constant to it.
Moreover, by Lemma~\ref{fiberonto} 
the restriction map from $H_T^2(M_j;\Z)$ to  $H^2(X_j;\Z)$ is surjective.
Therefore, since 
$H^2(X_j;\R) = \R$,
we may assume that $[\omega_j + \psi_j]$ 
lies in\footnote{
Since the fixed set is discrete $H_T^2(M_j;\Z)$ and $H^2(X_j;\Z)$ are torsion-free.
Therefore, we can identify these groups with their images in $H_T^2(M_j;\R)$ and
$H^2(X_j;\R)$, respectively.}
$H^2_T(M_j;\Z)$
and that $[\omega_j|_{X_j}]$ generates $H^2(X_j;\Z)$.

Let $\Lh^-_{q_j}$ denote  the equivariant Euler class of the negative
normal bundle of $\psi_j^\xi|_{X_j}$ at $q_j \in X_j$, and let
$\Lambda^-_{q_j}$ denote  the equivariant Euler class of the
negative normal bundle of $\psi_j^\xi$ at $q_j \in M_j$.
By the definition of  strong symplectic fibration, 
$\Lambda^-_{q_j} = \Lh^-_{q_j} \Lambda^-_{q_{j-1}}$ for all $ j.$
Since $M_0$ is a point, this implies by induction that
\begin{equation*}
\Lambda_q^- = \prod_{j=1}^k \Lh^-_{q_j}.
\end{equation*}

Therefore, to prove the claim it is  enough to prove that
given $h \in \{1,\dots,k\}$ such that the
fiber $X_h$ satisfies
$H^*(X_h;A) \simeq H^*\big(\CP^{\frac{1}{2} \dim{X_h}};A\big)$  as rings,
$r$ and $s$ in $M_k^T$ such
that $\pi_h(s) = \pi_h(q) = q_h$, and a path $\gamma$ from $r$ to $s$ such that
$h(\gamma_i,\gamma_{i+1}) = h$ for all $i \in \{1,\dots,|\gamma|\}$,
if we define
$$\Xi_h(\gamma)=\Lh_{q_h}^- 
\prod_{i=1}^{|\gamma|} \frac
{\pb_{h}(\gamma_{i+1}) - \pb_{h}(\gamma_i)}
{ \pb_{h}(q) - \pb_{h}(\gamma_i)}
\frac{\alpha_{\gamma_i} (\gamma_{i+1})}{\Lambda_{\gamma_{i+1}} ^-} 
$$
then
\begin{itemize}
\item[(a1)]
$\Xi_h(\gamma)$ can be written as the product of positive weights in
$\ell^*$ and a constant $C$ in $A$; moreover, $C > 0$ if 
$\alpha_r(r')$ is positive
 for all  $(r,r') \in E$.
 \item[(b1)]
 If $(M_k,\omega_k,\psi_k)$ is a GKM space, then
$\Xi_h(\gamma)$ can be written as the product of distinct positive weights in 
$\Pi_q(M)$ and a constant $C$ in $A$.
Finally, if $\Theta(r,r') > 0$ for all $(r,r') \in E$, then $C > 0$;
similarly, if $\Theta(r,r') \in A^\times$ for all $(r,r') \in E$,
then $C \in A^\times$.
\end{itemize}

To prove this, first note that since $h(\gamma_i,\gamma_{i+1}) =  h$ for all $i$
and $\pi_h(s) = \pi_h(q)$,
$\pi_h(\gamma_i) \in X_h$  and 
$\pi_h(\gamma_i) \neq \pi_h(\gamma_{i+1})$ for all $i$.
So by Lemma \ref{sympb} (or Lemmas \ref{GKMlambda} and \ref{GKMb'} 
if $M_k$ is GKM)
\begin{equation}\labell{3'}
\pb_{h}^\xi(\gamma_i) < \pb_{h}^\xi(\gamma_{i+1})
 \quad \mbox{for all} \ 1 \leq i \leq |\gamma|.
\end{equation}
Hence, $\pi_h(\gamma_i) \neq \pi_h(\gamma_j)$ for all $i \neq j$
and $\pb_h^\xi(\gamma_i)  < \pb_h^\xi(s)$ for all
$1 \leq i \leq |\gamma|$.
Therefore, since
$[\omega_j|_{X_j}]$ generates $H^2(X_j;\Z)$ and
$H^*(X_h;A) \simeq H^*\big(\CP^{\frac{1}{2} \dim{X_h}};A\big)$, 
Lemma~\ref{fake}  implies that
\begin{itemize}
\item[(a2)] $\Lh_{q_h}^- 
\prod_{i=1}^{|\gamma|} \frac
{1}
{ \pb_{h}(q) - \pb_{h}(\gamma_i)}$ can be written as the product of positive weights in $\ell^*$
and a constant in $A_+$.
\item[(b2)] If $(M_k,\omega_k,\psi_k)$ is a GKM space then  $\Lh_{q_h}^- 
\prod_{i=1}^{|\gamma|} \frac
{1}
{ \pb_{h}(q) - \pb_{h}(\gamma_i)}$ can be written as the product of distinct positive weights in
$\Pi_q(M_k)$ and a unit in $A_+$.
\end{itemize}
Here, in the case that $M_k$ is a GKM space, we
use the fact that by Remark \ref{discrete tower} $M_j$ is also a GKM space for all $j$;
moreover by Lemma \ref{GKMlambda} $\rho_j$ is a weight preserving map for all $j$, hence $\pi_h$ is weight preserving as well and
$\Pi_{q_h}(X_h) \subset \Pi_{q_h}(M_h)$
is a subset
of $\Pi_q(M_k)$. 

Since $[\omega_h + \psi_h]$ is an integral class,
Lemma~\ref{int} and \eqref{3'} together imply that  for all $1 \leq i \leq |\gamma|$,
\begin{equation}\labell{tower1}
 \left(\pb_h(\gamma_{i+1})  - \pb_h(\gamma_i) \right) 
\frac{\alpha_{\gamma_i}(\gamma_{i+1})}{\Lambda_{\gamma_{i+1}}^-} 
\in 
\begin{cases}
A, & \mbox{and}\\
A_+  &
\mbox{if }\;\alpha_r(r')\;\mbox{ is positive }
\ \forall  (r,r') \in E.
\end{cases}
\end{equation}

If $M_k$ is a GKM space
then 
by Theorem~\ref{existence canonical classes},
$\frac{\alpha_r(r')}{\Lambda_{r'}^-} = \frac{\Theta(r,r')}{\eta(r,r')}$ 
for all $(r,r') \in E$.
Moreover Lemma~\ref{GT1} implies that
$\psi_k^\xi(r) < \psi_k^\xi(r')$ and so 
$\eta(r,r')$ is positive 
because it is a positive multiple of $\psi_k(r') - \psi_k(r)$.
Therefore,
if $M_k$ is GKM, then
\begin{equation}\labell{tower5}
 \alpha_{r} (r')\;
\mbox{ is positive exactly if  }\;
\Theta(r,r') > 0
\;\; \mbox{for all }(r,r') \in E.
\end{equation}
Moreover, 
$\frac{\pb_h(\gamma_{i+1})-\pb_h(\gamma_i)}{\eta(\gamma_i,\gamma_{i+1})}=
m(\pi_h(\gamma_i),\pi_h(\gamma_{i+1}))$ 
because $\pi_h$ is a weight preserving map. 
Hence
by Lemma \ref{fake} we have that
\begin{equation}\labell{units}
 \left(\pb_h(\gamma_{i+1})  - \pb_h(\gamma_i) \right) 
\frac{\alpha_{\gamma_i}(\gamma_{i+1})}{\Lambda_{\gamma_{i+1}}^-} 
\in A_+^\times
\Leftrightarrow
\Theta(\gamma_i,\gamma_{i+1})
\in A_+^\times,
\end{equation}
where $A_+^\times$ denotes the set of positive units in $A$.
The claim now follows from (a2), (b2), \eqref{tower1}, \eqref{tower5} and \eqref{units}.
\end{proof}

\subsection*{Proof of Corollary \ref{corollary formula}}

The proof uses the following lemma.
\begin{lemma}\labell{cgfsymp}
Let $(M,\omega,\psi)$
and $\big(\Mt,\ot,\pt\big)$  be Hamiltonian $T$-manifolds with discrete fixed sets,
and let $\pi \colon M \to \Mt$ be a strong symplectic fibration.
Let $\varphi = \psi^\xi$ be a generic component of the moment map.
Given $q\in M^T$, consider the fiber $\Mh_q=\pi^{-1}(\pi(q))$.
If 
$\alpha_s \in H_T^{2\lambda(s)}(M;A)$
 is a canonical class at $s \in \Mh_q^T$, then
there exists $\ah_s \in H_T^*(\Mh_q;A)$ such that
$$\Lt_{\pi(q)}^- \ah_s = \alpha_s|_{\Mh_q}.$$
\end{lemma}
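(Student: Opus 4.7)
The strategy is to show that $\alpha_s|_{\Mh_q}$ is divisible by $\Lt_{\pi(q)}^-$ in $H_T^*(\Mh_q;A)$; once established, the desired class is $\ah_s := \alpha_s|_{\Mh_q}/\Lt_{\pi(q)}^-$. The key structural fact is that, since $\pi$ is a strong symplectic fibration, the equivariant splitting $T_r M \cong T_r \Mh_q \oplus T_{\pi(q)}\Mt$ holds for every $r \in \Mh_q^T$, yielding $\Lambda_r^- = \Lh_r^- \cdot \Lt_{\pi(q)}^-$ and $\lambda(r) = \lh(r) + \lt(\pi(q))$.

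First I would apply Lemma~\ref{kirwan} to $\Mh_q$ (with Morse function $\varphi|_{\Mh_q}$) to obtain a Kirwan basis $\{\bh_r\}_{r \in \Mh_q^T}$ of $H_T^*(\Mh_q;A)$ with $\bh_r(r) = \Lh_r^-$, and expand $\alpha_s|_{\Mh_q} = \sum_r y_r \bh_r$ with $y_r \in H^*(BT;A)$. Lemma~\ref{GT1} applied to $\alpha_s$ on $M$ combined with Corollary~\ref{GT2} on $\Mh_q$ forces $y_r = 0$ whenever $\varphi(r) < \varphi(s)$, while evaluating at $r=s$ gives $y_s \Lh_s^- = \Lambda_s^- = \Lt_{\pi(q)}^- \Lh_s^-$, so $y_s = \Lt_{\pi(q)}^-$. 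The remaining claim, that every $y_r$ lies in $\Lt_{\pi(q)}^- \cdot H^*(BT;A)$, is to be proved by induction on $\varphi(r)$.

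For the inductive step I would apply Lemma~\ref{kirwan} to $\Mt$ at the point $\pi(q)$ to produce a class $\gt_{\pi(q)} \in H_T^{2\lt(\pi(q))}(\Mt;\Z)$ whose pullback satisfies $\pi^*(\gt_{\pi(q)})|_{\Mh_q} = \Lt_{\pi(q)}^- \cdot 1$ (because $\pi|_{\Mh_q}$ is the constant map to $\pi(q)$) and vanishes at every $r'' \in M^T$ with $\pi(r'') \neq \pi(q)$ and $\pt^\xi(\pi(r'')) \leq \pt^\xi(\pi(q))$. Using surjectivity of $H_T^*(M;A) \to H_T^*(\Mh_q;A)$ (a variant of Lemma~\ref{fiberonto} obtained by choosing a generic $\xi'$ making $\pi(q)$ the minimum of $\pt^{\xi'}$), I would lift the partial sum $\sum_{\varphi(r')<\varphi(r)}(y_{r'}/\Lt_{\pi(q)}^-)\bh_{r'}$ to a class $A \in H_T^*(M;A)$ and form $\beta := \alpha_s - \pi^*(\gt_{\pi(q)}) \cdot A$ on $M$. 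Then $\beta$ vanishes at every fixed point strictly below $r$ in $\varphi$-value: the Kirwan properties of $\bh_{r'}$ dispose of $r'' \in \Mh_q^T$, while Lemma~\ref{sympb} together with the vanishing of $\pi^*(\gt_{\pi(q)})$ dispose of $r'' \notin \Mh_q$ with $\pt^\xi(\pi(r'')) \leq \pt^\xi(\pi(q))$. Corollary~\ref{GT2} applied to $\beta$ at $r$ then yields $\beta(r) = y_r \Lh_r^-$ divisible by $\Lambda_r^- = \Lh_r^- \Lt_{\pi(q)}^-$, forcing $y_r \in \Lt_{\pi(q)}^- \cdot H^*(BT;A)$.

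The hardest part will be controlling $\beta$ at fixed points $r'' \in M^T$ with $\pi(r'') \neq \pi(q)$, $\pt^\xi(\pi(r'')) > \pt^\xi(\pi(q))$, and $\varphi(r'') < \varphi(r)$, which fall outside the direct scope of Lemma~\ref{sympb}; the lift $A$ must be refined so that $\pi^*(\gt_{\pi(q)})(r'') \cdot A(r'')$ cancels $\alpha_s(r'')$ at each such $r''$, which can be arranged either by exploiting the compatibility of the moment maps under $\pi$ (as in Example~\ref{example essf}(ii)) or by an auxiliary induction on $\pt^\xi(\pi(r''))$. Once every $y_r$ is shown to lie in $\Lt_{\pi(q)}^- \cdot H^*(BT;A)$, setting $\ah_s := \sum_r (y_r/\Lt_{\pi(q)}^-)\bh_r$ produces the required class in $H_T^{2\lh(s)}(\Mh_q;A)$ satisfying $\Lt_{\pi(q)}^- \ah_s = \alpha_s|_{\Mh_q}$.
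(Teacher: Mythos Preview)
Your overall strategy---showing that $\alpha_s|_{\Mh_q}$ is divisible by $\Lt_{\pi(q)}^-$---is the right one, but your execution takes a substantially harder road than the paper, and the step you flag as ``hardest'' is a genuine gap that you do not close. You acknowledge that for fixed points $r'' \in M^T$ with $\pi(r'') \neq \pi(q)$, $\pt^\xi(\pi(r'')) > \pt^\xi(\pi(q))$, and $\varphi(r'') < \varphi(r)$, the class $\beta$ need not vanish, and you propose to ``refine'' the lift $A$ to force cancellation there. But you give no mechanism for doing this: $\pi^*(\gt_{\pi(q)})(r'')$ need not be a unit, so arranging $\pi^*(\gt_{\pi(q)})(r'')\cdot A(r'') = \alpha_s(r'')$ is a divisibility condition on $\alpha_s(r'')$ that you have not established. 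The vague appeals to Example~\ref{example essf}(ii) or an ``auxiliary induction on $\pt^\xi(\pi(r''))$'' do not supply the missing argument.

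The paper's proof avoids this difficulty entirely by working with the Morse--Bott function $\vb = \pi^*(\pt)^\xi$ rather than with $\varphi$. The fibers of $\pi$ over $\Mt^T$ are precisely the critical submanifolds of $\vb$, the negative normal bundle of $\vb$ at $\Mh_q$ has equivariant Euler class $\Lt_{\pi(q)}^-$, and Lemma~\ref{Propb} (together with Lemma~\ref{omegat}) shows directly that $\alpha_s$ vanishes on the entire sublevel set $\vb^{-1}(-\infty,\vb(q)-\delta)$, not merely at isolated fixed points. The conclusion then follows from the standard Morse--Bott/Thom--Gysin fact: a class vanishing on a sublevel set restricts to the next critical submanifold as a multiple of the Euler class of its negative normal bundle. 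This single geometric observation replaces your entire inductive apparatus and sidesteps the problematic fixed points altogether, because it never needs to compare $\varphi$-values across different fibers.
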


\begin{proof}
Define $\vb = \pi^*(\pt)^\xi  \colon M \to \R$. 
Since $\Mt$ has a discrete fixed set,
$\pt^\xi \colon \Mt \to \R$ is a Morse function 
with critical set $\Mt^T$.
Since $\pi$ is a fiber bundle, this implies that  $\vb$ 
is an invariant Morse-Bott function on $M$ and that
the critical component  of $\vb$
that contains $q$ is the fiber $\Mh_q$. 
Moreover, the index of $\vb$ at $\Mh_q$ is 
$2\lt(\pi(q))$,
and the equivariant Euler class
of the negative normal bundle  of $\vb$ at $\Mh_q$ is $\Lt^-_{\pi(q)}$.
By the definition of
strong symplectic fibration,
Lemma~\ref{omegat} and 
Lemma~\ref{Propb} 
imply that
for any $s \in \Mh_q^T$  the restriction
of $\alpha_s$ to $\vb^{-1}\big(-\infty, \vb(q) - \delta \big)$ vanishes 
for all $\delta > 0$.
Thus, by a standard Morse theory argument, there exists $\ah_s
\in H^{2 \lambda(s) - 2\lt(\pi(q))}_T(\Mh_q; A)$ such that
$
\Lt^-_{\pi(q)} \ah_s = \alpha_s|_{\Mh_q}.
$
\end{proof}

\begin{proof}[Proof of Corollary \ref{corollary formula}]

Since $\pi$ is a strong symplectic fibration,
$\Lambda^-_s = \Lt_{\pi(s)}^- \Lh_s^- = \Lt_{\pi(q)}^- \Lh_s^-$  and
$\lambda(s) = \lt(\pi(q)) + \lh(s)$ for all $s  \in  \Mh_q^T$.
Hence,
$\lh(r) \leq \lh(s)$ exactly if
$\lambda(r) \leq \lambda(s)$ for all $r$ and $s$ in $\Mh_q^T$.

By Lemma~\ref{cgfsymp}, for all $s \in \Mh_q^T$ there exists a class
$\ah_s \in H_T^{2 \lh(s)}(\Mh_q;A)$ such that
$\Lt_{\pi(q)}^- \ah_s = \alpha_s|_{\Mh_q}.$
Since $\alpha_s \in H_T^{2 \lambda(s)}(M; A)$ is a canonical 
class, 
the paragraph above implies that
$\ah_s$ is a canonical class at $s$ on $\Mh_q$ with respect to
the restriction $\varphi|_{\Mh_q}$.
This proves the first claim.
Moreover,
applying Theorem \ref{thm:main} (and Remark \ref{rmk:symplectic}) to  
$\Mh_q$, we
have 
\begin{equation}\labell{alphafiber}
\widehat{\alpha}_s(q)= 
\Lh_q^-
\sum_{\gamma\in \widehat{\Sigma}(s,q)}\prod_{i=1}^{|\gamma|}
\frac{\psi(\gamma_{i+1})-\psi(\gamma_i)}{\psi(q)-\psi(\gamma_i)}
\frac{\alpha_{\gamma_i}(\gamma_{i+1})}{\Lambda_{\gamma_{i+1}}^-}
\quad \mbox{for
all } s \in \Mh_q^T,
\end{equation} 
where $\widehat{\Sigma}(s,q)$ is the set of paths from $s$ to $q$ in 
the canonical graph associated to $\Mh_q$.

Now we can apply Theorem \ref{tower symp} to
$\pi \colon M \to \Mt$.
Observe that a path
$\gamma=(\gamma_1,\ldots,\gamma_{|\gamma|+1})$ from $p$ to
$q$  lies in $C(p,q)$ exactly if there exists
$j\in\{1,\ldots,|\gamma|+1\}$ such that 
$\pi(\gamma_i) \neq \pi(\gamma_{i+1})$ for all $i < j$
and $\pi(\gamma_i) = \pi(\gamma_{i+1})$ for all $i \geq j$, 
that is, so that
$(\gamma_1,\ldots,\gamma_j)$ belongs to $\overline{\Sigma}(p,\gamma_j)$,
and  $(\gamma_j,\ldots,\gamma_{|\gamma|+1})$ belongs 
to $
\widehat{\Sigma}(\gamma_j,q)$.
Hence, since $\Lambda_q^- = \Lh_q^- \Lt_{\pi(q)}^-$,
the second claim follows immediately from \eqref{alphafiber}
and Theorem \ref{tower symp}.

Finally, the third claim follows from (a1) and (b1).

\end{proof}

\begin{remark}\rm\labell{fibers are GKM}
Claim 1.\ of Corollary~\ref{corollary formula} is much easier to prove
for GKM spaces.  To see this,
let $(M,\omega,\psi)$ and $\big(\Mt,\ot,\pt\big)$ be  GKM spaces, and
let $\pi \colon M \to \Mt$ be a
strong symplectic fibration.
Let $\varphi = \psi^\xi$ be a generic component of the moment map.
Assume that 
canonical classes $\alpha_p \in H_T^{2 \lambda(p)}(M;A)$ exist for
all $p \in M^T$.
By Remark 4.3 in \cite{GT}, this implies that $\varphi$ is index increasing on $M$. 
Since $\pi$ is a strong symplectic fibration, 
$(\Mh_q,\omega|_{\Mh_q},\psi|_{\Mh_q})$ is a GKM space for all $q\in M^T$, and its GKM graph is just the restriction of the GKM graph of $M$
to $\Mh_q^T$. 
Moreover, $\lambda(s)-\lambda(r)=\widehat{\lambda}(s)-\widehat{\lambda}(r)$ 
for all $r,s\in \Mh_q^T$; so $\varphi|_{\Mh_q}$ is also index increasing. Therefore the claim follows
from Theorem \ref{existence canonical classes}. 
\end{remark}

In our final lemma,
we show how to express the polynomials $P(\gamma)$ appearing 
in Corollary \ref{corollary formula} in terms of the magnitudes
of the edges of the GKM graph associated to the 
base;  see Definition \ref{magnitude}.

\begin{definition}\labell{def:sv}
Fix a GKM space
$(\Mt,\ot,\pt)$ 
with GKM graph $(\Vt,\gkmet)$,
and let  
$\pt^\xi$ be a generic component of the moment map.
Given
an ascending path 
$\gt=(\gt_1,\ldots,\gt_{|\gt|+1})$, 
the set of {\bf skipped vertices} of $\gt$ is defined to be
 $$SV(\gt)=\left\{r\in \widetilde{V} 
\; \left| \; \pt^{\xi}(r)<\pt^{\xi}\big(\gt_{|\gt|+1}\big)  \right. \right\} \smallsetminus \big\{ \gt_1,\dots,\gt_{|\gt| + 1} \big\}.$$ 
\end{definition}

\begin{lemma}\labell{explicit P}
Let $(M,\omega,\psi)$ and $\big(\Mt,\ot,\pt\big)$ be GKM spaces with GKM graphs $(V,\gkme)$ and $(\Vt,\gkmet)$ 
and let $\pi\colon M \r \Mt$ be a 
strong symplectic fibration.
Let $\varphi=\psi^{\xi}$ be a generic component of the moment map. 
Assume that $\varphi$ is index increasing. 
Also assume that $H^*(\Mt;A)\simeq H^*\big(\CP^{\frac{1}{2}\dim (\Mt)};A\big)$. 

Given $p$ and $s \in M^T$ and a 
horizontal path
$\gamma=(\gamma_1,\ldots,\gamma_{|\gamma|+1})$ from $p$ to $s$ in the
canonical graph $(V,E)$, define 
$$
P(\gamma)=
\Lt^-_{\pi(s)} \prod_{i=1}^{|\gamma|}
\frac
 {\pt(\pi(\gamma_{i+1}))-\pt(\pi(\gamma_i))}
 {\pt(\pi(s))-\pt(\pi(\gamma_i))}
 \frac{\alpha_{\gamma_i} (\gamma_{i+1})}{\Lambda_{\gamma_{i+1}} ^-}.
$$
Then
\begin{gather*}
P(\gamma)=
 \prod_{i=1}^{|\gamma|}\frac{m(\pi(\gamma_i),\pi(\gamma_{i+1}))\Theta(\gamma_i,\gamma_{i+1})}
 {m(\pi(\gamma_i),\pi(s))}\prod_{r\in SV(\pi(\gamma))}\eta(r,\pi(s)). 
\end{gather*}
\end{lemma}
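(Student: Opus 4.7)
The plan is to rewrite each factor appearing in the definition of $P(\gamma)$ in terms of magnitudes, weights, and $\Theta$, and then recognize the resulting expression as the claimed product.

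First, I would unpack the three ingredients in the $i$-th factor of $P(\gamma)$. Since $\pi$ is a strong symplectic fibration between GKM spaces, Lemma~\ref{GKMlambda} says that $\pi$ is weight preserving; applied to the horizontal edge $(\gamma_i,\gamma_{i+1})$, this gives $\eta(\pi(\gamma_i),\pi(\gamma_{i+1}))=\eta(\gamma_i,\gamma_{i+1})$. Combined with Definition~\ref{magnitude}, this yields
$$\pt(\pi(\gamma_{i+1}))-\pt(\pi(\gamma_i))=m(\pi(\gamma_i),\pi(\gamma_{i+1}))\,\eta(\gamma_i,\gamma_{i+1}).$$
Since $\varphi$ is index increasing, Theorem~\ref{existence canonical classes} gives
$\frac{\alpha_{\gamma_i}(\gamma_{i+1})}{\Lambda_{\gamma_{i+1}}^-}=\frac{\Theta(\gamma_i,\gamma_{i+1})}{\eta(\gamma_i,\gamma_{i+1})}$, and the two $\eta(\gamma_i,\gamma_{i+1})$ factors cancel, leaving
$$\frac{\pt(\pi(\gamma_{i+1}))-\pt(\pi(\gamma_i))}{\pt(\pi(s))-\pt(\pi(\gamma_i))}\frac{\alpha_{\gamma_i}(\gamma_{i+1})}{\Lambda_{\gamma_{i+1}}^-}=\frac{m(\pi(\gamma_i),\pi(\gamma_{i+1}))\,\Theta(\gamma_i,\gamma_{i+1})}{\pt(\pi(s))-\pt(\pi(\gamma_i))}.$$

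Second, I would treat the denominator $\pt(\pi(s))-\pt(\pi(\gamma_i))$. By the proof of Lemma~\ref{fake}, the hypothesis $H^*(\Mt;A)\simeq H^*\big(\CP^{\frac{1}{2}\dim(\Mt)};A\big)$ together with the GKM assumption forces the GKM graph $(\Vt,\gkmet)$ of $\Mt$ to be complete. By Lemma~\ref{increasing path} the horizontal subpath $\pi(\gamma)$ is ascending, so in particular $\pt^\xi(\pi(\gamma_i))<\pt^\xi(\pi(s))$, which ensures $\pi(\gamma_i)\neq\pi(s)$ and that $(\pi(\gamma_i),\pi(s))\in\gkmet$. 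Thus
$$\pt(\pi(s))-\pt(\pi(\gamma_i))=m(\pi(\gamma_i),\pi(s))\,\eta(\pi(\gamma_i),\pi(s)).$$

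Third, I would analyze the prefactor $\Lt^-_{\pi(s)}$. Because $\Mt$ is GKM and its graph is complete, $\Lt^-_{\pi(s)}$ is the product of $\eta(r,\pi(s))$ over all $r\in\Vt\smallsetminus\{\pi(s)\}$ with $\pt^\xi(r)<\pt^\xi(\pi(s))$. The ascending property of $\pi(\gamma)$ makes the vertices $\pi(\gamma_1),\dots,\pi(\gamma_{|\gamma|+1})$ pairwise distinct and all but $\pi(s)$ satisfy $\pt^\xi(\pi(\gamma_i))<\pt^\xi(\pi(s))$. Hence this indexing set splits as $\{\pi(\gamma_1),\dots,\pi(\gamma_{|\gamma|})\}\sqcup SV(\pi(\gamma))$, giving
$$\Lt^-_{\pi(s)}=\Bigl(\prod_{i=1}^{|\gamma|}\eta(\pi(\gamma_i),\pi(s))\Bigr)\Bigl(\prod_{r\in SV(\pi(\gamma))}\eta(r,\pi(s))\Bigr).$$

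Substituting the three computations into the definition of $P(\gamma)$, the $|\gamma|$ factors $\eta(\pi(\gamma_i),\pi(s))$ from $\Lt^-_{\pi(s)}$ cancel the corresponding weights from the denominators, producing exactly the claimed expression. There is no substantive obstacle; the main thing to check carefully is the bookkeeping of the vertex set underlying $\Lt^-_{\pi(s)}$, in particular verifying via Lemma~\ref{increasing path} that the vertices $\pi(\gamma_i)$ are distinct so that the splitting into path vertices and skipped vertices is a disjoint union and no weight is double-counted.
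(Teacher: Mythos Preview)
Your proof is correct and follows essentially the same route as the paper's: you invoke Lemma~\ref{GKMlambda} for weight preservation, Theorem~\ref{existence canonical classes} for the $\Theta/\eta$ identity, the completeness of $(\Vt,\gkmet)$ from Lemma~\ref{fake}, and Lemma~\ref{increasing path} to make $\pi(\gamma)$ ascending, then split $\Lt^-_{\pi(s)}$ over path vertices and skipped vertices exactly as the paper does. The only omission is that you invoke Lemma~\ref{increasing path} without first noting (via Lemma~\ref{GT1}) that $\gamma$ itself is ascending, but this is immediate since $\gamma$ lies in the canonical graph.
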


\begin{proof}
Observe that by Theorem \ref{existence canonical classes}, canonical classes
$\alpha_p$ exist for all $p\in M^T$ and $(V,E)\subset (V,\gkme)$.
By Lemma~\ref{GKMlambda}, $\pi$ is  weight preserving; hence
$\eta(\gamma_i,\gamma_{i+1})=\eta(\pi(\gamma_i),\pi(\gamma_{i+1}))$ for all
$i$, and
by the definition of magnitude 
$\pt(\pi(\gamma_{i+1}))-\pt(\pi(\gamma_i)) = 
\eta(\gamma_i,\gamma_{i+1}) m(\pi(\gamma_i),\pi(\gamma_{i+1}))$.
Since $H^*(\Mt;A)\simeq H^*\big(\CP^{\frac{1}{2}\dim (\Mt)};A\big)$,
$(\Vt,\gkmet)$
is a complete graph (see the proof of Lemma \ref{fake}),
and  so
$\pt(\pi(s))-\pt(\pi(\gamma_i))
= \eta(\pi(\gamma_{i}),\pi(s))m(\pi(\gamma_{i}),\pi(s))$ for all $i\leq |\gamma|$.
Moreover, by Lemma \ref{GT1}, $\gamma$ is an ascending path; hence
Lemma~\ref{increasing path} implies that $\pi(\gamma)$ is ascending as well
(with respect to $\pt^\xi$), and so
$\displaystyle\prod_{i=1}^{|\gamma|}\frac{\widetilde{\Lambda}_{\pi(s)}^-}{\eta(\pi(\gamma_i),\pi(s))}=\prod _{r\in SV(\pi(\gamma))}\eta(r,\pi(s))$.
Finally observe that by
Theorem \ref{existence canonical classes} $\displaystyle\frac{\alpha_{\gamma_i}(\gamma_{i+1})}{\Lambda_{\gamma_{i+1}}^-}=
\frac{\Theta(\gamma_i,\gamma_{i+1})}{\eta(\gamma_i,\gamma_{i+1})}$ for all $i$.
\end{proof}

\section{Positive integral formulas for Schubert classes}

\labell{sec:co}

We are now ready to apply our results
to the important special case of coadjoint orbits.
Our main goal is to get positive integral formulas for 
equivariant
Schubert classes on generic 
coadjoint orbits of type $A_n$, $B_n$, $C_n$, and $D_n$.

Let $G$ be a compact simple Lie group with Lie algebra $\mathfrak{g}$, and 
let $( \cdot, \cdot)$ denote the natural pairing between $\g^*$ and $\g$.
Let $T\subset G$ 
be a maximal torus with Lie algebra $\ft$,  $R \subset \td$ denote
the set of roots, and  $W$  the Weyl group of $G$.
Let $\langle \cdot, \cdot \rangle$ be a positive definite symmetric bilinear form on $\g$ which
is $G$-invariant; we use it to
embed $\td$ in $\g^*$.

Given a  point $p_0 \in \ft^*$, consider the coadjoint orbit
$\Oo_{p_0} = G \cdot p_0.$  Let $P_{p_0} \subset G$ be the stabilizer of $p_0$;
the map which takes $g \in G$ to $g \cdot p_0 \in \Oo_{p_0}$
induces an identification $\Oo_{p_0} = G/P_{p_0}$.
There
is natural 
$G$-invariant complex structure  $J$ and
a compatible symplectic form $\omega$ (the Kostant-Kirillov form) on  
$\Oo_{p_0}$;
the moment map is the inclusion map $\Oo_{p_0} \hookrightarrow \g^*$.
Hence, the moment map $\psi \colon \Oo_{p_0} \to \ft^*$ for the $T$
action is the composition of this inclusion with the natural projection
from $\g^*$ to $\ft^*$.
Moreover,
$(\mathcal{O}_{p_0},\omega,\psi)$ is a GKM space. (See \cite{GHZ}.)
Finally, 
we will say that $\Oo_{p_0}$ is {\bf generic} if $p_0 \in \ft^*$
lies in the interior of a Weyl chamber.

\subsection{The canonical graph of a generic coadjoint orbit}
\labell{cangraph}

Fix a generic coadjoint orbit $\Oo_{p_0}$.
As we will see, in this special case, the canonical classes exist
and are exactly the equivariant Schubert classes.
The main goal of this subsection is to give 
an explicit description of the associated canonical graph,
including all labels.

\begin{proposition}\labell{cocan}
Let the maximal torus $T$ of a compact simple Lie group $G$ 
act on a generic coadjoint orbit $\Oo_{p_0} \subset \g^*$ with moment map 
$\psi \colon \Oo_{p_0}  \to \ft^*$.
Let $\varphi = \psi^\xi$ 
be a generic component of 
the moment map that achieves its minimal value at $p_0 \in \ft^*$, and
let 
$R^+=\{\alpha\in R\mid (\alpha,\xi)>0\}$.
There exist canonical classes 
$\alpha_p\in H_T^{2\lambda(p)}(\Oo_{p_0} ;\Z)$ for all
$p\in \Oo_{p_0}^T$. 
Under the identification of the Weyl group $W$ with $\Oo_{p_0}^T$ given by
$w \mapsto w(p_0)$,
the canonical graph  is $(W,E)$, where 
\begin{gather*}
\psi(w) = w(p_0) \quad \mbox{for all } w \in W, \\
E = \big\{ (w, w s_\beta) \in  W \times W \; \big| \;
l(w s_\beta) = l(w) +1 \mbox{ and }  \beta \in R \big\}, 
\quad \mbox{and} \\
\frac{\alpha_{w(p_0)}\big(w'(p_0)\big)}{\Lambda_{w'(p_0)}^{-}}
                    = 
\frac{1}{w(\beta)} 
\quad
 \mbox{for all } (w, w') \in E,
\mbox{where } w' = w s_\beta \mbox{ and } \beta \in R^+. 
\end{gather*}
\end{proposition}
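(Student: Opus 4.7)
The strategy is to verify that $\mathcal{O}_{p_0}$ satisfies the hypotheses of Theorem~\ref{existence canonical classes}, and then to compute the quotient $\Theta$ appearing there and show that it equals $1$ for every Bruhat cover.

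Since $p_0$ is generic, its $G$-stabilizer equals $T$, so $\mathcal{O}_{p_0}\simeq G/T$ and $\sigma\mapsto\sigma(p_0)$ identifies $W$ with $\mathcal{O}_{p_0}^T$. That $(\mathcal{O}_{p_0},\omega,\psi)$ is GKM, with an edge $(\sigma(p_0),\sigma s_\beta(p_0))$ for every root $\beta$ and edge weight $\eta(\sigma(p_0),\sigma s_\beta(p_0))=\sigma(\beta)$, is classical. After normalizing so that $p_0$ and $\xi$ lie in a common Weyl chamber, the set $R^+$ of the proposition agrees with the positive roots determined by this chamber, and the isotropy weights at $\sigma(p_0)$ become $\{-\sigma(\alpha):\alpha\in R^+\}$. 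Hence $\lambda(\sigma(p_0))=\bigl|\{\alpha\in R^+:\sigma(\alpha)\in -R^+\}\bigr|=l(\sigma)$. The standard equivalence $l(\sigma s_\beta)>l(\sigma)\iff \sigma(\beta)\in R^+$ then shows that $\varphi$ is index-increasing, and moreover the condition $\lambda(\sigma s_\beta(p_0))-\lambda(\sigma(p_0))=1$ appearing in Theorem~\ref{existence canonical classes} is equivalent to $(\sigma,\sigma s_\beta)$ being a Bruhat cover.

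Theorem~\ref{existence canonical classes} then produces integral canonical classes, identifies the edge set of the canonical graph as the asserted $E$, and gives $\alpha_{\sigma(p_0)}(\sigma s_\beta(p_0))=\Lambda^{-}_{\sigma s_\beta(p_0)}\,\Theta(\sigma,\sigma s_\beta)/\sigma(\beta)$. It suffices, therefore, to prove $\Theta(\sigma,\sigma s_\beta)=1$ for every Bruhat cover. Set $\eta=\sigma(\beta)$ and note that $s_\eta=\sigma s_\beta\sigma^{-1}$, so the multiset of isotropy weights at $\sigma s_\beta(p_0)$ is the $s_\eta$-image of the multiset at $\sigma(p_0)$. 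Because $s_\eta(x)\equiv x\pmod{\eta}$ for every $x\in\td$, and because $\rho_\eta$ is precisely reduction modulo the ideal $(\eta)\subset\Sym(\td)$, one has $\rho_\eta\circ s_\eta=\rho_\eta$ on $\Sym(\td)$. Thus $\Theta=1$ will follow as soon as one verifies the set-theoretic identity $s_\eta(P_\sigma)=P_{\sigma s_\beta}\setminus\{\eta\}$, where $P_\tau$ denotes the set of positive weights at $\tau(p_0)$; equivalently, $R(\sigma s_\beta)=R(\sigma)\sqcup\{\beta\}$ as subsets of $R^+$, where $R(\tau):=\{\alpha\in R^+:\tau(\alpha)\in -R^+\}$.

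The principal obstacle is this last identity. Since $\beta\in R(\sigma s_\beta)\setminus R(\sigma)$ is immediate and $|R(\sigma s_\beta)|=l(\sigma)+1=|R(\sigma)|+1$, matters reduce to the inclusion $R(\sigma)\subset R(\sigma s_\beta)$. This is the only place where the Bruhat-cover hypothesis is used in an essential way: naive sign arguments applied to individual roots already fail for non-simple $\beta$, but the equality of inversion sets up to the single root $\beta$ is a standard consequence of the strong exchange condition for a Coxeter group.
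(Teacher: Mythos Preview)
Your overall strategy matches the paper's: verify that $\varphi$ is index increasing, invoke Theorem~\ref{existence canonical classes}, and then show $\Theta(\sigma,\sigma s_\beta)=1$ for every Bruhat cover. The gap is in the last step. The identity you aim for,
\[
R(\sigma s_\beta)=R(\sigma)\sqcup\{\beta\}
\quad\text{(equivalently, }s_\eta(P_\sigma)=P_{\sigma s_\beta}\smallsetminus\{\eta\}\text{)},
\]
is false whenever $\beta$ is not simple. For instance, in type $A_2$ with $\sigma=s_1$ and $\beta=\alpha_2$, one has $R(s_1)=\{\alpha_1\}$ while $R(s_1s_2)=\{\alpha_2,\alpha_1+\alpha_2\}$; the inversion sets do not differ by the single root $\beta$. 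Correspondingly $s_\eta(P_\sigma)=\{-\alpha_2\}$ whereas $P_{\sigma s_\beta}\smallsetminus\{\eta\}=\{\alpha_1\}$. The strong exchange condition does \emph{not} assert that inversion sets of a Bruhat cover differ by one element; it asserts that a reduced expression for $\sigma s_\beta$ can be obtained from one for $\sigma$ by deleting a single simple reflection.

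What is actually needed, and what the paper proves in Proposition~\ref{theta one1}, is only that there exists a bijection $f\colon P_\sigma\to P_{\sigma s_\beta}\smallsetminus\{\eta\}$ with $f(\gamma)\equiv\gamma\pmod{\eta}$; this is strictly weaker than your set equality and is exactly what is required to conclude $\rho_\eta(\Lambda_p^-)=\rho_\eta(\Lambda_q^-/\eta)$. The paper constructs this bijection by fixing a reduced expression $\sigma s_\beta=s_1\cdots s_l$, using the Strong Exchange Condition to write $\sigma=s_1\cdots\widehat{s_j}\cdots s_l$, and then invoking the description \eqref{positive2} of $P_\tau$ in terms of a reduced word: the two lists of weights agree entry by entry for $i<j$, and for $i>j$ they differ by the reflection $s_{\widetilde w(\alpha_j)}=s_\eta$, hence by a multiple of $\eta$. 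Your observation that $\rho_\eta\circ s_\eta=\rho_\eta$ is correct and is implicitly what makes this bijection do the job, but the map $s_\eta$ itself does not send $P_\sigma$ into $P_{\sigma s_\beta}$.
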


We begin by describing the GKM graph.

\begin{lemma}\labell{coGKM}
Let the maximal torus $T$ of a compact simple Lie group $G$ 
act on a generic coadjoint orbit $\Oo_{p_0} \subset \g^*$ with moment map 
$\psi \colon \Oo_{p_0}  \to \ft^*$.
Let $\varphi = \psi^\xi$ 
be a generic component of 
the moment map that achieves its minimal value at $p_0 \in \ft^*$, and
let 
$R^+=\{\alpha\in R\mid (\alpha,\xi)>0\}$.
Under the identification of the Weyl group $W$ with $\Oo_{p_0}^T$ given by
$w \mapsto w(p_0)$,
the  GKM graph is $(W,\gkme)$, where
\begin{gather*}
\psi(w) = w(p_0) \quad \mbox{for all } w \in W, \\
\gkme = \big\{ (w, w s_\beta) \in  W \times W \; \big| \;
\beta \in R \big\}, \mbox{and} \\
\eta(w, w') = w(\beta) = - w'(\beta)
\ 
 \mbox{for all } (w, w') \in E,
\mbox{where } w' = w s_\beta \mbox{ and } \beta \in R^+. 
\end{gather*}
\end{lemma}

\begin{proof}
As proved in \cite{GHZ}, 
the  GKM graph $(V,\gkme)$ of the coadjoint orbit $\Oo_{p_0}$
can be described as follows:
\begin{itemize}
\item The map from the Weyl group $W$ to $\ft^*$ which takes $w$ to $w(p_0)$ induces a
bijection between the elements of the Weyl group and the vertices 
$V=\Oo_{p_0}^T \subset \td \subset \g^* $.
The restriction of the moment map $\psi$ to $V$ is the inclusion map, that is, $\psi(p) = p$ for all $p \in V$.
\item There exists an edge $e \in \gkme$ between two vertices $p_1=w_1(p_0)$ and $p_2=w_2(p_0)$  
if and only if $w_2=s_{\alpha}w_1$, where $s_{\alpha}$ is the reflection associated to
 some $\alpha \in R$. 
In this case, the weight $\eta(p_1, p_2)$ is the  unique $\alpha \in R$ such that
$w_2 = s_{\alpha} w_1$  and $\langle  p_2, \alpha \rangle > 0$.
\end{itemize}
In particular, the set of weights of the isotropy representation
on 
$(T_p \Oo_{p_0},\omega)$
is
\begin{equation}\labell{all}
\Pi_{p}(\Oo_{p_0})=  
 \{ \alpha \in R \mid \langle p, \alpha \rangle > 0 \}
\quad \mbox{for all } p \in V.
\end{equation}
Since $p_0$ is the minimum, $ (\alpha,\xi) < 0$ 
for every weight $\alpha \in \Pi_{p_0}(\Oo_{p_0})$.
By \eqref{all}, this implies that 
\begin{equation}\labell{minimum}
\langle p_0,\alpha \rangle < 0 \quad \mbox{for all } \alpha \in R^+.
\end{equation}
Moreover, it is easy to check that   
\begin{equation}\labell{switch}
s_{w(\beta)}w=ws_{\beta}
\quad \mbox{for all }w \in W \mbox{ and }\beta \in R.
\end{equation}
Since the Weyl group takes $R$ to itself, this implies that
there exists an edge $e \in \gkme$ between two vertices 
$p_1=w_1(p_0)$ and $p_2=w_2(p_0)$  
if and only if $w_2=w_1 s_\beta$ 
for some $\beta \in R^+$. 
In this case,  
since $\langle \cdot , \cdot  \rangle$ is $G$-invariant, 
\eqref{minimum} implies that 
$\langle p_2, w_2 (\beta) \rangle  = \langle w_2(p_0),  w_2 (\beta) \rangle =
\langle p_0, \beta \rangle < 0.$
Therefore,  $\eta(p_1, p_2) =  - w_2(\beta) = 
- w_1 s_\beta (\beta) =  w_1(\beta)$.
\end{proof}

In Figure \ref{GKM graphB2} we draw the GKM graph of a generic coadjoint orbit
of $SO(5)$ through the point $p_0=-2x_1-x_2$ 
(here every pair of edges $(p,q)$ and $(q,p)$ is represented by one single edge),
together with the canonical graph associated to a component of the moment
map which achieves its minimum at $p_0$.


\begin{figure}[h!]
\begin{center}
\epsfxsize=\textwidth
\leavevmode
\psfrag{m}{$p_0$}
\includegraphics[height=2in]{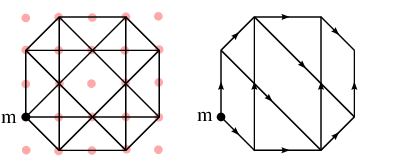}

\end{center}
\caption{The GKM graph and the canonical graph of a generic coadjoint orbit of $SO(5)$}
\label{GKM graphB2}
\end{figure}

We will need the following standard facts about root systems \cite{Hum}.
Given a set of positive roots $R^+$, let $R_0 \subset R^+$ be the associated simple roots.
Every  element $w$ of the Weyl group $W$ can be written as a product of simple 
reflections, i.e. $w=s_1\cdots s_r$, where $s_i=s_{\alpha_i}$ and $\alpha_i\in R_0$ 
for all $i=1,\ldots,r$ 
\cite[\S 1.5]{Hum}.
The {\bf length} of $w$, denoted $l(w)$, is the smallest $r$ for which such an expression 
exists. 
Any such  expression with $r = l(w)$ is a {\bf reduced expression}
for $w$.
\begin{enumerate}
\item 
[1.]
Given $w \in W$ and $\beta \in R^+$,
$l(w s_{\beta}  ) > l(w)$ exactly if $w(\beta) \in R^+$ 
 \cite[\S 5.7]{Hum}.
\item 
[2.]
If  $w=s_1\cdots s_r$ is a reduced expression for $w \in W$, 
where $s_i=s_{\alpha_i}$ for some $\alpha_i\in R_0$ for all $i$, 
then (see \cite[page 14] {Hum})
$$
 R^+ \cap w^{-1}(-R^+) =
\{\beta_1,\ldots,\beta_r\} \quad \mbox{ where }
\beta_i=s_r\cdots s_{i+1}(\alpha_i).
$$
Moreover, the $\beta_i$ are distinct.
\end{enumerate}

\begin{lemma}\labell{llambda}
Let the maximal torus $T$ of a compact simple Lie group $G$ 
act on a generic coadjoint orbit $\Oo_{p_0} \subset \g^*$ with moment map 
$\psi \colon \Oo_{p_0}  \to \ft^*$.
Let $\varphi = \psi^\xi$ 
be a generic component of 
the moment map that achieves its minimal value at $p_0 \in \ft^*$, and
let 
$R^+=\{\alpha\in R\mid (\alpha,\xi)>0\}$.
Then for any $w \in W$ with reduced expression  $w = s_1 \dots s_r$, 
$$
\Pi_{w(p_0)}^- (\Oo_{p_0}) 
=
\{\eta_1,\ldots,\eta_r\} \quad \mbox{ where }
\eta_i=s_1\cdots s_{i-1}(\alpha_i);  $$
moreover, the $\eta_i$ are distinct.  Therefore,
$$
\lambda(p) = l(w)\quad \mbox{for all } p=w(p_0)\in V.
$$
\end{lemma}

\begin{proof}
By Lemma~\ref{coGKM},
$
\Pi_{w(p_0)} (\Oo_{p_0})
= w(-R^+)
$.
Thus, since the set of weights $\Pi_p^-(\Oo_{p_0})$
in the negative normal bundle at $p$ is the set of positive weights in
the representation $(T_p\Oo_{p_0},\omega)$,
\begin{equation}\labell{positive}
\Pi_p^- (\Oo_{p_0})= 
 R^+ \cap w(-R^+) = - w \left(R^+ \cap  w^{-1}(-R^+)\right)\quad\mbox{for all }p=w(p_0)\in V.
\end{equation}
Hence, the first claim follows directly from Fact 2.\ above.
Since $\lambda(p)=|\Pi_p^-(\Oo_{p_0})|$,
the next claim is immediate.
\end{proof}

By Theorem~\ref{existence canonical classes}, the next lemma demonstrates
that canonical classes exist on $\Oo_{p_0}$, thus proving the
first claim of Proposition~\ref{cocan}.
Recall that $\varphi$ is {\em index increasing} exactly if
 $\lambda(p)<\lambda(q)$ for every $(p,q)\in \gkme$ such that $\varphi(p)<\varphi(q)$.

\begin{lemma}\labell{flag index increasing}
Let the maximal torus $T$ of a compact simple Lie group $G$ 
act on a generic coadjoint orbit $\Oo_{p_0} \subset \g^*$ with moment map $\psi \colon \Oo_{p_0}  \to \ft^*$.
Then each  generic component of the moment map,
$\varphi = \psi^\xi $, 
 is index increasing.
\end{lemma}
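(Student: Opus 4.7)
The plan is to translate the statement that $\varphi$ is index increasing directly into a known fact about lengths in the Weyl group, using the explicit descriptions of the GKM graph, the weights, and the indices that have already been assembled.

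First I would take an arbitrary edge $(p_1,p_2) \in \gkme$ with $\varphi(p_1) < \varphi(p_2)$ and write $p_i = w_i(p_0)$. By Remark~\ref{rswitch}, there exists a (necessarily unique) $\beta \in R$ with $w_2 = w_1 s_\beta$, and we may choose $\beta \in R^+$ because $s_\beta = s_{-\beta}$. Then again by Remark~\ref{rswitch} the edge weight is $\eta(p_1,p_2) = w_1(\beta)$. Since $\psi(p_2) - \psi(p_1)$ is a positive multiple of $\eta(p_1,p_2)$, pairing against $\xi$ gives
\[
0 \;<\; \varphi(p_2) - \varphi(p_1) \;=\; (\text{positive constant}) \cdot \langle w_1(\beta),\xi\rangle,
\]
so $w_1(\beta) \in R^+$ by the definition of $R^+$.

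Next I would invoke fact~1 from the excerpt: for $w \in W$ and $\beta \in R^+$, $l(w s_\beta) > l(w)$ if and only if $w(\beta) \in R^+$. Applied with $w = w_1$, this gives $l(w_2) = l(w_1 s_\beta) > l(w_1)$. Finally, \eqref{lambda} identifies the Morse index with the Weyl length: $\lambda(p_i) = l(w_i)$. Combining these,
\[
\lambda(p_1) \;=\; l(w_1) \;<\; l(w_2) \;=\; \lambda(p_2),
\]
which is exactly the assertion that $\varphi$ is index increasing.

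There is no real obstacle here; the work has essentially been done by the explicit formulas \eqref{positive}--\eqref{lambda} together with the standard combinatorial fact~1 about lengths. The only point that requires a brief check is that one may assume $\beta \in R^+$ in the presentation $w_2 = w_1 s_\beta$, and that $\eta(p_1,p_2)$ is correctly identified with $w_1(\beta)$ rather than $-w_1(\beta)$; both follow from what has already been established in Remark~\ref{rswitch}.
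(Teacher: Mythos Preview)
Your proposal is correct and follows essentially the same argument as the paper: write the edge as $w_2 = w_1 s_\beta$ with $\beta \in R^+$, use that $\psi(p_2)-\psi(p_1)$ is a positive multiple of $\eta(p_1,p_2)=w_1(\beta)$ to conclude $w_1(\beta)\in R^+$, apply fact~1 to get $l(w_2)>l(w_1)$, and finish with \eqref{lambda}. The only cosmetic difference is that the paper first reduces to the case where $\varphi$ attains its minimum at $p_0$ (since the formulas \eqref{positive}--\eqref{lambda} were derived under that normalization), which you are implicitly using as well.
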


\begin{proof}
Assume that $\varphi$ achieves its minimum value at $p_0\in \td$, and define
$R^+=\{\alpha\in R\mid (\alpha,\xi)>0\}$.
Consider an edge $(p_1, p_2) = (w_1(p_0), w_2(p_0)) \in \gkme$ so that $\varphi(p_2)
> \varphi(p_1)$.
By Lemma~\ref{coGKM}, there
exists $\beta \in R^+$
so that 
$w_2 = w_1 s_\beta$ and $\eta(p_1,p_2) = w_1(\beta)$.
Since  $\psi(p_2) -\psi(p_1)$ 
is a positive multiple of $\eta(p_1, p_2)$, the fact
that $\varphi(p_2) > \varphi(p_1)$ implies that 
$ w_1(\beta)\in R^+$. 
By Fact 1.\ above, this implies that $l(w_2) =
l( w_1 s_\beta) > l(w_1)$.
Therefore, Lemma~\ref{llambda} implies that $\lambda(p_2) > \lambda(p_1)$,
as required.
\end{proof}

Given  a choice of positive roots $R^+$,
$\Oo_{p_0}$ can be identified as a $T$-space with the flag variety $G_{\C}/B$, 
where $G_{\C}$ is the complexification of $G$
and $B$ is the Borel subgroup associated to $R^+$.
In the Schubert calculus literature, there is a well-know basis for $H_T^*(G_{\C}/B;\Z)$, whose
elements are called \emph{equivariant Schubert classes}. 

Define 
\begin{equation}\labell{def Lambda}
\Lambda_w^-=\prod\{\eta\in R^+\mid w^{-1}(\eta)\in -R^+\}.
\end{equation}
For every $w \in W$
there exists a unique element $K_w\in H_T^{2 l(w)}(G_{\C}/B;\Z)$ satisfying the following conditions:
\begin{itemize}
\item[$(1)'$] $K_w(w')=0$ for all $w'\in W\setminus\{w\}$ such that $l(w')\leq l(w)$.
\item[$(2)'$] $K_w(w)=\Lambda_w^-.$
\end{itemize}
Moreover the set $\{K_w\}_{w\in W}$ is a basis for 
$H_T^*(G_{\C}/B;\Z)$ as a module over $H^*(BT;\Z)$ (see \cite{Ku}).

\begin{proposition}\labell{canonical=Schubert}
Let the maximal torus $T$ of a compact simple Lie group $G$ 
act on a generic coadjoint orbit $\Oo_{p_0} \subset \g^*$ with moment map 
$\psi \colon \Oo_{p_0}  \to \ft^*$.
Let $\varphi = \psi^\xi$ 
be a generic component of 
the moment map that achieves its minimal value at 
$p_0 \in \ft^*$, 
and let $\{\alpha_p \}_{p\in \Oo_{p_0}^T}$ be the  
canonical classes.
 Let $G_{\C}$ be the complexification of $G$, $B$ the Borel subgroup associated to 
$R^+=\{\alpha\in R\mid (\alpha,\xi)>0\},$ 
and $\{K_w\}_{w\in W}$ be the set of equivariant Schubert classes. 
The canonical classes are the equivariant 
Schubert classes, i.e., $$\alpha_{w(p_0)}(w'(p_0)) =K_w(w') \quad\mbox {for all } w,w' \in W.$$

\end{proposition}
\begin{proof}
Since, by \cite[Lemma 2.7]{GT}, canonical classes are characterized by properties $(1)$ and $(2)$ in Definition \ref{definitioncc}, it
is sufficient to prove that the equivariant Schubert classes also satisfy these properties. 
But this follows immediately by observing that 
Lemma~\ref{coGKM}, Lemma~\ref{llambda}, and \eqref{def Lambda} 
imply that $(1)$ is equivalent to $(1)'$ and $(2)$ to $(2)'$. 
\end{proof}
The
explicit description of the canonical graph  given in
Proposition~\ref{cocan} follows immediately from
Theorem \ref{existence canonical classes},
Lemmas~\ref{coGKM} and \ref{llambda},
and the following proposition,
which 
describes
the integers $\Theta(p,q)$ that appear in
Theorem \ref{existence canonical classes}. 

\begin{proposition}\labell{theta one1}
Let the maximal torus $T$ of a compact simple Lie group $G$ 
act on a generic coadjoint orbit $\Oo_{p_0} \subset \g^*$ with moment map $\psi \colon \Oo_{p_0}  \to \ft^*$.
Let $\varphi = \psi^\xi $ 
be
a generic component of the moment map.
Then
$$
\Theta(p,q)= 1
\quad \mbox{for all }(p,q)\in \gkme \mbox{ with }\lambda(q)-
\lambda(p)=1. 
$$
\end{proposition}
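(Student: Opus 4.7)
The plan is to establish $\Theta(p,q) = 1$ by proving the stronger identity $\rho_\eta(\Lambda_p^-) = \rho_\eta(\Lambda_q^-/\eta)$ in $\Sym(\td)$, where $\eta := \eta(p,q)$.  I do this by exhibiting a bijection $\Pi_p^- \to \Pi_q^- \smallsetminus \{\eta\}$ under which corresponding elements differ by an integer multiple of $\eta$; since $\rho_\eta(\eta) = 0$, such a bijection forces the $\rho_\eta$-images of $\Lambda_p^-$ and $\Lambda_q^-/\eta$ to coincide.  Note that the elements $\Lambda_p^-$ and $\Lambda_q^-/\eta$ are generally \emph{not} equal in $\Sym(\td)$ itself (the underlying sets $\Pi_p^-$ and $\Pi_q^- \smallsetminus \{\eta\}$ can even have different cardinalities match up only via $\eta$-strings, e.g.\ in type $B_2$ when $\beta$ is non-simple), so projecting by $\rho_\eta$ is essential.

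Write $p = w_1(p_0)$ and $q = w_2(p_0) = w_1 s_\beta(p_0)$ with $\beta \in R^+$, $l(w_2) = l(w_1) + 1 = l + 1$, and $\eta = w_1(\beta) \in R^+$ (Remark~\ref{rswitch}, \eqref{lambda}, Lemma~\ref{flag index increasing}, and fact 1).  Fix a reduced expression $w_2 = \sigma_1 \cdots \sigma_{l+1}$ with $\sigma_j = s_{\alpha_j'}$ and $\alpha_j' \in R_0$.  Since $l(w_2 s_\beta) = l < l(w_2)$, the strong exchange condition for Coxeter groups (\cite{Hum}) supplies an index $k \in \{1,\ldots,l+1\}$ such that $w_1 = w_2 s_\beta = \sigma_1 \cdots \widehat{\sigma_k} \cdots \sigma_{l+1}$ is a reduced expression.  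Cancelling the $2(k-1)$ central factors in $s_\beta = w_1^{-1} w_2$ yields
$$
s_\beta \;=\; (\sigma_{l+1} \cdots \sigma_{k+1})\, \sigma_k\, (\sigma_{k+1} \cdots \sigma_{l+1}),
$$
so $\beta = \sigma_{l+1} \cdots \sigma_{k+1}(\alpha_k')$ (positivity follows from fact 2 applied to $w_2$: this element is the $k$-th entry of $R^+ \cap w_2^{-1}(-R^+) \subset R^+$).  Consequently $\eta = w_1(\beta) = \sigma_1 \cdots \sigma_{k-1}(\alpha_k')$.

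Applying \eqref{positive2} to both reduced expressions gives $\Pi_q^- = \{\eta_j^{(2)}\}_{j=1}^{l+1}$ with $\eta_j^{(2)} = \sigma_1 \cdots \sigma_{j-1}(\alpha_j')$, and $\Pi_p^- = \{\zeta_j\}_{j=1}^{l}$ with $\zeta_j = \eta_j^{(2)}$ for $j < k$ and $\zeta_j = \sigma_1 \cdots \sigma_{k-1} \sigma_{k+1} \cdots \sigma_j(\alpha_{j+1}')$ for $j \geq k$; in particular $\eta = \eta_k^{(2)} \in \Pi_q^-$.  For $j \geq k$, setting $\mu = \sigma_{k+1} \cdots \sigma_j(\alpha_{j+1}')$, the identity $\sigma_k(\mu) = \mu - \langle \mu,(\alpha_k')^\vee \rangle\, \alpha_k'$ and conjugation by $\sigma_1 \cdots \sigma_{k-1}$ give
$$
\eta_{j+1}^{(2)} - \zeta_j \;=\; -\langle \mu,(\alpha_k')^\vee \rangle\, \eta \;\in\; \Z\, \eta.
$$
Combined with the trivial identities $\zeta_j = \eta_j^{(2)}$ for $j < k$, this shows that the assignment $\zeta_j \mapsto \eta_{\phi(j)}^{(2)}$ (with $\phi(j) = j$ for $j < k$ and $\phi(j) = j+1$ for $j \geq k$) is a bijection $\Pi_p^- \to \Pi_q^- \smallsetminus \{\eta\}$ that preserves $\rho_\eta$-images.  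Taking products yields $\rho_\eta(\Lambda_p^-) = \rho_\eta(\Lambda_q^-/\eta)$, and hence $\Theta(p,q) = 1$.

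The main obstacle is the combinatorial bookkeeping of the second paragraph: one must correctly apply the strong exchange condition to obtain the deleted-index form of $w_1$, perform the cancellation in $w_1^{-1} w_2$, and verify that the resulting root is $\beta$ (not $-\beta$) using fact 2.  Once these are in place, the $\rho_\eta$-identity reduces to the single computation $\eta_{j+1}^{(2)} - \zeta_j \in \Z\, \eta$ displayed above.
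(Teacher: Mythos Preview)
Your proof is correct and follows essentially the same route as the paper's: both reduce the claim to exhibiting a bijection $\Pi_p^- \to \Pi_q^- \smallsetminus \{\eta\}$ in which corresponding elements differ by an integer multiple of $\eta$, obtain compatible reduced expressions for $w_1$ and $w_2$ via the strong exchange condition, and read off the bijection from \eqref{positive2}. The only difference is cosmetic (your notation $w_1,w_2,\sigma_j,k$ versus the paper's $w',w,s_j,j$, and your slightly more explicit use of the reflection formula $\sigma_k(\mu)=\mu-\langle\mu,(\alpha_k')^\vee\rangle\alpha_k'$).

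One small remark: your parenthetical ``the underlying sets $\Pi_p^-$ and $\Pi_q^- \smallsetminus \{\eta\}$ can even have different cardinalities'' is not right --- both sets have cardinality $l(w_1)=l$ by \eqref{positive2} --- but this is only a motivational aside and does not affect the argument.
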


\begin{proof}
Assume that $\varphi$ achieves its minimum value at $p_0\in \td$, and let
$R^+=\{\alpha\in R\mid (\alpha,\xi)>0\}$.
Consider an edge $(p, q) = (w(p_0), w'(p_0)) \in \gkme$ such that
$\lambda(q)-\lambda(p)=1$.
By Lemma~\ref{coGKM}, there
exists $\beta \in R^+$
so that 
$w' = w s_\beta$ and $\eta(p,q) = w (\beta)$.
By \eqref{switch}, we can also write $w' = s_\alpha w$, where 
$\alpha = w(\beta) = \eta(p,q)$.

Let $\Pi_p^{-}(\Oo_{p_0})$ and $\Pi_q^{-}(\Oo_{p_0})$ denote the set of weights in the negative normal bundle of $\varphi$ at $p$ and $q$, 
respectively. In order to prove that 
$\Theta(p,q)=1$, it is sufficient to find a bijection 
$f \colon  \Pi_p^{-}(\Oo_{p_0})\rightarrow \Pi_q^{-}(\Oo_{p_0})\smallsetminus \{\alpha\}$ 
such that 
for each 
$\eta \in \Pi_p^{-}(\Oo_{p_0})$,
$\eta = f(\eta) \mod \alpha$, i.e. there exists an integer $n$ such that 
$f(\eta)-\eta=n \alpha$.

Let  $w'=s_1s_2\cdots s_r$ be a reduced expression for $w'$, 
where $s_i=s_{\alpha_{i}}$ for some $\alpha_{i}\in R_0$ for all $i$.
By Lemma \ref{llambda}
$l(w')=l(w)+1>l(w)$.  Therefore,
 by the Strong Exchange Condition (see \cite[Section 5.8]{Hum}) 
$w=s_1\cdots\widehat{s_j}\cdots s_r$ for some (unique) $j$, 
where $\widehat{s_j}$ indicates that we are omitting the $j$'th term.
Let $\widetilde{w}=s_1s_2\cdots s_{j-1}$. Then by \eqref{switch} we have 
that for all $j\leq k\leq r$
$$
s_1s_2\cdots s_k = \widetilde{w}s_js_{j+1}\cdots s_k=
s_{\widetilde{w}(\alpha_j)}\widetilde{w}s_{j+1}\cdots s_k=s_{\widetilde{w}(\alpha_j)}
s_1 s_2\cdots \widehat{s_j} \cdots s_k
$$ 
In particular, $w'  = s_{\widetilde{w}(\alpha_j)} w$, and so 
$s_{\widetilde{w}(\alpha_j)}
= s_{\alpha}$.
Hence,
\begin{equation*}
s_1s_2\cdots s_k(\alpha_{k+1}) \equiv
s_1 s_2\cdots \widehat{s_j} \cdots s_k(\alpha_{k+1}) \mod \alpha \quad \mbox{for all } j \leq k < r.
\end{equation*}
Moreover, the fact that  $l(w') > l(w)$ implies that $\alpha > 0$.
Therefore,  Lemma~\ref{llambda} implies that 
$\widetilde{w}(\alpha_j) = \alpha$, 
\begin{gather*}
\Pi_q^{-}\smallsetminus \{\alpha\}=\{\alpha_1
,\ldots,s_1\cdots 
s_{j-2}(\alpha_{j-1}),s_1\cdots s_j(\alpha_{j+1}),\ldots,s_1\cdots s_{r-1}(\alpha_r)\},
\end{gather*}
and
\begin{equation*}
\Pi_p^{-}=\{\alpha_1,
\ldots,s_1\cdots s_{j-2}(\alpha_{j-1}),
s_1\cdots s_{j-1}(\alpha_{j+1}),\ldots,s_1\cdots\widehat{s_j}\cdots s_{r-1}(\alpha_r)\}.
\end{equation*}
The claim follows immediately.
\end{proof}

\begin{remark} \rm
Let $w_1$ and $w_2$ be two elements of the Weyl group $W$ such that 
$l(w_1) < l(w_2)$ and $w_2= w_1 s_{\beta}$, for some $\beta \in R$;
in this case, we 
will
write $w_1 \r w_2$. 
The \textit{Bruhat order} is the transitive closure of this order,
i.e., 
$w\prec w'$ in the Bruhat order if there exists a sequence of elements of the Weyl group 
$w_0,w_1,\dots,w_m$ such that $w_0=w,\;w_m=w'$ and $w_i\r w_{i+1}$ for all 
$i=0,\ldots,m-1$. By
Lemma~\ref{coGKM}, \ref{llambda} and \ref{flag index increasing},  
$w \prec w'$ exactly if there exists
an ascending path from $w$ to $w'$ in $(V,\gkme)$. 
\end{remark}

\subsection{Maps between coadjoint orbits}

Before turning to consider individual Lie groups, we need to establish a few facts about 
the maps between different coadjoint orbits.

Consider
two points $p_0$ and  $\widetilde{p}_0\in \td$ such that 
$P_{\widetilde{p}_0}\supset P_{p_0}$, where $P_{p_0}$ and
$P_{\widetilde{p}_0}$
are the stabilizers of $p_0$ and $\widetilde{p}_0$, respectively.
Let $\Oo_{p_0}$ and  $\mathcal{O}_{\widetilde{p}_0}$ be 
the coadjoint orbits through $p_0$ and  $\widetilde{p}_0$,  respectively,
and
let $(V,\gkme)$ and $(\widetilde{V},\gkmet)$ be the GKM graphs associated 
to $\mathcal{O}_{p_0}$ and $\mathcal{O}_{\widetilde{p}_0}$, 
respectively. 
Since
$\Oo_{p_0} = G/P_{p_0}$ and 
$\mathcal{O}_{\widetilde{p}_0}\simeq G/P_{\widetilde{p}_0}$, 
there is a natural projection map 
 \begin{equation*}
  \begin{array}{rccc}
\pi \colon  & \mathcal{O}_{p_0} & \r & \mathcal{O}_{\widetilde{p}_0}\\
     & g\cdot p_0 & \mapsto & g\cdot \widetilde{p}_0\,. \\
 \end{array}    
\end{equation*}

\begin{proposition}\labell{coadjoint preserving}
The natural projection  $\pi \colon   \mathcal{O}_{p_0}  \r  
\mathcal{O}_{\widetilde{p}_0}$ described above is a 
strong symplectic fibration.
\end{proposition}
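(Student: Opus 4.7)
The plan is to verify that the hypotheses of Example~\ref{example essf}(i) are satisfied: if $\pi$ is an equivariant fiber bundle intertwining compatible almost complex structures on $\mathcal{O}_{p_0}$ and $\mathcal{O}_{\widetilde{p}_0}$, then $\pi$ is automatically a strong symplectic fibration.

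First, I would verify that $\pi$ is a well-defined smooth $G$-equivariant fiber bundle. Since $P_{p_0} \subset P_{\widetilde{p}_0}$, the assignment $g P_{p_0} \mapsto g P_{\widetilde{p}_0}$ is well-defined and smooth, and $G$-equivariance is immediate from left multiplication. Moreover, by the standard fact that for closed subgroups $H \subset K \subset G$ the natural map $G/H \to G/K$ is a fiber bundle with fiber $K/H$, the map $\pi$ is a fiber bundle with fiber $P_{\widetilde{p}_0}/P_{p_0}$.

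Second, I would show that $\pi$ intertwines the natural $G$-invariant complex structures $J$ on $\mathcal{O}_{p_0}$ and $\widetilde{J}$ on $\mathcal{O}_{\widetilde{p}_0}$. By equivariance of $\pi$ and $G$-invariance of $J$ and $\widetilde{J}$, it suffices to verify $d\pi_{p_0} \circ J_{p_0} = \widetilde{J}_{\widetilde{p}_0} \circ d\pi_{p_0}$ at the single base point $p_0$. Under the standard identification $T_{p_0}\mathcal{O}_{p_0} \simeq \mathfrak{g}/\mathfrak{p}_{p_0}$, the complexified tangent space decomposes into root spaces $\bigoplus_{\alpha} \mathfrak{g}_\alpha$ for roots $\alpha$ whose root space is not contained in $\mathfrak{p}_{p_0}^{\mathbb{C}}$, and $J_{p_0}$ acts as multiplication by $i$ on the holomorphic part; the same description holds for $\widetilde{J}$ at $\widetilde{p}_0$. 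Since $\mathfrak{p}_{p_0} \subset \mathfrak{p}_{\widetilde{p}_0}$, the differential $d\pi_{p_0}$ is the natural quotient $\mathfrak{g}/\mathfrak{p}_{p_0} \twoheadrightarrow \mathfrak{g}/\mathfrak{p}_{\widetilde{p}_0}$, which maps root spaces to root spaces, hence respects the holomorphic/antiholomorphic splittings on both sides.

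Finally, since the Kostant-Kirillov forms $\omega$ and $\widetilde{\omega}$ are compatible with $J$ and $\widetilde{J}$ respectively (this is the standard Kähler structure on coadjoint orbits of compact Lie groups), Example~\ref{example essf}(i) applies and yields the conclusion.

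The main obstacle is the verification in the second step that $d\pi_{p_0}$ is compatible with the two complex structures; however, this is essentially tautological once one recalls that both complex structures arise from the standard identification of $\mathcal{O}_{p_0}$ and $\mathcal{O}_{\widetilde{p}_0}$ with generalized flag varieties $G^{\mathbb{C}}/Q_{p_0}$ and $G^{\mathbb{C}}/Q_{\widetilde{p}_0}$ (with $Q_{p_0} \subset Q_{\widetilde{p}_0}$ the corresponding parabolic subgroups), so that $\pi$ is actually a holomorphic submersion between complex manifolds.
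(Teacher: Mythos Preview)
Your proof is correct and follows essentially the same approach as the paper: both verify that $\pi$ is an equivariant fiber bundle intertwining compatible complex structures and then invoke Example~\ref{example essf}(i). The paper's proof is more terse, simply asserting that the complex structures can be chosen so that $\pi$ intertwines them, whereas you supply the supporting detail via the root-space description and the identification with generalized flag varieties.
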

\begin{proof}
It is well known that $\pi$ is a $T$-equivariant fiber bundle with symplectic fibers, isomorphic to $P_{\widetilde{p}_0}/P_{p_0}$. Moreover, 
we can choose
the complex structures $J$ and $\widetilde{J}$ on $\Oo_{p_0}$
and $\Oo_{\widetilde{p}_0}$  so that
$\pi$ intertwines  
them.
Hence, the claim  is a direct consequence of the discussion in 
Example \ref{example essf} $(i)$.
\end{proof}

Given any fixed point $q\in \Oo_{p_0}^T$, 
let $\widehat{\Oo}_q=\pi^{-1}(\pi(q))$ 
be the fiber over $\pi(q)$. 
It is  a GKM space; 
the associated GKM graph is just the restriction to the fiber of 
the GKM graph associated to $\Oo_{{p}_0}$.
These fibers are equivariantly symplectomorphic, but only with
respect to a non-trivial automorphism of the torus.
In \cite{GSZ}, the authors analyze 
projections of GKM spaces from  a combinatorial
point of view, and describe how the GKM structure of different fibers changes.
As our next result shows,  this is particularly 
well behaved
when $\Oo_{p_0}$ is a generic coadjoint orbit.

\begin{proposition}\labell{twist}
Assume that $\Oo_{p_0}$ is a generic coadjoint orbit.
Let $\varphi = \psi^\xi$ be a generic component of the moment map that achieves
its minimal value at $p_0 \in \ft^*$.
Given any $s \in \Oo_{p_0}^T$,
let $\widehat{\alpha}_s\in  H_T^{2\widehat{\lambda}(s)}(\widehat{\Oo}_{s};\Z)$
be the canonical class\footnote{
These  classes exist by  Corollary \ref{corollary formula} and 
Proposition \ref{coadjoint preserving}.
Here, $2\widehat{\lambda}(s)$ is the index of $\varphi|_{\widehat{\Oo}_s}$ 
at $s$. }
 at $s$ with respect to $\varphi|_{\widehat{\Oo}_s}$, 
regarded as a map from $\widehat{\Oo}_{s}^T$
to $H^*(BT;\Z)$.
Let  $\tau \in W$ be an element of the Weyl group such that 
$\tau(p_0)$ is the point in $\widehat{\Oo}_{\tau(p_0)}$  at which 
$\varphi|_{\widehat{\Oo}_{\tau(p_0)}}$ achieves its minimum value.
Then
$$ \widehat{\alpha}_{\tau(r)}(\tau(s))  =  
 \tau \left( \widehat{\alpha}_{r}(s) \right)
\quad \mbox{for all } r, s \in \widehat{\Oo}_{p_0}^T.$$
Here, by a slight abuse of notation,
$\tau \colon H^*(BT;\Z) \to H^*(BT;\Z)$ is the map
induced by $\tau \colon \ft^* \to \ft^*$ under the identification  
$H^*(BT;\R) \simeq \Sym(\ft^*)$.
\end{proposition}

\begin{proof}
To begin, consider any element $g$ in $N(T)$, the normalizer of $T$.
With respect to the automorphism of $T$
given by $t \mapsto g t g^{-1}$, the maps
$f \colon \Oo_{p_0} \to \Oo_{p_0}$
and $\widetilde{f} \colon 
\Oo_{\widetilde{p}_0} \to \Oo_{\widetilde{p}_0}$
given by left multiplication by $g$ are  equivariant symplectomorphisms;
moreover, $\widetilde{f} \circ \pi = \pi \circ f$.
Hence, $f$ induces an equivariant symplectomorphism from
$\widehat{\Oo}_{p_0}$ to $\widehat{\Oo}_{g \cdot p_0}$.

So assume that  $g$ represents $\tau \in W = N(T)/T$, 
that is, 
$\tau = \Ad_{g}^* \colon \ft^* \to \ft^* $.
Since $\Ad^*_{g^{-1}}$ is the transpose of $\Ad_g$, the
homomorphism  $t \mapsto g t g^{-1}$ 
and the linear transformation $\Ad_{g^{-1}}^* = \tau^{-1}$
induce the same automorphism of 
$H^*(BT;\Z) \subset  \Sym(\ft^*)$.
Hence, if we fix any $r \in \Oo_{p_0}^T$,
the map $\widehat{\Oo}_{p_0}^T \to H^*(BT;\Z)$
defined by  $s \mapsto \tau^{-1} \left( \widehat{\alpha}_{ \tau(r)}(\tau(s)) \right)$
is  an equivariant  cohomology class on $\widehat{\Oo}_{p_0}$.
In fact,  this class is the canonical class on $\widehat{\Oo}_{p_0}$
at $r$ with respect to
the restriction of 
$\psi^{\xi'}$, where $\xi' = Ad_{g^{-1}}(\xi)$.

Finally, 
since $\Oo_{p_0}$ is a generic coadjoint orbit,
the set of weights $\Pi_s(\Oo_{p_0})$ of the isotropy representation on the tangent
space at the fixed point $s \in \Oo_{p_0}$ agrees with $\Pi_{p_0}(\Oo_{p_0})$ up to
sign, that is,
 $\Pi_s(\Oo_{p_0}) \cup -\Pi_s(\Oo_{p_0}) =
 \Pi_{p_0}(\Oo_{p_0}) \cup -\Pi_{p_0}(\Oo_{p_0})$. 
Since $\pi$ is a strong symplectic fibration, 
this implies that -- up to sign --  $\Pi_s(\widehat{\Oo}_{p_0})$ agrees with
$\Pi_{p_0}(\widehat{\Oo}_{p_0})$  for all fixed points $s \in \widehat{\Oo}_{p_0}$.
Because $p_0$ is the point in $\widehat{\Oo}_{p_0}$
where both  $\psi^\xi$ and $\psi^{\xi'}$ 
achieve their minimum value,
this implies that 
$(\alpha,\xi)>0$
exactly if 
 $(\alpha,\xi')>0$
 for every weight $\alpha \in \Pi_s(\widehat{\Oo}_{p_0})$
and  each fixed point  $s \in \widehat{\Oo}_{p_0}$.
Hence, by \cite[Remark 2.4]{GT} the canonical classes 
on $\widehat{\Oo}_{p_0}$
with respect to the restriction of $\psi^{\xi'}$
are exactly 
the canonical classes with respect to the restriction of $\psi^\xi$.

\end{proof}

Finally, given
the projection $\pi\colon  \mathcal{O}_{p_0}  \r  
\mathcal{O}_{\widetilde{p}_0}$,
 we can describe 
explicitly  how to lift  ascending paths in $(\Vt,\gkmet)$.

\begin{lemma} 
\labell{liftpath}
Let $\widetilde{\varphi} \colon \Oo_{\widetilde{p}_0} \to \R$ 
be a generic component of the moment map.
Given $p \in V$ and an ascending path $\gt$ in $(\Vt,\gkmet)$ that
begins at $\pi(p)$, there exists  
a unique path $\gamma$ of length $|\gt|$ in $(V,\gkme)$ such that 
\begin{itemize}
\item $\gamma$ begins at $p$,
\item $V(\pi(\gamma)) = V(\gt)$, and 
\item $\lambda(\gamma_{i+1}) > \lambda(\gamma_i)$ for all $i$.
\end{itemize}
If $\gt_{i+1}=s_{\beta_i}(\gt_{i})$ for  $\beta_i \in R$ for 
each $1 \leq i \leq |\gt|$,
then the endpoint of $\gamma$ is $w(p)$, where
$$w = s_{\beta_{|\gt|}} s_{\beta_{|\gt|- 1}} \dots s_{\beta_1}.$$
\end{lemma}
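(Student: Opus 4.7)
The idea is to lift $\gt$ edge by edge, using that the reflections $s_{\beta_i}$ act on both orbits and that $\pi$ intertwines these actions. I set $\gamma_1 = p$ and $\gamma_{i+1} = s_{\beta_i}(\gamma_i) \in \Oo_{p_0}^T$ recursively, viewing the $\gamma_i$'s as elements of $\ft^*$ on which $W$ acts. With $w_1 = \id$ and $w_{i+1} = s_{\beta_i}w_i$ this gives $\gamma_i = w_i(p)$; in particular, $\gamma_{|\gt|+1} = s_{\beta_{|\gt|}}\cdots s_{\beta_1}(p) = w(p)$, as required. Since $\pi \colon G/P_{p_0} \to G/P_{\widetilde{p}_0}$ is $G$-equivariant, it is equivariant for $W = N_G(T)/T$, so an easy induction gives $\pi(\gamma_i) = \gt_i$ for every $i$, and in particular $V(\pi(\gamma)) = V(\gt)$.

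Next I check that $(\gamma_i,\gamma_{i+1}) \in \gkme$ and that the index strictly increases along $\gamma$. Distinctness $\gamma_i \neq \gamma_{i+1}$ follows from $\pi(\gamma_i) = \gt_i \neq \gt_{i+1} = \pi(\gamma_{i+1})$, and the description of the GKM graph of $\Oo_{p_0}$ recalled at the beginning of this section then provides the edge, since $\gamma_{i+1}$ is obtained from $\gamma_i$ by the reflection $s_{\beta_i}$. For the index: Proposition~\ref{coadjoint preserving} and Lemma~\ref{GKMlambda} show that $\pi$ is weight preserving; since $\gt$ is ascending, Lemma~\ref{increasing path} applied to the horizontal edge $(\gamma_i,\gamma_{i+1})$ yields $\varphi(\gamma_i) < \varphi(\gamma_{i+1})$, and Lemma~\ref{flag index increasing} then gives $\lambda(\gamma_i) < \lambda(\gamma_{i+1})$.

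For uniqueness, let $\gamma'$ be any other path satisfying the three hypotheses. Because $\gt$ is ascending, $V(\gt)$ contains exactly $|\gt|+1$ distinct vertices; since $\pi(\gamma')$ is a walk of length $|\gt|$ whose vertex set equals $V(\gt)$, no vertex is repeated and every edge of $\gamma'$ is horizontal. The index-increasing assumption on $\gamma'$, together with Lemma~\ref{flag index increasing}, forces $\gamma'$ to be ascending for $\psi^\xi$; Lemma~\ref{increasing path} then transfers this to $\pi(\gamma')$, which is therefore an ascending walk on $V(\gt)$. Any such walk is determined by its vertex set and starting point, so $\pi(\gamma'_i) = \gt_i$ for all $i$.

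The main obstacle is the last step of uniqueness in the non-generic setting, where $W$ acts on $\Oo_{\widetilde{p}_0}^T$ through a quotient, and several reflections can realize the same edge of $\gkmet$. Weight preservation of $\pi$, which comes for free from its being a strong symplectic fibration between coadjoint orbits, resolves the ambiguity: each edge $(\gamma'_i,\gamma'_{i+1})\in \gkme$ lying over $(\gt_i,\gt_{i+1})\in \gkmet$ must carry the root weight $\pm\beta_i$, and so is realized by $s_{\beta_i}$. This gives $\gamma'_{i+1} = s_{\beta_i}(\gamma'_i)$, and induction from $\gamma'_1 = p$ yields $\gamma' = \gamma$.
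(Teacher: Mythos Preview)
Your proof is correct and follows essentially the same approach as the paper's. The paper invokes the equivariant fiber bundle property to assert a unique lift in one line, then uses Lemmas~\ref{increasing path} and~\ref{flag index increasing} for the index condition and $W$-equivariance of $\pi$ for the endpoint formula; you unpack the unique-lift step explicitly, constructing $\gamma$ via the reflections $s_{\beta_i}$ and deducing uniqueness from weight preservation, which is exactly what underlies the paper's fiber-bundle assertion.
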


\begin{proof}
Fix $p \in V$.
Since $\pi$ is an equivariant fiber bundle, 
there is a unique 
 lift
$\gamma$ of each path $\gt$ starting at 
$p$, 
that is, a unique
path $\gamma$ of length $|\gt|$ in $(V,\gkme)$ that starts at $p$
such that $\pi(\gamma_i,\gamma_{i+1})=(\gt_i,\gt_{i+1})$ for all $i$.
By Lemma \ref{increasing path} and Lemma~\ref{flag index increasing},
$\lambda(\gamma_{i+1}) > \lambda(\gamma_i)$ for all $i$ exactly if
$\gt$ is ascending; this proves the first claim.
The second claim  is an consequence of the fact
that 
$\pi \colon \mathcal{O}_{p_0} \to \mathcal{O}_{\widetilde{p}_0}$  
satisfies $\pi(w(p_0))=w(\pi(p_0))$ for all $w\in W$. 
\end{proof}


\subsection{Generic coadjoint orbits of type $A_n$}
\labell{An}
As we will see below, a generic coadjoint orbit of $SU(n)$ is a tower
of complex projective spaces 
over $\Z$.
Therefore,  Theorem~\ref{tower symp} (together with the results of \S\ref{cangraph})
immediately implies that in this case each restriction of any equivariant Schubert class 
can be expressed as a sum of terms $\Xi(\gamma)$ over certain paths $\gamma$,
where each term is the product of distinct positive roots.
In Proposition~\ref{formula an} below, we give this description explicitly;
this formula is equivalent to a particular case of the combinatorial formula given in \cite{AJS} (Appendix D.3) and \cite{B}, as proved by  
Zara in \cite{Za}.

Let $G=SU(n+1)$, and let $T\subset G$ be the subtorus of diagonal matrices.
Under the natural identification of the dual of the Lie algebra of $T$ as
$\td = 
\big\{ \mu \in (\R^{n+1})^* \; \big| \;  \sum_{i=1}^{n+1} \mu_i = 0
\big\}$,
the roots are the vectors $x_i - x_j \in \td$ for all $1\leq i\neq j\leq n$.
Here, and throughout this section,
 $\{x_i\}_{i=1}^{n+1}$ is the standard basis of $(\R ^{n+1})^{*}$. 
The Weyl group $\mathcal{S}_{n+1}$ of $G$ is the group of permutations of $n+1$ elements.

\begin{proposition}\labell{formula an}
Let $B \subset G_\C$ be the Borel subgroup associated to the positive roots 
$R^+ = \{ x_i - x_j \mid 1 \leq i < j \leq n+1\},$ where $G_\C$ is the complexification of 
$G = SU(n+1)$.
Given
$w$ and $w'$ in $\mathcal{S}_{n+1}$, let 
$K_w \in H_T^{ 2 l(w)}(G_\C/B;\Z)$ 
be the equivariant Schubert
class associated to $w$.
Let $C(w,w')$ be the set of tuples 
$\underline{\sigma} = (\sigma_1,\ldots,\sigma_{|\underline{\sigma}|+1}) 
\in \left(\mathcal{S}_{n+1}\right)^{|\underline{\sigma}|+1}$ 
such that $\sigma_1 = w$, $\sigma_{|\underline{\sigma}|+1} = w'$, and
the following properties hold
for all $1 \leq i \leq |\underline{\sigma}|$:
\begin{itemize}
\item  $l(\sigma_{i+1})=l(\sigma_i)+1$;
\item $\sigma_{i+1} = \sigma_i \, s_{x_{h_i} -x_{k_i}}$ 
for some $1 \leq h_i < k_i \leq n+1$; 
and
\item $h_i \leq h_{i+1}$.
\end{itemize}
\begin{itemize}
\item[(1)]
For all $w$ and $w'$ in 
$\mathcal{S}_{n+1}$,
\begin{gather*}
K_w(w') = \sum_{\underline{\sigma} \in C(w,w')}\Xi(\underline{\sigma}),\quad\mbox{where }\\
\Xi(\underline{\sigma})= \Lambda_{w'}^-
\, \left(\prod_{i=1}^{|\underline{\sigma}|}\frac{1}{x_{\sigma_{i}(h_i)}-
x_{\sigma_{|\underline{\sigma}|+1}(h_i)}} \right)
\quad \mbox{for all }\underline{\sigma}\in C(w,w').  \\
\end{gather*}
\item[(2)] 
For all 
 $\underline{\sigma}\in C(w,w')$,  \ 
$\Xi(\underline{\sigma})$
is the product of distinct positive roots.
\end{itemize}
\end{proposition}

 \begin{proof}
For each $0 \leq j \leq n$,
fix a point 
$$\mu^j
\in \td \quad \mbox{such that }
\mu^j_1<\cdots<\mu^j_{j}<\mu_{j+1}^j=\cdots=\mu_{n+1}^j;$$
for simplicity assume that $\mu_{j+1}^j = \mu^j_j + 1$.
Let $(\mathcal{O}_{\mu^j},\omega_j,\psi_j)$ be the coadjoint orbit through
$\mu^j$ for each $j$.
Observe that $\Oo_{\mu^0}$ is a single point and that
$\mathcal{O}_{\mu^n}$ is isomorphic to $\mathcal{F}l(\C^{n+1})$,
the variety of complete flags in $\C^{n+1}$.
The stabilizer of $\mu^j$ is
$$P_{\mu^j}=S\big(U(1)\times \ldots \times U(1)\times U(n-j+1)\big) 
\quad \mbox{for all } j;$$ 
in particular, $P_{\mu^{j+1}}  \subset P_{\mu^{j}}$.
By Proposition \ref{coadjoint preserving}, 
the natural projection map
$\rho_j\colon \Oo_{\mu^{j+1}}  \r \Oo_{\mu^j} $ 
is a strong symplectic fibration 
with fiber  $P_{\mu^{j}}/P_{\mu^{j+1}} \simeq \C P^{n-j}$
for all $0 \leq j < n$. 
So $\Oo_{\mu^n}$ is a tower of complex
projective spaces 
over $\Z$.

Each element $\sigma \in \mathcal{S}_{n+1}$
can be represented in 
one line notation by $\sigma=\sigma(1),\ldots,\sigma(n+1)$;
the action of $\sigma$
on a point 
$\mu =\sum_{i=1}^{n+1}\mu_ix_i\in \mathfrak{t}^*$ is given by
$\sigma(\mu)=\sum_{i=1}^{n+1}\mu_ix_{\sigma(i)}$.
Let $\pi_j=\rho_j\circ \rho_{j+1}\circ \cdots \circ \rho_{n-1}\colon \Oo_{\mu^n}\r \Oo_{\mu^j}$,
and define
$$
h(\sigma,\sigma')=\min\{j\in \{1,\ldots,n\}\mid \pi_j( \sigma (\mu^n))\neq \pi_j(\sigma'(\mu^n))\}
\  
\forall \ 
\sigma \neq \sigma'
\mbox{ in }\mathcal{S}_{n+1}.
$$
Fix any distinct $\sigma$ 
and $\sigma'$ 
in $\mathcal{S}_{n+1}$.
Since $\pi_j(\sigma(\mu^n))=\sigma(\mu^j)$ and
 $\mu_i^{j} = \mu_{j+1}^{j}$  exactly if $i > j$,
$\pi_j( \sigma(\mu^n))=\pi_j(\sigma'(\mu^n))$ exactly
if 
$\sigma(i)=\sigma'(i)$ for all $0\leq i \leq j$.
Hence,   
\begin{gather*}
h(\sigma, \sigma')=
\min\{j \in \{1,\dots,n\} \mid \sigma(j) \neq \sigma'(j)\};
\quad\mbox{in particular } \\
h(\sigma, \sigma s_{x_h-x_k})= h \quad \mbox{for all }
1 \leq h<k \leq n+1 .
 \end{gather*}
Let $\pb_j = \pi^*(\psi_j) \colon\Oo_{\mu^n} \to \ft^*$ for all $j$. Since $\psi_j\colon\Oo_{\mu^j}^T\r \td$ is the inclusion map,
$\pb_j(\sigma(\mu^n))= \sum_{i=1}^{n+1}\mu_i^jx_{\sigma(i)}$ for all $j$.
Since $\sum_{m=1}^{n+1} ( x_{\sigma'(m)} - x_{\sigma(m)}) = 0$ 
and
$\mu_j^j + 1 = \mu_{j+1}^j=\ldots=\mu_{n+1}^j$, this implies that 
for all $j$
\begin{gather*}
\begin{split}
\labell{andiff}
\pb_j(\sigma'(\mu^n)) - \pb_j(\sigma(\mu^n)) &
= \sum_{m=1}^{n+1} \mu^j_m (x_{\sigma'(m)}-x_{\sigma(m)}) 
\\
& = \sum_{m=1}^{n+1} (\mu^j_m - \mu^j_{j+1}) (x_{\sigma'(m)}-x_{\sigma(m)})  \\
& = \sum_{m=1}^{j} (\mu^j_m - \mu^j_{j+1}) (x_{\sigma'(m)}-x_{\sigma(m)})\,  
; \mbox{ therefore, }  
\end{split} \\
\pb_j(\sigma'(\mu^n))-\pb_j(\sigma(\mu^n))=
x_{\sigma(j)}-x_{\sigma'(j)}
\quad  \mbox{for all }
 j \leq h(\sigma,\sigma'). 
 \end{gather*}
Thus, 
for all $1 \leq h < k \leq n+1$ we have
\begin{equation*}
\pb_{h}(\sigma s_{x_h-x_k}(\mu^n))-
\pb_{h}(\sigma(\mu^n)) 
=
x_{\sigma(h)}- x_{\sigma(k)}
= \sigma(x_h - x_k). 
\end{equation*}

To conclude,
let $\varphi = \psi_n^\xi \colon \Oo_{\mu^n} \to \R$ 
be a generic component of the moment
map that achieves its minimum value at $\mu^n$.
By the 
definition of $\mu^n$,
the set $R^+$ coincides with $\{\alpha\in R\mid (\alpha,\xi)>0\}$.
So by Proposition \ref{canonical=Schubert}, canonical
classes on $\Oo_{\mu^n}$ correspond to equivariant Schubert
classes on $G_{\C}/B$ through the usual identification of $\Oo_{\mu_n}^T$ with $W$.
Both claims now follow
directly
from Theorem \ref{tower symp} and Proposition~\ref{cocan}.
(Here, we also use the fact that 
$h(\sigma_i,\sigma_{i+1}) \leq h(\sigma_i,\sigma_{|\underline{\sigma}|+1})$
for any $\underline{\sigma}=(\sigma_1,\dots,\sigma_{|\underline{\sigma}|+1})
\in C(w,w')$.)
\end{proof}


\subsection{Generic coadjoint orbits of type $C_n$}

Let $G=Sp(n)$ be the symplectic group, i.e. the quaternionic unitary group $U(n; \mathbb{H})$. 
As we will see below, a generic coadjoint orbit of $Sp(n)$ is a tower of
complex projective spaces
over $\Z$. Therefore Theorem~\ref{tower symp} (together with the results of \S\ref{cangraph})
immediately implies that in this case each restriction of any equivariant Schubert class 
can be expressed as a sum of terms $\Xi(\gamma)$ over certain paths $\gamma$,
where each term is the product of distinct positive roots.
In Proposition~\ref{formula cn2} below, we give this description explicitly, 
cf.\ \cite{Za}.

Let 
$T \subset G$ be a maximal torus.
We can identify the dual of the Lie algebra of $T$ as $\td = (\R^n)^*$;
the roots are the vectors $\pm x_i \pm x_j\in \td$ 
and $\pm 2x_i\in \td$ for all $1\leq i \neq j \leq n$.
Here, and throughout this section, $\{x_i\}_{i=1}^n$ is the standard basis of $(\R^n)^*$.
The Weyl group $W$ of $G$
is the group of signed permutations of $n$ elements.
Each element $\tau \in W$
can be represented in 
one line notation
by $\tau=(-1)^{\epsilon _1}\sigma(1),\ldots, (-1)^{\epsilon _n}\sigma(n)$, 
where $\epsilon_i\in \{0, 1\}$ for all $i$ and 
$\sigma \in \mathcal{S}_n$.

\begin{proposition}\labell{formula cn2}
Let $B\subset G_{\C}$ be the Borel subgroup associated to the positive roots 
$R^+=\{x_i\pm x_j\mid 1\leq i<j\leq n\}\cup\{2x_i\mid 1\leq i\leq n\}$, where $G_{\C}$
is the complexification of $G=Sp(n)$.
Given $w$ and $w'$ in $W$, let $K_w\in H_T^{2l(w)}(G_{\C}/B;\Z)$ be the equivariant Schubert class
associated to $w$. Let $C(w,w')$ be the set of tuples $\underline{\tau}=(\tau_1,\ldots,\tau_{|\underline{\tau}|+1})\in W^{|\underline{\tau}|+1}$
such that $\tau_1=w$, $\tau_{|\underline{\tau}|+1}=w'$ and the following properties hold for all $1\leq i\leq |\underline{\tau}|$:
\begin{itemize}
\item $l(\tau_{i+1})=l(\tau_i)+1$;
\item either $\tau_{i+1}=\tau_is_{x_{h_i}\pm x_{k_i}}$ for some $1\leq h_i<k_i\leq n$ or
$\tau_{i+1}=\tau_is_{2x_{h_i}}$ for some $1\leq h_i\leq n$; and
\item $h_i\leq h_{i+1}$.
\end{itemize}
Let $\tau_i=(-1)^{\epsilon_1^i}\sigma_i(1),\ldots,(-1)^{\epsilon_n^i}\sigma_i(n)$, where $\sigma_i\in \mathcal{S}_n$ and $\epsilon_j^i\in \{0,1\}$ for all $i$ and $j$.
\begin{itemize}
\item[(1)] For all $w$ and $w'$ in $W$,
\begin{gather*}
K_w(w') = \sum_{\underline{\tau} \in C(w,w')}
\Xi(\underline{\tau}),\quad\mbox{where }\\
\Xi(\underline{\tau})=\Lambda_{w'}^- 
\left(
\prod_{i=1}^{|\underline{\tau}|}
\frac{1}{
(-1)^{\epsilon_{h_i}^{|\underline{\tau}|+1}}
x_{
\sigma_{|\underline{\tau}|+1}
(h_i)
}-
(-1)^{\epsilon_{h_i}^{i}}
x_{\sigma_{i}(h_i)}
}
\right)
 \ 
\mbox{for all }
\underline{\tau}\in C(w,w').
\end{gather*}
\item[(2)] 
For all 
$\underline{\tau}\in C(w,w')$, \ 
$\Xi(\underline{\tau})$
is the product of distinct positive roots.
\end{itemize}
\end{proposition}
\begin{proof}
For each $0\leq j\leq n$, fix a point
$$
\mu^j \in \td\quad\mbox{such that }\mu^j_1<
\dots
<\mu^j_j<0=\mu^j_{j+1}=
\dots
=\mu^j_n;
$$
for simplicity assume that $\mu_j^j=-1$. 
Let $(\Oo_{\mu^j},\omega_j,\psi_j)$ be the coadjoint orbit through $\mu^j$ for each $j$.
Observe that $\Oo_{\mu^0}$ is a single point.
The stabilizer of $\mu^j$ is
$$
P_{\mu^j}= S^1\times\ldots\times S^1\times U(n-j;\mathbb{H})\quad\mbox{for all }j\;; 
$$
in particular, $P_{\mu^{j+1}}\subset P_{\mu^{j}}$. 
By Proposition \ref{coadjoint preserving}, 
the natural projection map
$\rho_j\colon \Oo_{\mu^{j+1}}  \r \Oo_{\mu^j} $ 
is a strong symplectic fibration with fiber  $P_{\mu^{j}}/P_{\mu^{j+1}}\simeq \CP^{2(n-j)-1}$
for all $0 \leq j < n$. So $\Oo_{\mu^n}$ is a tower of complex projective spaces 
over $\Z$.

Let $\tau=(-1)^{\epsilon _1}\sigma(1),\ldots, (-1)^{\epsilon _n}\sigma(n)$, 
where $\epsilon_i\in \{0, 1\}$ for all $i$ and 
$\sigma \in \mathcal{S}_n$; 
the action of $\tau$
on a point $\mu=\sum_{i=1}^n\mu_ix_i\in \td$ is given by
$\tau(\mu)=\sum_{i=1}^n (-1)^{\epsilon_i}\mu_ix_{\sigma(i)}$.
Let $\pi_j=\rho_j\circ \rho_{j+1}\circ \cdots \circ \rho_{n-1}\colon \Oo_{\mu^n}\r 
\Oo_{\mu^j}$,
and define
$$
h(\tau,\tau')=\min\{j\in \{1,\ldots,n\}\mid \pi_j( \tau (\mu^n))\neq \pi_j(\tau'(\mu^n))\}
\  
\forall \
\tau \neq \tau'
\mbox{ in }W.
$$
Let
$\tau=(-1)^{\epsilon_1}\sigma(1),\ldots,(-1)^{\epsilon_n}\sigma(n)$ and 
$\tau'=(-1)^{\epsilon'_1}\sigma'(1),\ldots,(-1)^{\epsilon'_n}\sigma'(n)$ 
in $W$ 
be distinct.
Since $\pi_j(\tau(\mu^n))=\tau(\mu^j)$ and  $\mu_i^{j} = \mu_{j+1}^{j}=0$
exactly if $i > j$,
$\pi_j( \tau(\mu^n))=\pi_j(\tau'(\mu^n))$ exactly
if 
$\sigma(i)=\sigma'(i)$ and $\epsilon_i=\epsilon'_i$ for all $0\leq i \leq j$ .
Hence,   
\begin{gather*}
h(\tau, \tau')=
\min\{j \in \{1,\dots,n\} \mid \sigma(j) \neq 
\sigma'(j)\mbox{ or }\epsilon_j\neq \epsilon'_j\};
\quad \mbox{in particular,} \\
h(\tau, \tau s_{x_h \pm x_k})= h 
\quad \mbox{for all } 
1 \leq h<k \leq n, 
\quad \mbox{and} 
\\
h(\tau,\tau s_{2 x_h})=h\quad\mbox{for all }
1\leq h \leq n.
\end{gather*}
Let $\pb_j = \pi^*(\psi_j) \colon\Oo_{\mu^n} \to \ft^*$ for all $j$.
Since $\psi_j \colon \Oo_{\mu^j}^T \to \ft^*$ is the inclusion map,
$\pb_j(\tau(\mu^n))
= \sum_{i=1}^{n}\mu_i^j(-1)^{\epsilon_i}x_{\sigma(i)}$ for all $j$.
Hence,
\begin{equation*}
\labell{andiff2}
\pb_j(\tau'(\mu^n)) - \pb_j(\tau(\mu^n))
= \sum_{m=1}^{j} \mu^j_m \left((-1)^{\epsilon'_m}x_{\sigma'(m)}-(-1)^{\epsilon_m}x_{\sigma(m)}\right) 
 \quad
\mbox{for all } j, 
\end{equation*}
and so
\begin{equation*}
\pb_j(\tau'(\mu^n))-\pb_j(\tau(\mu^n))=
 \left((-1)^{\epsilon_j}x_{\sigma(j)}-(-1)^{\epsilon'_j}x_{\sigma'(j)}\right) 
\quad \mbox{for all } j \leq h(\tau,\tau');
\end{equation*}
therefore
\begin{equation*}
\pb_{h}(\tau s_{x_h \pm x_k} (\mu^n)))-
\pb_{h}(\tau(\mu^n)) 
=
(-1)^{\epsilon_h} x_{\sigma(h)}\mp (-1)^{\epsilon_k}  x_{\sigma(k)}
= \sigma(x_h \pm x_k)
\end{equation*}
for all  $1 \leq h < k \leq n$, and
\begin{equation*}
 \pb_{h}(\tau s_{2x_h} (\mu^n))-
\pb_{h}(\tau(\mu^n)) =(-1)^{\epsilon_h}(2x_{\sigma(h)})
= \sigma( 2 x_h)
\quad \mbox{for all } 1 \leq h \leq n.
\end{equation*}
To conclude, let $\varphi = \psi_n^\xi \colon \Oo_{\mu^n} \to \R$ 
be a generic component of the moment
map 
that
achieves its minimum value at $\mu^n$.
By the definition of $\mu^n$, the set $R^+$ coincides with $\{\alpha\in \R\mid (\alpha,\xi)>0\}$.
So by Proposition \ref{canonical=Schubert}, canonical classes on $\Oo_{\mu^n}$
correspond to equivariant Schubert classes on $G_{\C}/B$ through the usual
identification of $\Oo_{\mu^n}^T$ with $W$.
The claim now follows directly
from Theorem \ref{tower symp}
and Proposition~\ref{cocan}.
(Here, we use the fact that 
$h(\tau_i,\tau_{i+1}) \leq h(\tau_i,\tau_{|\underline{\tau}|+1})$
for any $\underline{\tau}=(\tau_1,\dots,\tau_{|\underline{\tau}|+1})
\in C(w,w')$.)

\end{proof}


\subsection{Generic coadjoint orbits of type $B_n$}\labell{section Bn}

The main result of this section is an inductive 
\textit{positive integral formula} that expresses the restrictions
of the equivariant Schubert classes on a generic coadjoint orbit of type $B_n$
in terms  of
products of distinct positive roots
with positive integer coefficients, and 
the restriction of equivariant Schubert classes on
a generic coadjoint orbit of  type $B_{n-1}$.
To find this formula, we will apply
Corollary~\ref{corollary formula} to the natural projection
from a generic coadjoint orbit of $SO(2n+1)$ to 
$Gr_2^+(\R^{2n+1})$,
the Grassmannian of oriented
$2$-planes in $\R^{2n+1}$.

Let $G=SO(2n+1)$, $T\subset G$ be a maximal torus, and
$W$ be the associated Weyl group;
assume $n > 1$.
We can identify the dual of the Lie algebra of $T$ with $(\R^n)^*$ so
that the set of roots is
$$R = \{ \pm x_i\pm x_j \mid 1 \leq i < j \leq n\} \cup
\{ \pm x_i \mid 1 \leq i \leq n \} \subset \td.$$

Let $\Gh=SO(2n-1)$.
We can identify the dual of the Lie algebra of a maximal torus $\widehat{T}$ of $\Gh$
with the set of $(a_1,\dots,a_n) \in \ft^*$ such that $a_1 = 0$.
This identifies the roots of $\Gh$ with the set
$$\Rh = \{ \pm x_i \pm x_j \mid 2 \leq i  < j \leq n\} \cup
 \{ \pm x_i \mid 2 \leq i \leq n\} \subset R,$$
and
the Weyl group $\Wh$ of $\Gh$ with
the subgroup of $W$ generated by reflections across the roots in 
$\Rh$; moreover, it
induces a map from $H^*(B\widehat{T};\Z)$ to $H^*(BT;\Z)$.
Equivalently, 
let $\Vt = \{ \pm x_1, \pm x_2, \dots, \pm x_n\}$;
$\Wh$ is the kernel of the map
$\pi \colon W \to \Vt$ 
defined by 
$\pi(w) = w(-x_1)$.

To state our main theorem, we will 
need several additional definitions.
Let 
$$R^+ = \{x_i \pm x_j \mid 1 \leq i < j \leq n \} \cup 
\{x_i \mid 1 \leq i \leq n\} \subset R$$ be the set of positive roots.
Define
\begin{equation}\label{BE}
E = \{( \tau, \tau s_\beta) \in W \times W \mid l(\tau s_\beta) = l (\tau) + 1 
\mbox{ and } \beta \in R\}.
\end{equation}
Given $w$ and $w' \in W$, 
let $\overline{\Sigma}(w,w')$ denote
the set of paths $\gamma = (\gamma_1,\dots, \gamma_{|\gamma|+1})$
from $w$ to $w'$ in $(W,E)$ such that $\pi(\gamma_i) \neq \pi(\gamma_{i+1})$
for all $i$.  Equivalently,  $\overline{\Sigma}(w,w')$
is the set of paths from $w$ to $w'$ such that each edge is of the form
$( \tau, \tau s_\beta)$, where $ l(\tau s_\beta) = l (\tau) + 1 $
and  $\beta \in R \smallsetminus \Rh$.
Given a sequence $\gt \in \Vt^k,$ 
let $V(\gt)$ be the set of ``vertices" of $\gt$:
$$V(\gt) =   \{ \gt_1,\dots,\gt_{k}\} \subset  \Vt.$$  
We need the following lemma, 
which we prove on page~\pageref{proveBov}. 

\begin{lemma}\labell{B order vertices}
Given any  path $\gamma = (\gamma_1,\gamma_2,\dots, \gamma_{|\gamma|+1}) \in 
\overline{\Sigma}(w,w')$, the sequence  $\gt =
\pi(\gamma) =
(\pi(\gamma_1),\pi(\gamma_2),\dots,\pi(\gamma_{|\gamma|+1}))$
is a subsequence of $(-x_1, -x_2,\ldots, -x_n,x_n,\dots,x_2,x_1)$.
\end{lemma}

\begin{definition}\label{B complete}
A path $\gamma \in \overline{\Sigma}(w,w')$  
with $\pi(\gamma) = \gt$
is 
\textbf{incomplete} if both the following 
conditions are satisfied:
\begin{itemize}
\item[(i)] $\{ \pi(w'), -\pi(w') \}\subset V(\gt)$, and
\item[(ii)] $\gamma$  does not contain any edge $e$ of the form
$(\tau, \tau s_{x_1}),$ 
that is, an edge such that $\pi(e) =
(-x_j,x_j)$ for some $j \in \{1,\dots,n\}$. 
\end{itemize}
Otherwise $\gamma$ is  \textbf{complete}. 
\end{definition}

\begin{definition}
A path $\gamma\in \overline{\Sigma}(w,w')$ 
with $\pi(\gamma) = \gt$
is 
\textbf{relevant} if either it is complete or 
if it  is incomplete and 
$x_{k(\gamma)+1}\in V(\gt)$, where\footnote{ 
Observe 
that if $\gamma$ is incomplete then
condition (i) in the definition above implies 
that $\{j\mid \{-x_j,x_j\}\subset V(\gt)\} \neq \emptyset$ 
and 
-- by Lemma~\ref{B order vertices} --
condition (ii) implies that $k(\gamma)<n$.}
$k(\gamma) = 
\max\{j\mid \{-x_j,x_j\}\subset V(\gt)\}$. 
\end{definition}

Finally, given 
a path $\gamma\in \overline{\Sigma}(w,w')$, define 
\begin{gather*}
P(\gamma)=
\Lt^-_{\pi(w')}
\left(
 \prod_{i=1}^{|\gamma|}\frac
{1}
{\pi(w')-\pi(\gamma_i)}
\frac
{\pi(\gamma_{i+1})-\pi(\gamma_i)}
{\eta(\gamma_i,\gamma_{i+1})}
\right), 
\end{gather*}
where $\Lt^-_{\pi(w')} $ is the product of the $\alpha \in R^+ $ such that
$ \langle \pi(w'), \alpha \rangle > 0$ and \\ $\pi(s_{\alpha}(w'))\neq \pi(w').$

The main theorem of this section can be stated as follows.

\begin{theorem}\labell{mainBnprecise} 
Let $B \subset G_\C$ and $\Bh \subset \Gh_\C$ be the Borel subgroups 
associated to  $R^+$ and $R^+ \cap \Rh$, respectively,
where $G_\C$ and $\Gh_\C$ are  the complexifications of $G = SO(2n + 1)$
and $\Gh = SO(2n - 1)$,  
and other symbols are defined as above.
Given
$w$
and $w'$
in $W$,
let $K_w \in H^{2 l(w)}_T(G_\C/B;\Z)$ be
the equivariant Schubert class associated to $w$, 
and let $\tau \in W$ be the shortest element such that $\pi(\tau) = \pi(w')$.
For all $\wh \in \Wh$,  let 
$\Kh_{\wh} \in H_{\widehat{T}}^{2 l(\wh)}(\Gh_\C/\Bh;\Z)$ be
the equivariant Schubert class associated to $\wh$, 
and let 
$R(w,\tau \wh) \subset \overline{\Sigma}(w, \tau \wh)$  denote
the set of relevant paths from $w$ to $\tau \wh$ in $(W,E)$. 
\begin{itemize}
\item[(1)]
For all 
$w$ and $w'$ in $W$
\begin{gather*}
K_w(w')=\sum _{\wh \in \Wh}
\bigg(\sum _{\gamma \in R(w,\tau \wh) }Q(\gamma)\bigg) 
\; \tau \left(\widehat{K}_{\wh}(\tau^{-1} w') \right),
\end{gather*}
where for every $\gamma \in R(w,\tau \wh)$
\begin{gather*}
Q(\gamma) = 
\begin{cases}
\begin{array}{ll}
P(\gamma)
 & \mbox{  if $\gamma$ is complete}\\
 \displaystyle P(\gamma) \frac{2\pi(w')}{\pi(w')+x_{k(\gamma)+1}}
 & \mbox{  if $\gamma$ is incomplete }.
\end{array}
\end{cases}
\end{gather*}
\item[(2)] 
$Q(\gamma)$ is the product of distinct positive roots
and a constant which is either $1$ or $2$  for all $\gamma \in 
R(w,\tau \wh).$
\end{itemize}
\end{theorem}

\begin{example}
Consider the case that $G = SO(5)$.
Let $w=s_2$ and $w'=s_1s_2s_1s_2$,
where  $s_1=s_{x_1-x_2}$ and $s_2=s_{x_2}$. 
We want to compute
$K_w(w')$ using Theorem \ref{mainBnprecise}.
Since $\pi^{-1}(\pi(w'))=\{\tau,w'\}\subset W$,
where  $\tau=s_1s_2s_1$,  $\tau$ is the shortest element in $\pi^{-1}(\pi(w'))$.
Since $\widehat{W} = \{\Id,s_2\}$ and $\tau s_2 = w'$,
we need to find the sets of relevant paths $R(w,\tau)$ and $R(w,w')$.
It is straightforward to check that the following hold:
\begin{itemize}
\item $\overline{\Sigma}(w,\tau)=\{\gamma^1,\gamma^2\}$, where $\gt^1=\pi(\gamma^1)=(-x_1,x_2,x_1)$ and $\gt^2=(-x_1,-x_2,x_1)$; 
so the paths $\gamma^1$ and $\gamma^2$ are incomplete, and $\gamma^1$ is relevant.
Hence $R(w,\tau)=\{\gamma^1\}$; moreover,  $Q(\gamma^1)=x_1$.
\item $\overline{\Sigma}(w,w')=\{\gamma^3\}$, 
where $\gt^3=\pi(\gamma^3)=(-x_1,-x_2,x_2,x_1)$; so the path $\gamma^3$ is complete
and hence relevant. So $R(w,w') = \{\gamma^3\}$ and $Q(\gamma^3)=1$.
\end{itemize}
Moreover $\widehat{K}_{\Id}\equiv 1$, $\widehat{K}_{s_2}(s_2)=x_2$ and $\tau(x_2)=x_2$.
Therefore, Theorem \ref{mainBnprecise} implies that $$
K_w(w')=Q(\gamma^1)\tau(\widehat{K}_{\Id}(s_2))+
Q(\gamma^3)\tau(\widehat{K}_{s_2}(s_2))=
x_1+x_2\;.
$$
\end{example}

To prove Theorem \ref{mainBnprecise},
we 
need to translate it into 
geometrical language.
Fix a point
$$\mu^j \in \td
\ \ 
 \mbox{such that } 
\mu^j_1<\cdots<\mu^j_j<0= \mu^j_{j+1} =  \cdots =\mu^j_n
\ \ 
 \mbox{for each } 0 \leq j \leq n;$$
for simplicity assume that $\mu_j^j=-1$. Let
$(\Oo_{\mu^j}, \omega_j,\psi_j)$ be the coadjoint orbit
through $\mu^j$ for each $j$.
The stabilizer of $\mu^j$ is
$$SO(2) \times \dots \times SO(2) \times SO(2n-2j + 1)
\quad \mbox{for all } j;$$
in particular, 
$P_{\mu^i} \subseteq P_{\mu^j}$ for all $1 \leq j \leq i \leq n$.
Moreover,
let $\varphi=\psi_n^{\xi} \colon \Oo_{\mu^n} \r \R$  be a generic
component of the moment map that achieves its minimum value at $\mu^n$.
Observe that, by the definition of $\mu^n$, the set $R^+$
coincides with $\{\alpha \in R \mid ( \alpha, \xi ) > 0\}$.
By Proposition~\ref{cocan}, there exists a canonical
class $\alpha_p \in H_T^{2 \lambda(p)}( \Oo_{\mu^n};\Z)$ 
for each $p\in \Oo_{\mu^n}^T$.
The
map from $W$  to
$\Oo_{\mu^n}^T$ given by $w \mapsto w(\mu^n)$ identifies the
canonical graph of $\Oo_{\mu^n}$ with $(W,E)$; 
we
shall identify these without further comment.
Moreover, by 
Proposition~\ref{theta one1}
and Lemma~\ref{llambda}, 
 $\Theta(r,r')=1$ for each edge $(r,r')$ in $E$.

Let $\pi \colon \Oo_{\mu^n} \to \Oo_{\mu^1}$ be the natural projection.
Note that  $\overline{\Sigma}(w,w')$
is exactly the set of  horizontal paths (with respect to $\pi$)
from $w$ to $w'$; 
see Definition~\ref{def:horizontal}.
Moreover, 
given any $\gamma \in \overline{\Sigma}(w,w')$,
the projection $\gt = \pi(\gamma)$ is an
ascending path in the GKM graph $(\Vt,\gkmet)$ associated to 
$(\Oo_{\mu^1}, \omega_1, \psi_1)$
by Lemmas~\ref{GKMlambda} and \ref{increasing path}; 
this proves Lemma~\ref{B order vertices}. \labell{proveBov}
Finally, note that  $(\Vt,\gkmet)$ is a complete graph,
where $\Vt = \{ \pm x_1, \pm x_2, \dots, \pm x_n \}$,
and that  $\Lt_{\pi(w')}^-$ is the equivariant
Euler class of the negative normal bundle of 
$\vt = \psi_1^\xi \colon \Oo_{\mu^1} \to \R$ at $\pi(w')$.

\begin{remark}\rm

By Proposition~\ref{coadjoint preserving},
the natural projection map $\rho_j \colon \Oo_{\mu^{j+1}} \to \Oo_{\mu^j}$
is a strong symplectic fibration 
with fiber 
the Grassmannian
$P_{\mu^j}/P_{\mu^{j+1}} \simeq Gr_2^+(\R^{2n-2j+1})$ for all 
$0 \leq j < n$.
Hence, since 
$H^*\big(Gr_2^+(\R^{2n-2j+1});\Z\big[\frac{1}{2}\big]\big) \simeq
H^*\big(\CP^{2n-2j-1};\Z\big[\frac{1}{2}\big]\big)$ and $\Theta(r,r')=1$ for each edge $(r,r')$ in $E$,
Theorem~\ref{tower symp} (together with Proposition \ref{canonical=Schubert})
immediately implies 
that, for any 
$w,w'\in W$
we can express the restriction
$K_w(w')$
as a sum of terms $\Xi(\gamma)$ over paths $\gamma
\in C(w,w')$
where each term 
is a polynomial in the simple roots with positive {\em rational}
coefficients; more precisely,
$\Xi(\gamma)$
is the product of distinct positive roots and a constant that is
a (possibly negative) power of 
$2$; cf. \cite{Za}.
\end{remark}

To prove the theorem, we need to analyze how the 
expression of $P(\gamma)$ changes depending on
whether $\gamma$ is complete or incomplete.

\begin{proposition}\labell{preciseP}
Let $\gamma=(\gamma_1,\ldots,\gamma_{|\gamma|+1})$ be a path in $\overline{\Sigma}(p,s)$.
Let $\gt = \pi(\gamma)$ and let $SV(\gt)$ be the skipped vertices of $\gt$; see Definition~\ref{def:sv}.
Then
$$
P(\gamma)= c
\prod_{r\in SV(\gt)}\eta(r,\pi(s)), \quad \mbox{where } c  = 
\begin{cases}
1 \mbox{ or } 2
& \mbox{if } \gamma \mbox{ is complete, and } \\
\frac{1}{2}
& \mbox{if } \gamma \mbox{ is incomplete.} \\
\end{cases}
$$
More precisely, $c = 2$ exactly if neither condition in Definition~\ref{B complete} is satisfied.
\end{proposition}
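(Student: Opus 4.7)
The plan is to reduce $P(\gamma)$ to the formula given by Lemma~\ref{explicit P} and then compute the resulting scalar factor explicitly in the GKM graph of $\mathcal{O}_{\mu^1}$. Since $\mathcal{O}_{\mu^1}$ is a GKM space, by Propositions~\ref{cocan} and~\ref{theta one1}, $\Theta(\gamma_i,\gamma_{i+1}) = 1$ for each edge of $\gamma$. Moreover, $H^*(\Oo_{\mu^1};\Z[\tfrac12]) \simeq H^*(\CP^{2n-1};\Z[\tfrac12])$ (a standard fact for the oriented real Grassmannian $Gr_2^+(\R^{2n+1})$), so Lemma~\ref{explicit P} applies and gives
$$P(\gamma) \;=\; \Bigg(\prod_{i=1}^{|\gamma|}\frac{m(\gt_i,\gt_{i+1})}{m(\gt_i,\pi(s))}\Bigg)\prod_{r\in SV(\gt)}\eta(r,\pi(s)),$$
where $\gt = \pi(\gamma)$. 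Thus it suffices to show that the displayed rational scalar equals $1$, $2$, or $\tfrac12$ according to the case.

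Next, I would compute all the magnitudes in $(\Vt,\gkmet)$ directly from Definition~\ref{magnitude}. Using the explicit description of the GKM structure on $\Oo_{\mu^1}$ (with fixed points $\pm x_1,\dots,\pm x_n$ and the complete graph on them guaranteed by Lemma~\ref{fake}), the reflection and weight computations from Section~\ref{sec:co} yield $\eta(-x_i,-x_j) = x_i - x_j$, $\eta(-x_i, x_j) = x_i + x_j$, $\eta(x_j,x_i) = x_i - x_j$, and $\eta(-x_i,x_i) = x_i$ (the short root). Comparing with $\psi(\text{target}) - \psi(\text{source})$ shows that every magnitude equals $1$ except for the ``flip'' edges, for which $m(-x_j,x_j) = 2$.

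The crucial geometric observation is then that any ascending path $\gt$ in $(\Vt,\gkmet)$ contains \emph{at most one} flip edge. Indeed, the ordering on $\Vt$ is $-x_1 < \dots < -x_n < x_n < \dots < x_1$, so once an edge of the form $(-x_j,x_j)$ occurs, every subsequent vertex lies in $\{x_m : m \leq j\}$ and no further $-x_{j'}$ can appear. Therefore $\prod_{i} m(\gt_i,\gt_{i+1}) = 2^k$ with $k\in\{0,1\}$, and $\prod_{i\leq|\gamma|} m(\gt_i,\pi(s)) = 2^{\varepsilon}$ where $\varepsilon = 1$ precisely when $\pi(s) = x_l$ and $-x_l \in V(\gt)\setminus\{\pi(s)\}$, and $\varepsilon = 0$ otherwise.

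Finally, I would do the short case analysis matching $(k,\varepsilon)$ with the completeness hypothesis. If $\pi(s) = -x_l$, then $x_l \notin V(\gt)$ and no flip edge can occur, so $k=\varepsilon=0$ and $\gamma$ is complete. If $\pi(s)=x_l$ and $-x_l\notin V(\gt)$, then $\varepsilon = 0$ and $k\in\{0,1\}$ is allowed (the flip edge, if present, must be $(-x_j,x_j)$ with $j > l$), giving scalar $1$ or $2$; again $\gamma$ is complete. If $\pi(s)=x_l$ and $-x_l \in V(\gt)$, an internal flip edge $(-x_j,x_j)$ with $j\neq l$ is impossible by the ordering argument above, so either there is no flip edge (giving $k=0$, $\varepsilon = 1$, scalar $\tfrac12$, and $\gamma$ incomplete) or the unique flip edge is $(-x_l,x_l)$ itself (giving $k=\varepsilon = 1$, scalar $1$, and $\gamma$ complete since condition (ii) fails). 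I do not anticipate any real obstacle; the only thing that requires a bit of care is the at-most-one-flip-edge lemma, which is the heart of the case analysis.
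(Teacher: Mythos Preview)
Your approach is exactly the paper's: invoke Lemma~\ref{explicit P} with $\Theta\equiv 1$ (Proposition~\ref{theta one1}), compute that all magnitudes in $(\Vt,\gkmet)$ equal $1$ except $m(-x_j,x_j)=2$, note that an ascending path contains at most one flip edge, and read off $c$ from the definition of complete/incomplete. One small correction to your case analysis: when $\pi(s)=x_l$ and $-x_l\in V(\gt)$, your claim that a flip edge $(-x_j,x_j)$ with $j\neq l$ is ``impossible by the ordering argument'' is not actually established---nothing prevents $-x_l$ from appearing \emph{before} such a flip edge in the ascending order $-x_l<-x_j<x_j<x_l$; however this is harmless, since any flip edge (with $j=l$ or $j>l$) makes condition (ii) fail, giving $k=\varepsilon=1$, $c=1$, and $\gamma$ complete, which is all you need.
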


\begin{proof}
The edge $(-x_j,x_j) \in \gkmet$ has magnitude $2$ for all $j$;
all the other edges $(r,r') \in \gkmet$   have magnitude $1$; 
see Definition \ref{magnitude}.
Moreover, by Lemma~\ref{B order vertices}, $\gt$
can have at most one edge of type 
$(-x_j,x_j)$.
Therefore, 
since $\Theta(r,r') = 1 $ for all $(r,r') \in E$, 
the claim follows from Lemma~\ref{explicit P}.
\end{proof}

We also need  the following two lemmas.

\begin{lemma}\labell{uneven1}
Let $\gamma$ be a path in $ \overline{\Sigma}(p,s)$. 
If 
$\{-x_l,x_l\}\subset V(\pi(\gamma))$ 
for some  $l<n$, 
then $\{-x_{l+1},x_{l+1}\}\cap V(\pi(\gamma)) 
\neq \emptyset$.
\end{lemma}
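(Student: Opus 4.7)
The plan is to argue by contradiction: assume both $-x_l$ and $x_l$ are in $V(\pi(\gamma))$ but neither $-x_{l+1}$ nor $x_{l+1}$ is. Since $\pi(\gamma)$ is ascending in $(\Vt,\gkmet)$ (by Lemmas~\ref{GKMlambda} and~\ref{increasing path}) and $-x_l$ precedes $x_l$ in the ordering $-x_1,\ldots,-x_n,x_n,\ldots,x_1$, I focus on the subpath $\gamma_i,\ldots,\gamma_j$ with $\pi(\gamma_i)=-x_l$ and $\pi(\gamma_j)=x_l$. Writing $\gamma_m=\tau_m(\mu^n)$ for signed permutations $\tau_m=(a_1^{(m)},\ldots,a_n^{(m)})$ in one-line notation, and using that $\pi(\tau_m(\mu^n))=-\mathrm{sign}(a_1^{(m)})\cdot x_{|a_1^{(m)}|}$ depends only on the first entry, horizontality combined with the ascending property and our avoidance hypothesis forces $a_1^{(i)}=l$, $a_1^{(j)}=-l$, and $|a_1^{(m)}|\geq l+2$ for every intermediate $m$.

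Each edge $\tau_m\to\tau_{m+1}=\tau_m s_{\beta_m}$ of the canonical graph satisfies $l(\tau_{m+1})=l(\tau_m)+1$, and horizontality (the requirement that the first entry changes) restricts $\beta_m$ to $x_1-x_k$, $x_1+x_k$ for some $k>1$, or $x_1$. A direct count of positive roots sent to negative shows that $s_{x_1}$ is a length-one ascending cover only when $a_1^{(m)}=n$; at every other step the move is a swap $s_{x_1-x_k}$ (producing new first entry $a_k^{(m)}$) or a swap-with-negation $s_{x_1+x_k}$ (new first entry $-a_k^{(m)}$).

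The key combinatorial observation is that the $(l+1)$-entry of $\tau_m$---the unique entry with $|a|=l+1$---sits at some position $p_0\geq 2$ in $\tau_i$ and stays pinned there with a fixed sign $\epsilon\in\{\pm 1\}$ throughout the subpath, because any swap between positions $1$ and $p_0$ would place $\pm(l+1)$ into position~$1$, contradicting the avoidance hypothesis. The analogous analysis for the $l$-entry, which starts at position $1$ with sign $+$ in $\tau_i$ and must return to position $1$ with sign $-$ in $\tau_j$, shows that the first and last moves of the subpath both swap position $1$ with one common position $k_1$, and that these two moves must be of opposite type (one swap, one swap-with-negation) so that the sign of the $l$-entry flips exactly once overall.

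The hard part will be to extract a contradiction from the Bruhat cover conditions at the first and last moves. For each swap-type reflection $s_{x_1\pm x_{k_1}}$, the condition $l(\tau\cdot s)=l(\tau)+1$ translates into the combinatorial statement that no entry at a position $1<j<k_1$ lies strictly between the two swapped entries in the descending order $1,2,\ldots,n,-n,\ldots,-1$ on $\{\pm 1,\ldots,\pm n\}$. Applying this criterion at both endpoints, the pinned value $\epsilon(l+1)$ at position $p_0$ forces $\epsilon$ to take opposite values at the two ends of the subpath whenever $p_0<k_1$, giving the desired contradiction; the residual case $p_0>k_1$ is handled by a symmetric argument exchanging the roles of $\tau_i$ and $\tau_j$, while the possibility that some intermediate step has $a_1^{(m)}=n$ (permitting an insertion of the negation $s_{x_1}$) is dispatched by noting that such insertions do not disturb the pinning of the $(l+1)$-entry at $p_0$.
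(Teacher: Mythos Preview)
Your strategy is quite different from the paper's, and the ``hard part'' you flag is a genuine gap rather than a routine verification.

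\medskip

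\textbf{What the paper does.} The paper never analyzes the Bruhat cover conditions directly. Instead, it exploits the reflection identities
\[
s_{x_l}=s_l\,s_{x_{l+1}}\,s_l,\qquad
s_{x_l\pm x_j}=s_l\,s_{x_{l+1}\pm x_j}\,s_l\quad(j\ne l,l+1),
\]
where $s_l=s_{x_l-x_{l+1}}$. Assuming $\{-x_{l+1},x_{l+1}\}\cap V(\gt)=\emptyset$, one enlarges $V(\gt)$ by inserting both $-x_{l+1}$ and $x_{l+1}$, obtaining an ascending path $\gt'$ of length $|\gt|+2$. The identities above show that the product of reflections along $\gt'$ equals the product along $\gt$, so by Lemma~\ref{liftpath} the lifts $\gamma$ and $\gamma'$ start at $p$ and end at the \emph{same} point $s$. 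But $\gamma\in\overline{\Sigma}(p,s)$ forces $\lambda(s)-\lambda(p)=|\gt|$, while $\gamma'$ has $|\gt|+2$ steps, each increasing $\lambda$ by at least one; contradiction. No cover criteria are ever invoked.

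\medskip

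\textbf{Where your argument is incomplete.} Your setup through the pinning of the $(l+1)$-entry and the $l$-entry at positions $p_0$ and $k_1$ is correct and carefully reasoned. Your claim that $\tau\lessdot\tau s_{x_1}$ only when $a_1=n$ is also right. But the punchline is not there. You state a ``strictly between'' criterion for covers by $s_{x_1\pm x_{k_1}}$ in the order $1,2,\ldots,n,-n,\ldots,-1$; in type~$B$ this is correct for $s_{x_1-x_{k_1}}$ but \emph{not} for $s_{x_1+x_{k_1}}$, where the cover condition involves positions outside the window $[1,k_1]$ (one must look at the full signed-permutation picture on $\{\pm1,\ldots,\pm n\}$). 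Even granting a correct criterion, you then assert that the pinned value $\epsilon(l+1)$ at position $p_0$ forces contradictory signs of $\epsilon$ at the two endpoints; this is the heart of the matter and you have not shown it. Note that the first-entry values at the two endpoints are $l$ and (before the last move) some $c$ with $|c|\ge l+2$, so the ``between'' intervals at the two ends are not symmetric, and the putative contradiction depends on case analysis you have not carried out. The appeal to a ``symmetric argument'' for $p_0>k_1$ is likewise unsupported, since in that case the between-ness criterion says nothing about position $p_0$ at all.

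\medskip

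In short, your route may be completable but it is substantially harder than the paper's, and the step you yourself identify as hard remains unproved. The paper's two-line algebraic trick avoids all of this.
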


\begin{lemma}\labell{uneven2}
Let $\gamma$ be a path in $\overline{\Sigma}(p,s)$
such that 
$\{-x_l,x_l \} \subset V(\pi(\gamma))$ 
for some $l$. 
If $\{-x_{l+1},x_{l+1}\} \cap V(\pi(\gamma))
= \{x_{l+1}\}$ for some $l$, 
then there exists a unique path 
$\gamma' \in \overline{\Sigma}(p,s)$
such that  $V(\pi(\gamma'))$ 
is obtained from $V(\pi(\gamma))$ 
by replacing the vertex $x_{l+1}$ with $-x_{l+1}$. 
That is, $-x_{l+1} \in  V(\pi(\gamma'))$
and $V(\pi(\gamma'))
\smallsetminus \{-x_{l+1}\} = V(\pi(\gamma)) 
\smallsetminus \{x_{l+1}\}$.
A similar claim holds if 
$\{-x_{l+1},x_{l+1}\} \cap V(\pi(\gamma))
= \{-x_{l+1}\}.$
\end{lemma}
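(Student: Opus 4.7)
My plan is to construct $\gamma'$ as the unique lift (via Lemma~\ref{liftpath}) of an appropriate ascending path $\gt'$ in the complete graph $(\Vt,\gkmet)$. Setting $\gt = \pi(\gamma)$, I would take $\gt'$ to be the ascending path with vertex set $V(\gt') = (V(\gt)\smallsetminus\{x_{l+1}\})\cup\{-x_{l+1}\}$. In the ascending order one has $\pi(p)\leq -x_l<-x_{l+1}$ and $\pi(s)\geq x_l>x_{l+1}$, so the minimum and maximum of $V(\gt)$ are unaffected by the swap; hence $\gt'$ still runs from $\pi(p)$ to $\pi(s)$, and Lemma~\ref{liftpath} yields a unique lift $\gamma'$ in $(V,\gkme)$ starting at $p$ with $\lambda$ strictly increasing along its edges.

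To verify $\gamma'\in\overline{\Sigma}(p,s)$ I would first note that horizontality follows at once from $\pi(\gamma')=\gt'$ having distinct consecutive vertices. Each edge of $\gamma'$ increases $\lambda$ by at least one; once we know $\gamma'$ terminates at $s$, then $|\gamma'|=|\gt|=\lambda(s)-\lambda(p)$ forces each increase to be exactly one, placing $\gamma'$ in the canonical graph. So the whole argument reduces to showing that $\gamma'$ ends at $s$. By Lemma~\ref{liftpath}, the endpoint is $w_{\gt'}(p)$, where $w_{\gt'}$ is the product of reflections along $\gt'$; similarly $s=w_\gt(p)$. Since $\mu^n$ is regular, $p$ has trivial $W$-stabilizer, so it suffices to prove the Weyl-group identity $w_\gt=w_{\gt'}$.

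This identity is the main obstacle, and I would tackle it as follows. Let $j,k$ be the indices with $\gt_j=-x_l$ and $\gt_k=x_{l+1}$; then $\gt_{k+1}=x_l$ is forced, because $x_{l+1}$ and $x_l$ are adjacent in the total ordering and both lie in $V(\gt)$. The words $w_\gt$ and $w_{\gt'}$ agree on the common initial segment through $\gt_j$ and common terminal segment from $\gt_{k+1}$, so the identity reduces to comparing the middle subwords. Three features make this comparison go through: (i) the first reflection $-x_l\to -x_{l+1}$ of the $\gt'$-middle and the last reflection $x_{l+1}\to x_l$ of the $\gt$-middle both equal $s_{x_l-x_{l+1}}$; (ii) the remaining outer middle reflections of $\gt$ and $\gt'$ are conjugate by $s_{x_l-x_{l+1}}$, since their defining roots are related by the swap $x_l\leftrightarrow x_{l+1}$; (iii) every intermediate vertex $\gt_{j+1},\ldots,\gt_{k-1}$ is of the form $\pm x_m$ with $m\geq l+2$ (as $\pm x_l,\pm x_{l+1}$ either occupy fixed positions in $\gt$ or are absent from $V(\gt)$), so every intermediate reflection has root orthogonal to $x_l-x_{l+1}$ and commutes with it. Reshuffling the middle of $w_{\gt'}$ using (i)--(iii), and using $\langle x_l-x_{l+1},x_l+x_{l+1}\rangle=0$ to handle the degenerate case $k=j+1$, produces the middle of $w_\gt$.

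Uniqueness of $\gamma'$ is then immediate: any $\gamma''\in\overline{\Sigma}(p,s)$ with $V(\pi(\gamma''))=V(\gt')$ must project to the unique ascending path on that vertex set in the complete graph $(\Vt,\gkmet)$, so $\pi(\gamma'')=\gt'$, forcing $\gamma''=\gamma'$ by the uniqueness clause of Lemma~\ref{liftpath}. The ``similar claim'' in the second half of the statement is proved by the same argument with the roles of $x_{l+1}$ and $-x_{l+1}$ interchanged.
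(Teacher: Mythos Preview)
Your proposal is correct and follows essentially the same approach as the paper's proof: construct the ascending path $\gt'$ by swapping $x_{l+1}$ for $-x_{l+1}$, lift it via Lemma~\ref{liftpath}, reduce to the Weyl-group identity $w_{\gt}=w_{\gt'}$, and verify the latter using the relations in \eqref{relation reflections} together with commutativity of $s_l$ with reflections in roots $\pm x_a\pm x_b$ for $a,b\geq l+2$; the length-counting argument forcing $\gamma'\in\overline{\Sigma}(p,s)$ is also the same. Your presentation is slightly more conceptual (the paper splits into the two cases $k=j+1$ and $k>j+1$ explicitly), and you add an explicit uniqueness argument, which the paper leaves implicit in the uniqueness clause of Lemma~\ref{liftpath}.
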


To simplify the proof of these lemmas, 
let $s_l = s_{x_l - x_{l+1}}$ denote the reflection
across the root $x_l - x_{l+1}$ 
for all  $ l\in\{1,\ldots,n-1\}$, 

We recall the following 
relations; for
all $ l\in\{1,\ldots,n-1\}$ 
and $ j\in \{1,2,\ldots,n\}$ with $j\notin\{l,l+1\}$
\begin{equation}\labell{relation reflections}
\begin{array}{c}
s_{x_l}=s_ls_{x_{l+1}}s_l  \\
s_{x_l\pm x_j}=s_ls_{x_{l+1}\pm x_j}s_l  \\
s_{x_l+x_{l+1}}=s_ls_{x_l+x_{l+1}}s_l \\
\end{array}
\end{equation}

\begin{proof}[Proof of Lemma \ref{uneven1}]
Let $\gt = \pi(\gamma)$.
Suppose that, on the contrary,  $\{-x_l,x_l\} \subset V(\gt)$ 
but
$\{-x_{l+1},x_{l+1}\}\cap V(\gt)=\emptyset$. 
Let $\gt'$ be the ascending path in $(\Vt,\gkmet)$
such that $V(\gt') = \{ -x_{l+1}, x_{l+1}\} \cup V(\gt)$.
There exists $\beta_i\in R$ 
such that 
$\gt_{i+1}=s_{\beta_i}(\gt_i)$ 
for all $i=1,\ldots,|\gt|$, and 
there exists $\delta_i \in R$ such that
$\gt'_{i+1}=s_{\delta_i}(\gt'_i)$ 
for all $i=1,\ldots,|\gt'|$.
Define $w$ and $w'$  in the Weyl group $W$ of $G$ by
$$w = s_{\beta_{|\gt|}}s_{\beta_{|\gt|-1}}\cdots s_{\beta_1} 
\quad \mbox{and} \quad
w' = s_{\delta_{|\gt'|}}s_{\delta_{|\gt'|-1}}\cdots s_{\delta_1}.$$
\begin{itemize}
\item 
If $\gt=(\ldots, -x_l,x_l,\ldots)$,
then  $w=w_1s_{x_l}w_2$ and $w'=w_1s_ls_{x_{l+1}}s_lw_2$ for some $w_1$
and $w_2\in W$. Hence
$w=w'$
by \eqref{relation reflections}.
\item If $(-x_l,x_l)$ is not an edge of $\gt$,
then
there exists $i$ and $h > l$ so that
$w=w_1s_{x_l\pm x_h}w_0s_{x_l\pm x_i}w_2$ and 
$w'=w_1s_ls_{x_{l+1}\pm x_h}w_0s_{x_{l+1}\pm x_i}s_lw_2$  
for some $w_0,w_1,$ and $w_2 \in W$ such that $w_0$ commutes with $s_l$.
Hence by \eqref{relation reflections}
we 
again
have $w'=w_1s_ls_{x_{l+1}\pm x_h}s_lw_0s_ls_{x_{l+1}\pm x_i}s_lw_2=
w_1s_{x_l\pm x_h}w_0s_{x_l\pm x_i}w_2=w$. 
\end{itemize}

Moreover, by Lemma~\ref{liftpath} there exists
a path $\gamma'$ of length $|\gt'|$ in $(V,\gkme)$ that starts at $p$ 
such that $V(\pi(\gamma')) = V(\gt')$ 
and 
$\lambda(\gamma'_{i+1})> \lambda(\gamma'_i)$ for all 
$1 \leq i \leq |\gt'|$.
Moreover, since $w = w'$,  the endpoints $s = w(p)$ of $\gamma$ and
$s' = w'(p)$ of $\gamma'$ are equal.
On the other hand,
the fact that
$\gamma\in\overline{\Sigma}(p,s)
\subset \Sigma(p,s)
$ implies that $\lambda(s)-\lambda(p)=
|\gt|$. 
Moreover, 
$\lambda(\gamma'_{i+1}) > \lambda(\gamma'_i)$ for all 
$1 \leq i \leq |\gt'|$.
Hence, $\lambda(s')-\lambda(p)\geq |\gt'| = |\gt| + 2$.  
Since $s = s'$, this is impossible.
\end{proof}

\begin{proof}[Proof of Lemma \ref{uneven2}]
Let $\gt = \pi(\gamma)$.
Assume that $\{-x_{l+1},x_{l+1} \} \cap V(\gt) = \{x_{l+1}\}$.
Let $\gt'$ be the ascending path in $(\Vt,\gkmet)$
such that $V(\gt')$ is obtained from $V(\gt)$ by replacing the
vertex $x_{l+1}$ with $-x_{l+1}$.
As before,
there exists $\beta_i\in R$ 
such that 
$\gamma_{i+1}=s_{\beta_i}(\gamma_i)$ 
for all $i=1,\ldots,|\gamma|$, and 
there exists $\delta_i \in R$ such that
$\gamma'_{i+1}=s_{\delta_i}(\gamma'_i)$ 
for all $i=1,\ldots,|\gamma'|$.
By Lemma 
 \ref{liftpath},
this implies that
$\gt_{i+1}=s_{\beta_i}(\gt_i)$ 
and $\gt'_{i+1}=s_{\delta_i}(\gt'_i)$ 
for all $i$.
Define $w$ and $w' \in W$ by
$$w = s_{\beta_{|\gt|}}s_{\beta_{|\gt|-1}}\cdots s_{\beta_1} 
\quad \mbox{and} \quad
w' = s_{\delta_{|\gt'|}}s_{\delta_{|\gt'|-1}}\cdots s_{\delta_1}.$$
\begin{itemize}
\item If $\gt=(\ldots,-x_l,x_{l+1},x_l,\ldots)$ 
then $w=w_1s_ls_{x_l+x_{l+1}}w_2$ and $w'=w_1s_{x_l+x_{l+1}}s_lw_2$ for some 
$w_1,w_2\in W$. 
Hence
$w=w'$
by \eqref{relation reflections}. 
\item 
If $(-x_1,x_{l+1})$ is not an edge of $\gt$,
then there exists $h$ and $i>l+1$  so that
$w=w_1s_ls_{x_{l+1}\pm x_h}w_0s_{x_l\pm x_i}w_2$ and 
$w'=w_1s_{x_l\pm x_h}w_0s_{x_{l+1}\pm x_i}s_lw_2$ 
for some $w_0,w_1,$ and $w_2 \in W$ such that $w_0$ commutes with $s_l$.
Hence 
again
by \eqref{relation reflections} we have
$w=w_1s_ls_{x_{l+1}\pm x_h}s_lw_0s_ls_{x_l\pm x_i}w_2=
w_1s_{x_l\pm x_h}w_0s_{x_{l+1}\pm x_i}s_lw_2=w'$.
\end{itemize}

One the other  hand,  
the fact that
$\gamma\in\overline{\Sigma}(p,s)$ implies that $\lambda(s)-\lambda(p)=
|\gamma|$. 
Moreover, since $\gt'$ is an  ascending path,  
Lemma \ref{increasing path} 
and Lemma~\ref{flag index increasing} together imply that
$\lambda(\gamma'_{i+1})-\lambda(\gamma'_i)\geq 1$ for all 
$1 \leq i \leq |\gamma'| = |\gamma|$.
But this is impossible unless 
 $\lambda(\gamma'_{i+1})-\lambda(\gamma'_{i})=1$ for all $i$, 
which implies that $\gamma'\in \overline{\Sigma}(p,s)$.
\end{proof}
We are now ready to prove Theorem \ref{mainBnprecise}

\begin{proof}[Proof of Theorem \ref{mainBnprecise}]
Let $p = w(\mu^n)$ and $q = w'(\mu^n)$.
For all $s \in 
\widehat{\Oo}_q^T$,
 let $\ah_s \in H^*_T(\widehat{\Oo}_q;\Z)$ be the canonical class on the fiber 
$\widehat{\Oo}_q = \pi^{-1}(\pi(q))\subset \Oo_{\mu^n}$.
Since
Proposition \ref{coadjoint preserving} implies
that $\pi$ is a strong symplectic fibration
and
$\psi_1$ is the inclusion,
Corollary \ref{corollary formula}, Proposition~\ref{cocan} and Lemma \ref{coGKM} together
imply that
$$
\alpha_p(q)=\sum_{s\in \widehat{\Oo}_q^T}
\Big(\sum_{\gamma\in \overline{\Sigma}(p,s)}P(\gamma)\Big)
\ah_s(q).
$$ 
If $\gamma\in\overline{\Sigma}(p,s)$ is complete, 
then $\gamma$ is  relevant and $P(\gamma)=Q(\gamma)$. 
On the other hand, by
Lemmas~\ref{uneven1} and \ref{uneven2},
the set of incomplete paths can be decomposed into
pairs of paths $\gamma$ and $\gamma'$, 
so that $V(\gt') = V(\pi(\gamma'))$ is obtained from 
$V(\gt)=V(\pi(\gamma))$ by replacing $x_{k(\gamma)+1}$ 
by $-x_{k(\gamma)+1}$, where $k(\gamma) = \max\{j \mid \{-x_j, x_j\}
\subset V(\gt)\}$.
In particular,
$SV(\gt)\smallsetminus\{-x_{k(\gamma)+1}\}=
SV(\gt')\smallsetminus\{x_{k(\gamma)+1}\}$. 
Additionally, by the definition of $k(\gamma)$,
$\pi(s) \neq \pm x_{k(\gamma)+1}$, and so
$\eta( \pm x_{k(\gamma)+1},\pi(s))
= 
\pi(s) \mp x_{k(\gamma)+1}.$
Hence by Proposition \ref{preciseP} 
\begin{equation*}
\begin{split}
Q(\gamma) & = 
\pi(s)  \prod_{r \in SV(\gt) \cap SV(\gt')} \eta(r,\pi(s)) \\
&= 
\frac{
 \eta(-x_{k(\gamma)+1},\pi(s))+
\eta(x_{k(\gamma)+1},\pi(s) 
}{2} 
\left( \prod_{r \in SV(\gt) \cap SV(\gt')} \eta(r,\pi(s)) \right)
\\
&= P(\gamma)+P(\gamma').
\end{split}
\end{equation*}
Since
$\gamma$ is relevant, but $\gamma'$ is not,
this  implies that
$$\displaystyle\sum_{\gamma\in \overline{\Sigma}(p,s)}P(\gamma)=\sum_{\gamma \in R(p,s)}Q(\gamma).$$
By Propositions~\ref{canonical=Schubert} and \ref{twist} , this proves part $(1)$ of Theorem \ref{mainBnprecise}.
Finally, 
by the definition of $SV(\gt)$, $\eta(r,\pi(s))$ is a positive root for all $r \in SV(\gt)$. Hence
if $\gamma$ is complete then $Q(\gamma) = P(\gamma)$ is the product of distinct positive roots
and a constant which is either $1$ or $2$. 
On the other hand, if $\gamma$ is incomplete then,
by definition of incomplete path, 
$\pi(s)$ must be a positive root.   
and so again  $Q(\gamma)$ is  the product of distinct positive roots.
\end{proof}


\subsection{Generic coadjoint orbit of type $D_n$}

The main result of this section is an inductive 
\textit{positive integral formula} that expresses the restrictions
of the equivariant Schubert classes on a generic coadjoint orbit of type $D_n$
in terms  of
products of distinct positive roots
with positive integer coefficients, and 
the restriction of equivariant Schubert classes on
a generic coadjoint orbit of  type $D_{n-1}$.
To find this formula, we will apply
Corollary~\ref{corollary formula} to the natural projection
from a generic coadjoint orbit of $SO(2n)$ to 
$Gr_2^+(\R^{2n})$, the Grassmannian of oriented
$2$-planes in $\R^{2n}$ 
for all $n \geq 4$. 
(If $n = 3$ then a generic coadjoint orbit of type $D_n$ 
is also the complete flag on $\C^4$,
and so we can use the techniques of \S\ref{An}.)
Despite the fact that $H^{2n-2}(Gr_2^+(\R^{2n});\R) = \R^2 \neq
\R = H^{2n - 2}(\CP^{2n -2};\R)$, we will then
proceed as in section \ref{section Bn}.

Let $G=SO(2n)$, $T \subset G$ be a maximal torus, and $W$ be the associated
Weyl group; assume $n>1$.
We can identify the dual of the Lie algebra of $T$ with $\td
= (\R^n)^{*}$ so that the set of roots is
 $$R=\{\pm x_i \pm x_j  \mid 1 \leq i < j \leq n\}\subset \td.$$
 Let $\widehat{G}=SO(2n-2)$. We can identify the dual of the Lie algebra of a maximal
 torus $\widehat{T}$ of $\widehat{G}$ with the set of $(a_1,\ldots,a_n)\in \td$ such that $a_1=0$.
 This identifies the roots of $\widehat{G}$ with the set
 $$
 \widehat{R}=\{\pm x_i \pm x_j\mid 2\leq i <j\leq n\}\subset R,
 $$
and the Weyl group $\widehat{W}$ of $\widehat{G}$ with the subgroup of $W$
 generated by reflections across the roots in $\widehat{R}$;
 moreover, it
 induces a map from $H^*(B\widehat{T};\Z)$ to $H^*(BT;\Z)$.
 Equivalently, 
 let $\Vt=\{\pm x_1,\pm x_2,\ldots,\pm x_n\}$;
 $\widehat{W}$ is the kernel of the map $\pi\colon W\to \Vt$
 defined by $\pi(w)=w(-x_1)$.
 
To state our main theorem, we will need several additional definitions.
Let 
$$R^+ = \{x_i \pm x_j \mid 1 \leq i < j \leq n \} \subset R$$ be the set of positive roots.
Define
\begin{equation}\label{DE}
E = \{( \tau, \tau s_\beta) \in W \times W \mid l(\tau s_\beta) = l (\tau) + 1 
\mbox{ and } \beta \in R\}.
\end{equation}
Given $w$ and $w' \in W$, 
let $\overline{\Sigma}(w,w')$ denote
the set of paths $\gamma = (\gamma_1,\dots, \gamma_{|\gamma|+1})$
from $w$ to $w'$ in $(W,E)$ such that $\pi(\gamma_i) \neq \pi(\gamma_{i+1})$
for all $i$.  Equivalently,  $\overline{\Sigma}(w,w')$
is the set of paths from $w$ to $w'$ such that each edge is of the form
$( \tau, \tau s_\beta)$, where $ l(\tau s_\beta) = l (\tau) + 1 $
and  $\beta \in R \smallsetminus \Rh$.
Given a sequence 
$\gt\in \Vt^k$, let
$V(\gt)$ be the set of ``vertices" of $\gt$:
$$
V(\gt)=\{\gt_1,\ldots,\gt_k\}\subset \Vt.
$$
We need the following lemma,
which we prove on page~\pageref{proveDov}.
\begin{lemma}\labell{D order vertices}
Given any path $\gamma=(\gamma_1,\gamma_2,\ldots,\gamma_{|\gamma|+1})\in \overline{\Sigma}(w,w')$,
the sequence $\gt=\pi(\gamma)=(\pi(\gamma_1),\pi(\gamma_2),\ldots,\pi(\gamma_{|\gamma|+1}))$
is a subsequence of $(-x_1,-x_2,\ldots,-x_n,x_n,\ldots,x_2,x_1)$.
\end{lemma} 

\begin{definition}
A path $\gamma \in \overline{\Sigma}(w,w')$ with $\pi(\gamma)=\gt$
is 
\textbf{incomplete} if  $\{-\pi(w'),\pi(w')\}\subset V(\gt)$.
Otherwise $\gamma$ is  \textbf{complete}. 
\end{definition}

\begin{definition}
A path $\gamma\in \overline{\Sigma}(w,w')$ 
with $\pi(\gamma)=\gt$
is 
\textbf{relevant} if either it is complete or 
if it  is incomplete and 
$x_{k(\gamma)+1}\in V(\gt)$, where\footnote{
Observe that if $\gamma$ is incomplete then by definition
$\{j\mid \{-x_j,x_j\}\subset V(\gt)\} \neq \emptyset$. Moreover, 
since $\pm x_i$ is not a root $\gamma$ cannot contain any edge whose 
projection is $(-x_n,x_n)$, and  so -- by Lemma~\ref{D order vertices} --
 $k(\gamma)<n$.
}
 $k(\gamma) = 
\max\{j\mid \{-x_j,x_j\}\subset V(\gt)\}$. 
\end{definition}
Finally, given 
a path $\gamma\in \overline{\Sigma}(w,w')$, define 
\begin{gather*}
P(\gamma)=
\Lt^-_{\pi(w')}
\left(
 \prod_{i=1}^{|\gamma|}\frac
{1}
{\pi(w')-\pi(\gamma_i)}
\frac
{\pi(\gamma_{i+1})-\pi(\gamma_i)}
{\eta(\gamma_i,\gamma_{i+1})}
\right), \\ 
\end{gather*}
where 
$\Lt^-_{\pi(w')} $ is the product of the $\alpha \in R^+ $ such that
$ \langle \pi(w'), \alpha \rangle > 0$ and \\ $\pi(s_{\alpha}(w'))\neq \pi(w').$

The main theorem of this section can be stated as follows.

\begin{theorem}\labell{mainDnprecise} 
Fix $n > 3$.
Let $B \subset G_\C$ and $\Bh \subset \Gh_\C$ be the Borel subgroups 
associated to  $R^+$ and $R^+ \cap \Rh$, respectively,
where $G_\C$ and $\Gh_\C$ are  the complexifications of $G = SO(2n)$
and $\Gh = SO(2n - 2)$, 
and other symbols are defined as above.
Given $w$ and $w'$ in $W$,
let $K_w \in H^{2 l(w)}_T(G_\C/B;\Z)$ be
the equivariant Schubert class associated to $w$, 
and let $\tau \in W$ be the shortest element such that $\pi(\tau) = \pi(w')$.
For all $\wh \in \Wh$,  let 
$\Kh_{\wh} \in H_{\widehat{T}}^{2 l(\wh)}(\Gh_\C/\Bh;\Z)$ be
the equivariant Schubert class associated to $\wh$, 
and let 
$R(w,\tau \wh) \subset \overline{\Sigma}(w, \tau \wh)$  denote
the set of relevant paths from $w$ to $\tau \wh$ in $(W,E)$. 

\begin{itemize}
\item[(1)]
For all $w$ and $w'$ in $W$
\begin{gather*}
K_w(w')=\sum _{\wh \in \Wh}
\bigg(\sum _{\gamma \in R(w,\tau \wh) }Q(\gamma)\bigg) 
\; \tau \left(\widehat{K}_{\wh}(\tau^{-1} w') \right),
\end{gather*}
where for every $\gamma \in R(w,\tau \wh) $
\begin{gather*}
Q(\gamma) = 
\begin{cases}
\begin{array}{ll}
P(\gamma)
 & \mbox{  if $\gamma$ is complete}\\
 \displaystyle P(\gamma) \frac{2\pi(w')}{\pi(w')+x_{k(\gamma)+1}}
 & \mbox{  if $\gamma$ is incomplete }.
\end{array}
\end{cases}
\end{gather*}
\item[(2)] 
$Q(\gamma)$ is the product of distinct positive roots
for all $\gamma\in R(w,\tau\widehat{w})$.
\end{itemize}

\end{theorem}

 To prove Theorem \ref{mainDnprecise}, we need to translate it into geometrical language.
 Fix a point
$$\mu^j \in \td
\ \ 
\mbox{such that } 
\mu^j_1<\cdots<\mu^j_j<0= \mu^j_{j+1} =  \cdots =\mu^j_n
\ \ 
 \mbox{for each } 0 \leq j \leq n;$$
for simplicity assume that $\mu_j^j=-1$. Let
$(\Oo_{\mu^j}, \omega_j,\psi_j)$ be the coadjoint orbit
through $\mu^j$ for each $j$.
The stabilizer of $\mu^j$ is
$$SO(2) \times \dots \times SO(2) \times SO(2n-2j)
\quad \mbox{for all } j;$$
in particular, $P_{\mu^{i}} \subseteq P_{\mu^j}$
for all $1\leq j\leq i \leq n$.
Moreover,
let $\varphi=\psi_n^{\xi} \colon \Oo_{\mu^n} \r \R$  be a generic
component of the moment map that achieves its minimum value at $\mu^n$.
Observe that, by the definition of $\mu^n$, the set $R^+$
coincides with $\{\alpha \in R \mid  (\alpha, \xi) > 0\}$.
By Proposition~\ref{cocan}, there exists a canonical
class $\alpha_p \in H_T^{2 \lambda(p)}( \Oo_{\mu^n};\Z)$ 
for each $p\in \Oo_{\mu^n}^T$.
The map from $W$  to
$\Oo_{\mu^n}^T$ given by $w \mapsto w(\mu^n)$ identifies the
canonical graph of $\Oo_{\mu^n}$ with $(W,E)$;
we shall identify these without further comment.
Moreover, by 
Proposition~\ref{theta one1}
and Lemma~\ref{llambda}, 
 $\Theta(r,r')=1$ for each edge $(r,r')$ in $E$.

Let $\pi \colon \Oo_{\mu^n} \to \Oo_{\mu^1}$ be the natural projection.
Note that  $\overline{\Sigma}(w,w')$
is exactly the set of  horizontal paths (with respect to $\pi$)
from $w$ to $w'$;
see Definition~\ref{def:horizontal}.
Moreover, 
given any $\gamma \in \overline{\Sigma}(w,w')$,
the projection $\gt = \pi(\gamma)$ is an
ascending path in the GKM graph $(\Vt,\gkmet)$ associated to 
$(\Oo_{\mu^1}, \omega_1, \psi_1)$
by Lemmas~\ref{GKMlambda} and \ref{increasing path}; 
this proves Lemma~\ref{D order vertices}. \labell{proveDov}
Note
that 
$(\Vt,\gkmet)$ is not a complete graph, because
it doesn't contain the edge $(-x_j,x_j)$ for any $j$, 
but it does contain all other edges.
Finally, observe
that $\Lt_{\pi(w')}^-$ is the equivariant
Euler class of the negative normal bundle of $\vt = \psi_1^\xi\colon \Oo_{\mu^1}\to \R$ at $\pi(w')$.

\begin{remark}\rm
By Proposition~\ref{coadjoint preserving},
the natural projection map $\rho_j \colon \Oo_{\mu^{j+1}} \to \Oo_{\mu^j}$
is a strong symplectic fibration 
with fiber the Grassmannian
$P_{\mu^j}/P_{\mu^{j+1}} \simeq Gr_2^+(\R^{2n-2j})$ 
for all $0 \leq j < n$. 
However, since
$H^{2n-2j-2}(Gr_2^+(\R^{2n-2j});\R) = \R^2$, 
we can not use Theorem~\ref{tower symp} to express the
restriction $K_w(w')$
as a sum of polynomial terms.
 \end{remark}

To prove the theorem, we need to analyze how the expression of $P(\gamma)$
changes depending on whether $\gamma$ is complete or incomplete.
\begin{proposition}\labell{precisePD}
Let $\gamma=(\gamma_1,\ldots,\gamma_{|\gamma|+1})$ be a path in $\overline{\Sigma}(p,s)$.
Let $\gt = \pi(\gamma)$, and let $SV(\gt)$ be the skipped vertices of $\gt$;
see Definition~\ref{def:sv}.
Then
$$P(\gamma) = 
\begin{cases}
\displaystyle \prod_{r\in SV(\gt)\setminus \{-\pi(s)\}}\eta(r,\pi(s)) & \mbox{if } \gamma \mbox{ is complete, and} \\
\displaystyle \frac{1}{2\pi(s)}\prod_{r\in SV(\gt)}\eta(r,\pi(s)) & \mbox{if } \gamma \mbox{ is incomplete.}
\end{cases}
$$
\end{proposition}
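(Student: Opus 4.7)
The plan is to mirror the proof of Proposition \ref{preciseP}, making careful adjustments for the fact that in type $D_n$ the GKM graph $(\Vt,\gkmet)$ of $\Oo_{\mu^1}$ is missing the edges $(-x_j,x_j)$ that are present in type $B_n$. By Proposition \ref{coadjoint preserving}, $\pi$ is a strong symplectic fibration; by Lemma \ref{GKMlambda} it is weight preserving, so $\eta(\gamma_i,\gamma_{i+1})=\eta(\pi(\gamma_i),\pi(\gamma_{i+1}))$ on every horizontal edge. By Lemma \ref{GT1}, Lemma \ref{flag index increasing}, and Lemma \ref{increasing path}, the projected path $\gt=\pi(\gamma)$ is ascending in $(\Vt,\gkmet)$. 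Since $\pt$ is the inclusion map, I can rewrite
$$P(\gamma) = \Lt_{\pi(s)}^- \left(\prod_{i=1}^{|\gamma|} \frac{\pi(\gamma_{i+1})-\pi(\gamma_i)}{\eta(\gamma_i,\gamma_{i+1})}\right) \prod_{i=1}^{|\gamma|}\frac{1}{\pi(s)-\pi(\gamma_i)}.$$

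Next, I would verify that every edge of $(\Vt,\gkmet)$ has magnitude $1$: for an edge joining $\pm x_i$ to $\pm x_j$ with $i\neq j$, a direct computation of $\eta$ and of the positive multiple relating $\pt(q)-\pt(p)$ to $\eta(p,q)$ shows that they coincide. (In $B_n$ the extra edges $(-x_j,x_j)$ have magnitude $2$, which is what produced the constants $1$ or $2$ in Proposition \ref{preciseP}; in $D_n$ those edges are simply absent.) Consequently, the middle product in the formula above equals $1$, and it remains to evaluate $\Lt_{\pi(s)}^-\prod_{i=1}^{|\gamma|}(\pi(s)-\pi(\gamma_i))^{-1}$.

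To compute $\Lt_{\pi(s)}^-$, I would use the fact that $\Vt=\{\pm x_1,\dots,\pm x_n\}$ and that the only missing edges in $(\Vt,\gkmet)$ are the pairs $(-x_j,x_j)$, so every vertex $r$ lying strictly below $\pi(s)$ in the ascending order (other than $-\pi(s)$) is joined to $\pi(s)$ by an edge, and $\eta(r,\pi(s))=\pi(s)-r$ after a sign check. Writing $B=\{r\in\Vt \mid \pt^\xi(r)<\pt^\xi(\pi(s))\}$, this yields
$$\Lt_{\pi(s)}^-=\prod_{r\in B\smallsetminus\{-\pi(s)\}}(\pi(s)-r),\qquad \prod_{i=1}^{|\gamma|}(\pi(s)-\pi(\gamma_i))=\prod_{r\in V(\gt)\smallsetminus\{\pi(s)\}}(\pi(s)-r).$$

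The last step is a case analysis on whether $-\pi(s)\in V(\gt)$, which is where the feature distinguishing $D_n$ from $B_n$ intervenes. If $\gamma$ is complete, then $-\pi(s)\notin V(\gt)$, so $V(\gt)\smallsetminus\{\pi(s)\}\subseteq B\smallsetminus\{-\pi(s)\}$, and the quotient cancels down to $\prod_{r\in SV(\gt)\smallsetminus\{-\pi(s)\}}(\pi(s)-r)$. If $\gamma$ is incomplete, then $-\pi(s)\in V(\gt)$, so the denominator carries an extra factor $\pi(s)-(-\pi(s))=2\pi(s)$ not present in the numerator; this yields the $\tfrac{1}{2\pi(s)}$ and leaves $\prod_{r\in SV(\gt)}(\pi(s)-r)$. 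Substituting $\eta(r,\pi(s))=\pi(s)-r$ in both cases gives the claimed formula. The main obstacle is purely the sign/index bookkeeping needed to verify the magnitude-$1$ identity and to keep track of whether $-\pi(s)$ belongs to $V(\gt)$, $SV(\gt)$, or $B$.
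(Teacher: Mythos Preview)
Your proposal is correct and follows the same approach as the paper: the paper's entire proof consists of the single observation that every edge of $(\Vt,\gkmet)$ has magnitude~$1$, leaving the reader to carry out exactly the cancellation you spell out. Your write-up is simply a careful expansion of that one-line argument, including the case analysis on whether $-\pi(s)\in V(\gt)$, which is where the missing edges $(-x_j,x_j)$ manifest themselves; nothing further is needed.
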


\begin{proof}
All the edges  $(r,r') \in \gkmet$  have  magnitude $1$; 
see Definition \ref{magnitude}.
Therefore, 
since $\Theta(r,r') = 1 $ for all $(r,r') \in E$, 
the claim follows from an argument similar to the proof of 
Lemma~\ref{explicit P}.
\end{proof}
We are now ready to prove Theorem \ref{mainDnprecise} 
   
\begin{proof}[Proof of Theorem \ref{mainDnprecise}]
Let $p=w(\mu^n)$ and $q=w'(\mu^n)$.
For all $s\in \widehat{\Oo}_q^T$, let $\ah_s\in H_T^*(\widehat{\Oo}_q;\Z)$
be the canonical class on the fiber $\widehat{\Oo}_q=\pi^{-1}(\pi(q))\subset \Oo_{\mu^n}$.
Since Proposition \ref{coadjoint preserving} implies
that $\pi$ is a strong symplectic fibration
and $\psi_1$ is the inclusion, 
Corollary \ref{corollary formula}, Proposition~\ref{cocan} and Lemma \ref{coGKM} together
imply that
$$
\alpha_p(q)=\sum_{s\in \widehat{\Oo}_q^T}
\Big(\sum_{\gamma\in \overline{\Sigma}(p,s)}P(\gamma)\Big)
\ah_s(q).
$$ 
If
$\gamma\in\overline{\Sigma}(p,s)$ is complete, 
then $\gamma$ is  relevant and
$P(\gamma)=Q(\gamma)$.
On the other hand, 
Lemmas~\ref{uneven1} and \ref{uneven2} still hold when $G = SO(2n)$ instead of
$SO(2n+1)$.  Indeed, the  proof is identical,  except that in the
proof of Lemma~\ref{uneven1} we no longer need to consider the case that
$(-x_l, x_l)$ is an edge of $\gt$.
Hence, as before, the set of incomplete paths can be decomposed into
pairs of paths $\gamma$ and $\gamma'$, 
so that $V(\gt') = V(\pi(\gamma'))$ is obtained from 
$V(\gt)=V(\pi(\gamma))$ by replacing $x_{k(\gamma)+1}$ 
by $-x_{k(\gamma)+1}$, where $k(\gamma) = \max\{j \mid \{-x_j, x_j\}
\subset V(\gt)\}$.
In particular,
$SV(\gt)\smallsetminus\{-x_{k(\gamma)+1}\}=
SV(\gt')\smallsetminus\{x_{k(\gamma)+1}\}$. 
Additionally, by the definition of $k(\gamma)$,
$\pi(s) \neq \pm x_{k(\gamma)+1}$, and so
$\eta( \pm x_{k(\gamma)+1},\pi(s))
= 
\pi(s) \mp x_{k(\gamma)+1}.$
Hence by Proposition \ref{precisePD} 

\begin{equation*}
\begin{split}
Q(\gamma) 
& = 
\prod_{r \in SV(\gt) \cap SV(\gt')} \eta(r,\pi(s)) 
\\
&= 
\frac{
 \eta(-x_{k(\gamma)+1},\pi(s))+
\eta(x_{k(\gamma)+1},\pi(s)
}{2\pi(s)} 
\left( 
\prod_{r \in SV(\gt) \cap SV(\gt')} \eta(r,\pi(s)) 
\right)
\\
&= 
P(\gamma)+P(\gamma')
.
\end{split}
\end{equation*}
By Proposition \ref{canonical=Schubert} and \ref{twist},
this proves part $(1)$ of Theorem \ref{mainDnprecise}.
The proof of part $(2)$ also proceeds analogously to the
proof of Theorem~\ref{mainBnprecise} (2)
in the previous subsection.
\end{proof}

\end{document}